\newtheorem{theorem}{Theorem}[section]
\newtheorem{lemma}[theorem]{Lemma}
\newtheorem{assumption}[theorem]{Assumption}
\newcommand{\RR}{\mathbf R}
\newcommand{\ba}{\begin{array}}
\newcommand{\ea}{\end{array}}
\newcommand{\red}{\color{red}}
\def\ROOTSGDV1{\textsf{ROOT\mbox{-}SGD}\xspace}
\def\RATE{\textsf{RATE}\xspace}
\newcommand{\Bb}{\mathbf{B}}
\newcommand{\gb}{\mathbf{g}}
\newcommand{\Ib}{\mathbf{I}}
\newcommand{\Mb}{\mathbf{M}}
\renewcommand{\RR}{\mathbb{R}}
\newcommand{\Exs}{\ensuremath{{\mathbb{E}}}}
\newcommand{\Quan}{\mathcal{Q}}
\newcommand{\prefactor}{\mathcal{P}}
\def\RATE{\textsf{RATE}\xspace}
\def\xholder{\mathbf{x}}
\def\yholder{\mathbf{y}}
\def\zholder{\mathbf{z}}
\def\Tholder{K}
\def\epoch{\mathsf{epoch}}
\def\Epoch{\mathsf{Epoch}}
\def\beq{\begin{equation}}
\def\eeq{\end{equation}}
\def\lambdaetaprime{\lambda^*(\eta)}
\newenvironment{custom}[1]
  {\innercustom}
  {\endinnercustom}
\def\red#1{}
\begin{document}

%


\runningauthor{C.~J.~Li, Y.~Yu, N.~Loizou, G.~Gidel, Y.~Ma, N.~Le Roux, M.~I.~Jordan}

\twocolumn[

\aistatstitle{On the Convergence of Stochastic Extragradient for Bilinear Games using Restarted Iteration Averaging}

\aistatsauthor{
Chris Junchi Li$^{\diamond, \star}$
\And
Yaodong Yu$^{\diamond, \star}$
\And
Nicolas Loizou$^{\triangleleft}$
\And
Gauthier Gidel$^{\dagger,\ddagger}$
}
\aistatsauthor{
Yi Ma$^\diamond$
\And
Nicolas Le Roux$^{\dagger,\ddagger,\S,\Box}$
\And
Michael I. Jordan$^\diamond$
}

\vspace{0.1in}


\centering{University of California, Berkeley$^\diamond$ \quad  Johns Hopkins University$^\triangleleft \quad  $Mila$^\dagger$}

\vspace{0.05in}
\centering{ Université de Montréal$^\ddagger$ \quad McGill University$^\S$ \quad Microsoft Research$^\Box$ }
\vspace{0.4in}

]

\begin{abstract}
We study the stochastic bilinear minimax optimization problem, presenting an analysis of the same-sample Stochastic ExtraGradient (SEG) method with constant step size, and presenting variations of the method that yield favorable convergence. In sharp contrasts with the basic SEG method whose last iterate only contracts to a fixed neighborhood of the Nash equilibrium, SEG augmented with iteration averaging provably converges to the Nash equilibrium under the same standard settings, and such a rate is further improved by incorporating a scheduled restarting procedure. In the interpolation setting where noise vanishes at the Nash equilibrium, we achieve an optimal convergence rate up to tight constants. We present numerical experiments that validate our theoretical findings and demonstrate the effectiveness of the SEG method when equipped with iteration averaging and restarting.
\end{abstract}

\section{INTRODUCTION}\label{sec_intro}
The \emph{minimax optimization} framework provides solution concepts useful in game theory~\citep{morgenstern1944theory}, statistics~\citep{bacheta} and online learning~\citep{blackwell1956analog,cesa2006prediction}.
It has recently been prominent in the deep learning community due to its application to generative modeling~\citep{goodfellow2014generative, arjovsky2017wasserstein} and robust prediction~\citep{madry2017towards, zhang2019theoretically}.
There remains, however, a gap between minimax characterizations of solutions and algorithmic frameworks that provably converge to such solutions in practice.

In standard single-objective machine learning applications, the traditional algorithmic realization of optimization frameworks is stochastic gradient descent (SGD, or one of its variants), where the full gradient is formulated as an expectation over the data-generating mechanism.
In general minimax optimization problems, however, naive use of SGD leads to pathological behavior due to the presence of rotational dynamics~\citep{goodfellow2016nips,balduzzi2018mechanics}.

One way to overcome these rotations is to use gradient-based methods specifically designed for the minimax setting (or more generally for the multi-player game setting).
A key example of such a method is the celebrated \emph{extragradient method}.
Originally introduced by~\citep{g.m.korpelevichExtragradientMethodFinding1976}, it addresses general minimax optimization problems and yields optimal convergence guarantees in the batch setting~\citep{azizian2020accelerating}. 
In the stochastic setting, however, it has only been analyzed in special cases, such as the constrained case \citep{juditsky2011solving}, the bounded-noise case \citep{hsieh2020explore}, and the interpolatory case \citep{vaswani2019painless}.
In the current paper, we study the general stochastic bilinear minimax optimization problem, also known as the bilinear saddle-point problem,
\beq\label{Sminimax}
\min_\xholder \max_\yholder~
\xholder^\top \Exs_\xi[\Bb_{\xi}] \yholder 
+ 
\xholder^\top \Exs_\xi[\gb^\xholder_\xi] 
+ 
\Exs_\xi[(\gb^\yholder_\xi)^\top] \yholder \, ,
\eeq
where the index $\xi$ denotes the randomness associated with stochastic sampling. 
Following standard practice we assume that the expected coupling matrix $\Bb = \Exs[\Bb_\xi]$ is nonsingular, and that the intercept vectors $\gb^\xholder_\xi$ and $\gb^\yholder_\xi$ have zero mean:
$\Exs[\gb^\xholder_\xi] = \mathbf{0}_n$ and $\Exs[\gb^\yholder_\xi] = \mathbf{0}_m$.
Thus the Nash equilibrium point is $[\xholder^*; \yholder^*] = [\mathbf{0}_n; \mathbf{0}_m]$.
Such assumptions are standard in the literature on bilinear optimization~\citep[see, e.g., ][]{vaswani2019painless,mishchenko2020revisiting}.%
\footnote{In the case of a square, nonsingular coupling matrix $\Bb$ this assumption is feasible without loss of generality, while in the rectangular matrix case we simply restrict ourselves to the nonsingular $\min(n,m)$-dimensional subspace induced by singular value decomposition. The nonzero component of the intercept vectors $[\gb^\xholder_\xi; \gb^\yholder_\xi]$ projected onto such a subspace is not taken into account in the SEG dynamics.
}

In this work, we present theoretical results in the general setting of bilinear minimax games for a version of the Stochastic ExtraGradient (SEG) method that incorporates iteration averaging and scheduled restarting. 
The introduction of stochasticity in the matrix $\Bb_{\xi}$ together with an unbounded domain presents technical challenges that have been a major stumbling block in earlier work~\citep[cf.][]{dieuleveut2016harder}.
Here we show how to surmount these challenges.
Formally, we introduce the following SEG method composed of an extrapolation step (half-iterates) and an update step:
\beq\label{SEGupdate}\begin{aligned}
\xholder_{t-1/2}	&=	\xholder_{t-1} - \eta_t\left[ \Bb_{\xi, t} \yholder_{t-1} + \gb^\xholder_{\xi,t}\right],
\\
\yholder_{t-1/2}	&=	\yholder_{t-1} + \eta_t\left[ \Bb_{\xi, t}^\top \xholder_{t-1} + \gb^\yholder_{\xi,t}\right],
\\ 
\xholder_t          &=	\xholder_{t-1} - \eta_t\left[ \Bb_{\xi, t} \yholder_{t-1/2} + \gb^\xholder_{\xi,t}\right],
\\
\yholder_t  	    &=	\yholder_{t-1} + \eta_t\left[ \Bb_{\xi, t}^\top \xholder_{t-1/2} + \gb^\yholder_{\xi,t}\right].
\end{aligned}
\eeq
Here and throughout we adopt a \textbf{\emph{same-sample-and-step-size}} notation in which the extrapolation and extragradient steps share the same stochastic sample \citep{gidel2019variational,mishchenko2020revisiting} and step size $\eta_t$; i.e., the updates in~Eq.~\eqref{SEGupdate} use the same samples of $\Bb_{\xi}$, $\gb^\xholder_\xi$ and $\gb^\yholder_\xi$.
Note that there exist counterexamples~\citep[see, e.g.,][Theorem 1]{chavdarova2019reducing} where the SEG iteration~\citep{juditsky2011solving} persistently diverges when using independent samples. 
The same-sample stochastic extra gradient (SEG) method aims to address this issue~\citep{gidel2019variational,mishchenko2020revisiting}.
In practice, for the bilinear game problems we consider in this paper as well as other application problems, including generative adversarial networks and adversarial training, it is easy to perform the same-sample SEG updates: in most machine learning applications one can re-use a sample without significant extra cost.

\smallskip\noindent\textbf{Main contributions.}
We provide an in-depth study of SEG on bilinear games and we show that, unlike in the minimization-only setting, in the minimax optimization setting the last-iterate SEG algorithm with the same sample and step sizes \emph{cannot} converge in general even when the step sizes are diminishing to zero [Theorems \ref{theo_SEG_A} and \ref{theo_SEG_lower_bound}].
This motivates our study of averaging and restarting in order to obtain meaningful convergence rates:

\begin{enumerate}[topsep=0pt,parsep=0pt,partopsep=0pt, leftmargin=*,label=(\roman*)]
\item
We prove that in the bilinear game setting, under mild assumptions, iteration averaging allows SEG to converge at the rate of $1/\sqrt{\Tholder}$ [Theorem \ref{theo_SEG_B}], $\Tholder$ being the number of samples the algorithm has processed.  
This rate is statistically optimal up to a constant multiplier.
Additionally, we can further boost the convergence rate when we combine iteration averaging with scheduled restarting [Theorem \ref{theo_SEG_C}] when the lower bound of the smallest eigenvalue in the coupling matrix is known to the system.
In this case, exponential forgetting of the initialization and an optimal statistical rate are achieved.

\item
In the special case of the interpolation setting, we are able to show that SEG with iteration averaging and scheduled restarting achieves an accelerated rate of convergence, faster than (last-iterate) SEG [Theorem \ref{theo_SEGg_interpolation_C}], reducing the dependence of the rate on the condition number to a dependence on its square root. 
We achieve state-of-the-art rates comparable to the full batch optimal rate~\citep{azizian2020accelerating}, with access only to a stochastic estimate of the gradient, improving upon~\citet{vaswani2019painless}.

\item
We provide the first convergence result on SEG with unbounded noise.
The only existing result of which we are aware of for the unbounded noise setting is the work of \citet{vaswani2019painless} in the interpolation setting.
Our theoretical results are further validated by experiments on synthetic data.

\end{enumerate}

\subsection{Related Work}
\noindent\textbf{Bilinear minimax optimization.} The study of the bilinear example as a tool to understand minimax optimization originated with~\citet{daskalakis2018training}, who studied an optimistic gradient descent-ascent (OGDA) algorithm to solve that minimax problem.
They were able to prove sublinear convergence for this method.
Later,~\citet{mokhtari2020unified} proposed to analyze OGDA and the related ExtraGradient (EG) method as perturbations of the Proximal Point (PP) method.
They were able to prove a linear convergence rate for both EG and OGDA with an iteration complexity of $O(\kappa\log (1/\epsilon))$, where $\kappa \equiv \lambda_{\max}(\Bb^\top \Bb)/\lambda_{\min}(\Bb\Bb^\top)$ is the condition number of problem Eq.~\eqref{Sminimax}.
Highly related to the current work is that of \citet{gidel2019variational}, who studied the bilinear case and proved an $O(\kappa\log (1/\epsilon))$ iteration complexity for EG with a better constant than~\citet{mokhtari2020unified}.
  \citet{wei2021linear} studied Optimistic Multiplicative Weights Update (OMWU) for solving constrained bilinear games and established the  linear last-iterate convergence.

Regarding optimal methods, a combination of \citet{ibrahim2020linear} and \citet{zhang2019lower} established a general lower bound, which specializes to a lower bound of $\Omega(\sqrt{\kappa} \log (1/\epsilon))$ for the case of the bilinear minimax game setting.
\citet{azizian2020accelerating} proved linear convergence results for a series of algorithms that achieve this lower bound and also provided an alternative proof for this lower bound by using spectral arguments.
However,~\citet{azizian2020accelerating} did not provide accelerated rates for OGDA and provided an accelerated rate for EG with momentum but with an unknown constant.
In this work, we completely close that gap by providing accelerated convergence rates for (stochastic) EG with relatively tight constants.
In another work, \citet{azizian2020tight} proved a full-regime result for EG without momentum where they show that the $O(\kappa\log (1/\epsilon))$ iteration complexity for EG  is optimal among the methods using a fixed number of composed gradient evaluations and only the last iterate (excluding momentum and restarting).
A similar iteration complexity, (with an unknown constant) can be derived from the seminal work by~\citet{tseng1995linear} on EG.

\noindent\textbf{Stochastic bilinear minimax and variational inequalities.} The standard assumptions made in the literature on stochastic variational inequalities~\citep{nemirovski2009robust,juditsky2011solving} is that the set of parameters and the variance of the stochastic estimate of the vector field are bounded.
These two assumptions do not hold in the stochastic bilinear case, because it is unconstrained and the noise increases with the norm of the parameters.
Recently, \citet{hsieh2020explore} provided results on stochastic EG with different step sizes, without the bounded domain assumption but still requiring the bounded noise assumption. 
{\citet{iusem2017extragradient} and \citet{bot2019forward} studied the independent-sample, minibatch setting where the summation of inverse batchsize converges.} 
\citet{mishchenko2020revisiting} discussed how using the same mini-batch for the two gradients in stochastic EG gives stronger guarantees.
Using a Hamiltonian viewpoint, \citet{loizou2020stochastic} provided the first set of global non-asymptotic last-iterate convergence guarantees for a stochastic game over a non-compact domain, in the absence of strong monotonicity assumptions.
In particular, their stochastic Hamiltonian gradient methods come with last-iterate convergence guarantees in the finite-sum stochastic bilinear game as well.
In our work, we provide an accelerated convergence rate for EG in the bilinear setting with unbounded domain and unbounded noise.

\noindent\textbf{Restarting and acceleration.} {Restarting has long been introduced as an effective approach to accelerate first-order methods in the optimization literature~\citep{o2015adaptive, roulet2020sharpness, renegar2021simple}. In particular, \citet{o2015adaptive} proposed an adaptive restarting technique that significantly improves the convergence rate of Nesterov's accelerated gradient descent method. \citet{roulet2020sharpness} developed optimal restarting methods for solving convex optimization problems that satisfy the sharpness assumption. \citet{renegar2021simple} considered a more general set of problems than \citet{roulet2020sharpness} and presented a simple and near-optimal restarting scheme.
Our variant restarting achieves acceleration via a fundamentally different idea that is inspired by modern variance-reduction ideas.
}

\noindent\textbf{Averaging in convex-concave games.}
\cite{golowich2020last} studied the effect of averaging for EG in the smooth convex-concave setting.
They showed that the last iterate converges at a rate of $O(1/\sqrt{\Tholder})$ in terms of the square root of the Hamiltonian (and also the duality gap), while it is known that iteration averaging enjoys an $O(1/\Tholder)$ rate \citep{nemirovski2004prox}.
A tight lower bound was also proved to justify an assertion of optimality in the last-iterate setting.
Such a result provides a convincing argument in favor of restarting the algorithm from an average of the iterates.  This is a theme that we pursue in the current paper.

\noindent\textbf{Stability of limit points in minimax games.}
GDA dynamics often encounter limit cycles or non-Nash stable limiting points~\citep{daskalakis2018limit,adolphs2019local,berard2019closer,mazumdar2019finding}.
To mitigate this, \citet{adolphs2019local} and \citet{mazumdar2019finding} proposed to exploit the curvature associated with the stable limit points that are not Nash equilibria.  While appealing theoretically, such methods generally involve costly inversion of Jacobian matrices at each step.

\noindent\textbf{Over-parameterized models and interpolation.} Recently it was shown that popular stochastic gradient methods, like SGD and its momentum variants, converge considerably faster when the underlying model is sufficiently over-parameterized as to interpolate the data \citep{gower2019sgd,gower2021sgd,loizou2020momentum,vaswani2019fast, loizou2021stochastic, sebbouh2020convergence}.
In the minimax optimization setting, an analysis that also covers the interpolation regime is rare.
To the best of our knowledge the only paper that provides convergence guarantees for SEG in this setting is \cite{vaswani2019painless}, where SEG with line search is proposed and analyzed.
In our work we provide convergence guarantees in the interpolation regime as corollaries of our main theorems but with a tight $1/e$-prefactor in the linear convergence.

\noindent\textbf{Organization.}
The remainder of this paper is organized as follows.
\S\ref{sec_assumptions} details the basic setup and assumptions for our main results.
\S\ref{sec_SEGg} presents our convergence results for SEG with averaging and restarting.
\S\ref{sec_experiment} provides experiments that validate our theory.
\S\ref{sec_conclusions} concludes this paper with future directions.
All technical analyses along with auxiliary results are relegated to later sections in the supplementary materials.

\noindent\textbf{Notation.}
Throughout this paper we use the following notation.
For two real symmetric matrices, $\Bb_1,\Bb_2$, we denote $\Bb_1\preceq \Bb_2$ when $\mathbf{v}^\top\Bb_1\mathbf{v} \le \mathbf{v}^\top\Bb_2 \mathbf{v}$ holds for all vectors $\mathbf{v}$.
Let $\lambda_{\max}(\Bb)$~(resp.~$\lambda_{\min}(\Bb)$) be the largest (resp.~smallest) eigenvalue of a generic (real symmetric) matrix $\Bb$.
Let $\|\Bb\|_{op}$ denotes the operator norm of $\Bb$.
Let $\mathcal{F}_t$ be the filtration generated by the stochastic samples, $\Bb_{\xi,s}, \gb_{\xi,s}$, $s=1,\dots,t$, in the bilinear game.
Let $\max(a,b)$ or $a\lor b$  denote the maximum value of $a,b\in \RR$, and let $\min(a;b)$ or $a\land b$ denote the minimum.
For two real sequences, $(a_n)$ and $(b_n)$, we write $a_n = O(b_n)$ to mean that $|a_n|\le C b_n$ for a positive, numerical constant $C$, for all $n\ge 1$, and let $a_n = \tilde{O}(b_n)$ mean that $|a_n|\le C b_n$ where $C$ hides a logarithmic factor in relevant parameters.
We also denote $
\widehat{\Mb}_\xi	\equiv	\Bb_\xi^\top \Bb_\xi
$ and $
\Mb_\xi			\equiv	\Bb_\xi\Bb_\xi^\top
$ for brevity, each being positive semi-definite for each realization of $\xi$.
Finally, let $[n] = \{1,\dots,n\}$ for $n$ being a natural number.

\section{SETUP FOR MAIN RESULTS}\label{sec_assumptions}
In this section, we introduce the basic setup and assumptions needed for our statement of the convergence of the stochastic extragradient (SEG) algorithm.
We first make the following assumptions on  $\Bb_\xi$. Let us recall that 
$\widehat{\Mb} \equiv \Exs_\xi\widehat{\Mb}_{\xi} \equiv \Exs_\xi[\Bb_{\xi}^\top \Bb_{\xi}]
$ and $
\Mb \equiv \Exs_\xi\Mb_{\xi} \equiv \Exs_\xi[\Bb_{\xi} \Bb_{\xi}^\top]
$.

\begin{assumption}[Assumption on $\Bb_\xi$]\label{assu_boundednoise_A}
Denote $\Bb = \Exs_\xi[\Bb_{\xi}]$ for $\Bb\in \RR^{n\times m}$ and impose the following regularity conditions:
$
\lambda_{\max}(\Bb^\top\Bb) > 0$
and 
$
\lambda_{\min}(\Mb) \land \lambda_{\min}(\widehat{\Mb}) > 0
$.
We assume that there exist $\sigma_{\Bb}, \sigma_{\Bb,2}\in [0,\infty)$ such that
\beq\label{sigmaAsq}
\begin{aligned}
\| \Exs_\xi[(\Bb_{\xi} - \Bb)^\top (\Bb_{\xi} - \Bb)] \|_{op}
\le
\sigma_{\Bb}^2,
\\
\| \Exs_\xi\left[(\Bb_{\xi} - \Bb) (\Bb_{\xi} - \Bb)^\top\right] \|_{op}
\le
\sigma_{\Bb}^2,
\end{aligned}
\eeq
and
\beq\label{sigmaA2sqinit}
\begin{aligned}
\| \Exs_\xi[\Bb_{\xi}^\top \Bb_{\xi} - \widehat{\Mb}]^2 \|_{op}
\le
\sigma_{\Bb,2}^2,
\\
\| \Exs_\xi[\Bb_{\xi}\Bb_{\xi}^\top - \Mb]^2 \|_{op}
\le
\sigma_{\Bb,2}^2.
\end{aligned}
\eeq
\end{assumption}
The assumption of $n\ge m$ (i.e.~$\Bb$ is tall) is without loss of generality;
we can convert the SEG iterates with a wide coupling matrix to that of its transpose.
Note also $\sigma_{\Bb} = 0$ corresponds to the nonrandom $\Bb_{\xi} = \Bb$ case.
The stochasticity introduced in $\Bb_\xi$ allows us to conclude the first convergence result under the unbounded noise condition.%
\footnote{As a comparison, \citet{hsieh2020explore} only provides a proof for the bounded noise case.}
Next we impose an assumption on the intercept vector $\gb_\xi$.
\begin{assumption}[Assumption on $\gb_\xi$]\label{assu_boundednoise_B}
There exists a $\sigma_{\gb}\in [0,\infty)$ such that
$$
\Exs_\xi\left[ \|\gb_{\xi}^\xholder\|^2 + \|\gb_{\xi}^\yholder\|^2\right]
\le
\sigma_{\gb}^2
< 
\infty
.
$$
Furthermore, we let $\Exs_\xi[\gb^\xholder_\xi] = \mathbf{0}_n$, $\Exs_\xi[\gb^\yholder_\xi] = \mathbf{0}_m$ and assume independence between the stochastic matrix $\Bb_{\xi}$ and the vector  $[\gb^\xholder_\xi; \gb^\yholder_\xi]$.
\end{assumption}
We remark that the independence assumption in Assumption \ref{assu_boundednoise_B} significantly simplifies our analysis.%
\footnote{In practice, such independence can be \textit{approximately} achieved via the following decoupling argument: we formulate the random Jacobian-vector product and the random intercept using two independent random samples, separately.
Note an approximate knowledge of the Nash equilibrium is required in this decoupling argument.
}
In particular, it ensures
$
\Exs[\Bb_{\xi}\gb^\yholder_\xi] = \mathbf{0}_n
$ and $
\Exs[\Bb_{\xi}^\top\gb^\xholder_\xi] = \mathbf{0}_m
$, so the Nash equilibrium is the equilibrium point that the last-iterate SEG oscillates around.
The independence structure of $\Bb_\xi$ and $[\gb^\xholder_\xi; \gb^\yholder_\xi]$ in Assumption~\ref{assu_boundednoise_B} is crucial for our analysis, which is satisfied in certain statistical models.
Specially, when one of the $\Bb_{\xi}$ and $[\gb^\xholder_\xi; \gb^\yholder_\xi]$ is nonrandom this is always satisfied.
Our analysis can be further generalized to more relaxed assumptions on  zero correlation between $[\gb^\xholder_\xi; \gb^\yholder_\xi]$ and the first three moments of $\Bb_{\xi}$, with a second-moment condition similar to $
\Exs_\xi[
\|\Bb_{\xi} \gb_{\xi}^\yholder\|^2
+
\|\Bb_{\xi}^\top \gb_{\xi}^\xholder\|^2
]
	\le
C(\lambda_{\max}(\Mb) \lor \lambda_{\max}(\widehat{\Mb}))\sigma_{\gb}^2$.
We defer the full development of this extension to future work.
With Assumptions~\ref{assu_boundednoise_A} and \ref{assu_boundednoise_B} at hand, we are ready to state our main results on the convergence of SEG variants.

\section{SEG WITH AVERAGING AND RESTARTING}\label{sec_SEGg}
Recall that in contrast to SGD theory in convex optimization, the last iterate of SEG does \emph{not} converge to an arbitrarily small neighborhood of the Nash equilibrium even for the case of a converging step size~\citep{hsieh2020explore}.
We accordingly turn to an analysis of the \emph{averaged iterate} of $\xholder_t$ and $\yholder_t$, $t=0,1,\dots,\Tholder$, denoted as
\beq\label{averaged_iterate}\begin{aligned}
\overline{\xholder}_{\Tholder}
	\equiv
\frac{1}{\Tholder+1} \sum_{t=0}^\Tholder \xholder_t
,
\quad
\overline{\yholder}_{\Tholder}
	\equiv
\frac{1}{\Tholder+1} \sum_{t=0}^\Tholder \yholder_t
.
\end{aligned}\eeq
For simplicity we focus on the case in which $\Bb_{\xi}, \Bb$ are square matrices. 
Let us define $\eta_{\Mb}$ as follows, which is the maximal step size that the SEG algorithm analysis takes:
\beq\label{etaMdef}
\eta_{\Mb}
\equiv
\frac{1}{\sqrt{\rho_{1}\lor\rho_{2}
}},
\eeq
where $\rho_{1} = \lambda_{\max}\big(\Mb^{-1/2} [\Exs_\xi \Mb_\xi^2] \Mb^{-1/2}\big)$ and $\rho_{2} = \lambda_{\max}\big(\widehat{\Mb}^{-1/2} [\Exs_\xi \widehat{\Mb}_\xi^2] \widehat{\Mb}^{-1/2}\big)$.
We introduce the following variants:
\beq\label{eta_choice}\begin{aligned}
\hat\eta_{\Mb}(\alpha)
&\equiv
\frac{\eta_{\Mb}}{\sqrt{2}}
\land
\frac{\alpha\lambda_{\min}(\Bb\Bb^\top)}{2\sigma_{\Bb}^2 \sqrt{\lambda_{\max}(\Bb^\top\Bb)}}
,
\\
\bar\eta_{\Mb}(\alpha)
&\equiv
\eta_{\Mb}
\land
\frac{\alpha\lambda_{\min}(\Bb\Bb^\top)}{2\sigma_{\Bb}^2 \sqrt{\lambda_{\max}(\Bb^\top\Bb)}}
,
\end{aligned}\eeq
which reduce to $1/\sqrt{2\lambda_{\max}(\Bb^\top\Bb)}$ and $1/\sqrt{\lambda_{\max}(\Bb^\top\Bb)}$ when $\Bb_\xi$ is nonrandom.
We state our first main result on SEG with iteration averaging, Theorem \ref{theo_SEG_B}, whose proof is provided in \S\ref{sec_proof,theo_SEG_B}:

\begin{theorem}[SEG Averaged Iterate]\label{theo_SEG_B}
Let Assumptions~\ref{assu_boundednoise_A} and \ref{assu_boundednoise_B} hold with $n=m$.
Prescribing an $\alpha\in (0,1)$, when the step size $\eta$ is chosen as $\hat\eta_{\Mb}(\alpha)$ as defined in Eq.~\eqref{eta_choice}, we have for all $\Tholder\ge 1$ the following convergence bound for the averaged iterate:
\beq\label{Asingleloop_SEG_B}
\begin{aligned}
&\Exs\big[
\|\overline{\xholder}_{\Tholder}\|^2 + 
\|\overline{\yholder}_{\Tholder}\|^2
\big]
\\
&\le\,\,\tau_{1}
\cdot
\frac{\|\xholder_0\|^2 + \|\yholder_0\|^2}{\color{red}(\Tholder+1)^2}
+\tau_{2}
\cdot
\frac{\sigma_{\gb}^2}{\color{red}\Tholder+1},
\end{aligned}
\eeq
where $\tau_1, \tau_2$ depending on $\sigma_{\Bb}, \sigma_{\Bb,2}$ are defined as
$$\begin{aligned}
\tau_1
&=
\frac{16 + 8\kappa_\zeta}{(1-\alpha)\hat\eta_{\Mb}(\alpha)^2\lambda_{\min}(\Bb\Bb^\top)}
,
\\
\tau_2
&=
\frac{18 + 12\kappa_\zeta}{(1-\alpha)\lambda_{\min}(\Bb\Bb^\top)}
,
\end{aligned}$$
and $
\kappa_\zeta
    \equiv
\frac{\sigma_{\Bb}^2 + \hat\eta_{\Mb}(\alpha)^2\sigma_{\Bb,2}^2}{\lambda_{\min}(\Mb)\land\lambda_{\min}(\widehat{\Mb})}
$ denotes the effective noise condition number of problem Eq.~\eqref{Sminimax}.
\end{theorem}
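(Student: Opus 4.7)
The plan is to turn the SEG extrapolation into a telescoping identity for the averaged iterate, with one auxiliary ingredient: a uniform-in-$t$ second-moment bound on the raw iterates. For the latter, I would first collapse the half-step into the full step to obtain the affine recursion
$\xholder_t = (\Ib - \eta^2 \Bb_{\xi,t}\Bb_{\xi,t}^\top)\xholder_{t-1} - \eta\,\Bb_{\xi,t}\yholder_{t-1} - \eta\,\gb^{\xholder}_{\xi,t} - \eta^2\,\Bb_{\xi,t}\gb^{\yholder}_{\xi,t}$
and its symmetric counterpart for $\yholder_t$. Squaring, adding the two, and taking conditional expectations with respect to $\mathcal{F}_{t-1}$, the skew-symmetric cross terms between $\xholder_{t-1}$ and $\yholder_{t-1}$ largely cancel (the structural reason EG works here); Assumption~\ref{assu_boundednoise_A} bounds the new $\eta^4$-order terms through the spectral quantity $\eta_{\Mb}$, and the step-size choice $\eta=\hat\eta_{\Mb}(\alpha)$ is calibrated to leave an $\alpha$-slack contraction. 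Iterating the resulting one-step affine inequality yields a uniform bound $\Exs[\|\xholder_t\|^2+\|\yholder_t\|^2] \le C_1\bigl(\|\xholder_0\|^2+\|\yholder_0\|^2\bigr) + C_2\sigma_{\gb}^2$ for all $t\le \Tholder$, with constants controlled by $\kappa_\zeta$ and $(1-\alpha)^{-1}$.

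The heart of the argument is the telescoping identity obtained by directly summing the SEG update for $\xholder$:
$\xholder_\Tholder - \xholder_0 = -\eta\sum_{t=1}^{\Tholder}\Bb_{\xi,t}\yholder_{t-1/2} - \eta\sum_{t=1}^{\Tholder}\gb^{\xholder}_{\xi,t}$,
and the analogous identity for $\yholder$. Writing $\Bb_{\xi,t}=\Bb + (\Bb_{\xi,t}-\Bb)$, dividing by $\eta(\Tholder+1)$, squaring, and applying $\|\Bb\mathbf{v}\|^2\ge \lambda_{\min}(\Bb\Bb^\top)\|\mathbf{v}\|^2$ on the left-hand side bounds $\lambda_{\min}(\Bb\Bb^\top)\,\Exs\bigl\|\tfrac{1}{\Tholder+1}\sum_{t=1}^{\Tholder}\yholder_{t-1/2}\bigr\|^2$ by three pieces: (i) the initialization term $\Exs\|\xholder_0-\xholder_\Tholder\|^2 /(\eta^2(\Tholder+1)^2)$, which is $O(1/(\Tholder+1)^2)$ by the step-one uniform bound; (ii) the noise average $\Exs\bigl\|\tfrac{1}{\Tholder+1}\sum\gb^{\xholder}_{\xi,t}\bigr\|^2 = \sigma_{\gb}^2/(\Tholder+1)$ by zero-mean independence (Assumption~\ref{assu_boundednoise_B}); and (iii) the stochastic-coupling residual $\Exs\bigl\|\tfrac{1}{\Tholder+1}\sum(\Bb_{\xi,t}-\Bb)\yholder_{t-1/2}\bigr\|^2$, which, being a martingale sum by independence of $\Bb_{\xi,t}$ from $\mathcal{F}_{t-1}$, is controlled in square expectation by $\sigma_{\Bb}^2\cdot\tfrac{1}{(\Tholder+1)^2}\sum_t\Exs\|\yholder_{t-1/2}\|^2 = O(1/(\Tholder+1))$ via step one.

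Finally, I would translate $\tfrac{1}{\Tholder+1}\sum\yholder_{t-1/2}$ back to the averaged iterate $\overline{\yholder}_{\Tholder}$ via the small correction $\yholder_t - \yholder_{t-1/2} = \eta\,\Bb_{\xi,t}^\top(\xholder_{t-1/2}-\xholder_{t-1})$, whose squared average is of the same $O(1/(\Tholder+1))$ order already absorbed. Repeating the symmetric argument for $\overline{\xholder}_{\Tholder}$ and summing yields Eq.~\eqref{Asingleloop_SEG_B}, with the $1/(\hat\eta_{\Mb}(\alpha)^2\lambda_{\min}(\Bb\Bb^\top))$ factor in $\tau_1$ coming from the initialization denominator, the $1/\lambda_{\min}(\Bb\Bb^\top)$ factor in $\tau_2$ from the noise pieces, and the $\kappa_\zeta,(1-\alpha)^{-1}$ factors entering through the step-one uniform bound and the stochastic-coupling residual. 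The main obstacle is precisely this last residual: the martingale-difference treatment of $\sum(\Bb_{\xi,t}-\Bb)\yholder_{t-1/2}$ feeds on $\Exs\|\yholder_{t-1/2}\|^2$, while the uniform second-moment bound in step one itself requires that the noise amplification from $\Bb_{\xi,t}$ not overwhelm the contraction — the slack $\alpha\in(0,1)$ built into $\hat\eta_{\Mb}(\alpha)$ is what breaks this loop and yields the clean $\kappa_\zeta/(1-\alpha)$ dependence in the final constants.
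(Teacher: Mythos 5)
Your overall skeleton — a uniform last-iterate second-moment bound, telescoping the update, splitting off centered stochastic residuals, and converting back through $\lambda_{\min}(\Bb\Bb^\top)$ — is the same as the paper's. But there are two genuine gaps, both traceable to the same-sample structure. First, your residual $\tfrac{1}{\Tholder+1}\sum_t(\Bb_{\xi,t}-\Bb)\yholder_{t-1/2}$ is \emph{not} a martingale-difference sum: $\yholder_{t-1/2}=\yholder_{t-1}+\eta[\Bb_{\xi,t}^\top\xholder_{t-1}+\gb^\yholder_{\xi,t}]$ depends on the same sample $\xi_t$, so
$$
\Exs\bigl[(\Bb_{\xi,t}-\Bb)\yholder_{t-1/2}\mid\mathcal{F}_{t-1}\bigr]
=\eta\,\Exs\bigl[(\Bb_{\xi}-\Bb)(\Bb_{\xi}-\Bb)^\top\bigr]\xholder_{t-1}
=\eta\,(\Mb-\Bb\Bb^\top)\,\xholder_{t-1}\neq 0 .
$$
Averaged over $t$, this bias equals $\eta(\Mb-\Bb\Bb^\top)$ applied to the averaged iterate itself — a first-order drift of magnitude $\eta\sigma_{\Bb}^2\|\overline{\xholder}\|$ that does not decay in $\Tholder$ and cannot be folded into an $O(1/(\Tholder+1))$ variance term. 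The same issue recurs in your final conversion from $\tfrac{1}{\Tholder+1}\sum\yholder_{t-1/2}$ to $\overline{\yholder}_\Tholder$: the correction contains the systematic piece $\eta^2\widehat{\Mb}\,\overline{\yholder}$, again of the same order as the target quantity rather than a decaying remainder. The paper avoids the first problem by telescoping the \emph{combined} update, so that the centered factors $(\Bb_{\xi,t}-\Bb)\yholder_{t-1}$ and $(\Bb_{\xi,t}\Bb_{\xi,t}^\top-\Mb)\xholder_{t-1}$ multiply $\mathcal{F}_{t-1}$-measurable iterates and genuinely form $L^2$ martingale differences.

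Second, once these drifts are kept, the deterministic part of the telescoped system is the coupled pair $(\Bb\overline{\yholder}+\eta\Mb\overline{\xholder},\ \Bb^\top\overline{\xholder}-\eta\widehat{\Mb}\overline{\yholder})$, and recovering $\|\overline{\xholder}\|^2+\|\overline{\yholder}\|^2$ requires the joint spectral lower bound of Lemma~\ref{lemm_normconvert},
$$
\bigl\|\Bb\overline{\yholder}+\eta\Mb\overline{\xholder}\bigr\|^2+\bigl\|\Bb^\top\overline{\xholder}-\eta\widehat{\Mb}\overline{\yholder}\bigr\|^2
\ \ge\ \Bigl(\lambda_{\min}(\Bb\Bb^\top)\bigl(1+\eta^2\lambda_{\min}(\Bb\Bb^\top)\bigr)-2\eta\sigma_{\Bb}^2\sqrt{\lambda_{\max}(\Bb^\top\Bb)}\Bigr)\bigl[\|\overline{\xholder}\|^2+\|\overline{\yholder}\|^2\bigr],
$$
whose cross terms only cancel after a careful estimate using Assumption~\ref{assu_boundednoise_A}. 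This is where the cap $\hat\eta_{\Mb}(\alpha)\le\alpha\lambda_{\min}(\Bb\Bb^\top)/(2\sigma_{\Bb}^2\sqrt{\lambda_{\max}(\Bb^\top\Bb)})$ and hence the $(1-\alpha)$ prefactor actually enter — not, as you suggest, through the uniform second-moment bound, which in the paper needs only $\eta\le\eta_{\Mb}/\sqrt{2}$ and is $\alpha$-free. Your one-sided application of $\|\Bb\mathbf{v}\|^2\ge\lambda_{\min}(\Bb\Bb^\top)\|\mathbf{v}\|^2$ to a single component skips this lemma entirely, and without it (or an equivalent absorption argument for the $\eta$-order drifts) the proof does not close.
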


Measured by the Euclidean metric, Theorem \ref{theo_SEG_B} indicates an $O(1/\sqrt{\Tholder})$ leading-order convergence rate for the averaged iterate of SEG in the general stochastic setting, which is known to be statistically optimal up to a constant multiplier.
We provide detailed comparisons with previous related work in \S\ref{sec_comp,theo_SEG_B}.
Nevertheless, the iteration slowly forgets initial conditions at a polynomial rate, and this result can be improved if we utilize a restarting scheme and take advantage of the knowledge of the smallest eigenvalue of $\Bb\Bb^\top$.
Indeed, in the following result, we boost the convergence rate shown in Eq.~\eqref{Asingleloop_SEG_B}, when the smallest eigenvalue $\lambda_{\min}(\Bb\Bb^\top)$ is available to the system, via a novel  restarting procedure at specific times.
The rationale behind this analysis is akin to that used in boosting sublinear convergence in convex optimization to  linear convergence when the designer has (an estimate of) the strong convexity parameter.

We now develop this argument in detail. 
We continue to assume the case of square matrices $\Bb_\xi,\Bb$.
In Algorithm \ref{algo_iasgd_restart} we run SEG with averaging and restart the iteration at chosen timestamps, $\{\mathcal{T}_i\}_{i\in [\Epoch-1]}\subseteq [\Tholder]$, initializing at the averaged iterate of the previous epoch.
The principle behind our choice of parameters in this algorithm is that we trigger the restarting when the expected squared Euclidean metric $\Exs\left[ \|\xholder_{\Tholder}\|^2 + \|\yholder_{\Tholder}\|^2 \right]$ decreases by a factor of $1/e^2$,
and we halt the restarting procedure once the last iterate reaches stationarity in squared Euclidean metric in the sense of Theorem \ref{theo_SEG_A}:\footnote{
The choice of the discount factor $1/e^2$ is to be consistent with our optimal choice in the interpolation setting, where in the $\sigma_{\Bb} = 0$ case the total complexity is minimized to $e\sqrt{\lambda_{\max}(\Bb^\top\Bb) / \lambda_{\min}(\Bb\Bb^\top)}$.
}
$$
\|\xholder_0\|^2 + \|\yholder_0\|^2
	\approx
\frac{3\sigma_{\gb}^2}{\lambda_{\min}(\Mb)\land\lambda_{\min}(\widehat{\Mb})}.
$$
Given these choices, summarized in Algorithm \ref{algo_iasgd_restart}, we obtain the following theorem:

\begin{algorithm}[!t]
\caption{Iteration Averaged SEG with Scheduled Restarting}
\begin{algorithmic}[1]
\REQUIRE Initialization $\xholder_0$, step sizes $\eta_t$, total number of iterates $\Tholder$, restarting timestamps $\{\mathcal{T}_i\}_{i\in [\Epoch-1]}\subseteq [\Tholder]$ with the total number of epoches $\Epoch\ge 1$, index $s\leftarrow 0$
\FOR{$t=1, 2,\dots,\Tholder$}\label{linerestart}
\STATE
$s\leftarrow s+1$
\STATE
Update $\xholder_t$, $\yholder_t$ via Eq.~\eqref{SEGupdate}
\STATE
Update $\hat{\xholder}_t$, $\hat{\yholder}_t$ via 
$$\begin{aligned}
\hspace{-.1in}
\hat{\xholder}_{t}	\leftarrow	\frac{s-1}{s}\hat{\xholder}_{t-1} + \frac{1}{s} \xholder_{t},
\,\,
\quad
\hat{\yholder}_{t}	\leftarrow	\frac{s-1}{s}\hat{\yholder}_{t-1} + \frac{1}{s} \yholder_{t}
\end{aligned}$$
\IF{$t\in \{\mathcal{T}_i\}_{i\in [\Epoch-1]}$}
	\STATE
	Overload $\xholder_{t}\leftarrow \hat{\xholder}_{t}$, $\yholder_{t}\leftarrow \hat{\yholder}_{t}$, and set $s\leftarrow 0$
	\hfill \text{//restarting procedure is triggered}
\ENDIF
\ENDFOR
\STATE {\bfseries Output:}
$\hat{\xholder}_{\Tholder}, \hat{\yholder}_{\Tholder}$
\end{algorithmic}
\label{algo_iasgd_restart}
\end{algorithm}

\begin{theorem}[SEG with Averaging/Restarting]\label{theo_SEG_C}
Let Assumptions~\ref{assu_boundednoise_A} and \ref{assu_boundednoise_B} hold with $n=m$.
For any prescribed $\alpha\in (0,1)$, choose the step size $\hat\eta_{\Mb}(\alpha)$ as in Eq.~\eqref{eta_choice} and assume a proper restarting schedule.
For all $\Tholder \ge \Tholder_{\operatorname{complexity}}+1$ we have the following convergence bound for the output $\hat\xholder_{\Tholder}, \hat\yholder_{\Tholder}$ of Algorithm \ref{algo_iasgd_restart}:
\beq\label{Asingleloop_SEG_C}\begin{aligned}
\Exs\left[
\|\hat{\xholder}_{\Tholder}\|^2 + \|\hat{\yholder}_{\Tholder}\|^2
\right]
\le
C_1
\cdot
\frac{\sigma_{\gb}^2}{\color{red} \Tholder - \Tholder_{\operatorname{complexity}} + 1}
,
\end{aligned}
\eeq
where  
$$C_1 \equiv \frac{18}{(1-\alpha)\lambda_{\min}(\Bb\Bb^\top)}\cdot\Bigg[
1 + 
\underbrace{
\frac{
O(\sigma_{\Bb}^2 + \hat\eta_{\Mb}(\alpha)^2\sigma_{\Bb,2}^2)}{
 \lambda_{\min}(\Mb) \land \lambda_{\min}(\widehat{\Mb})
}
}_{\text{higher-order term $O(\kappa_\zeta)$}}
\Bigg],$$
where $\Tholder_{\operatorname{complexity}}$ is the fixed \emph{burn-in complexity} defined as
\beq\label{Tcomp_prime}
\begin{aligned}
\frac{
\mbox{logarithmic factor}
}{
\frac{1}{e}\sqrt{(1-\alpha)\bar\eta_{\Mb}(\alpha)^2\lambda_{\min}(\Bb\Bb^\top)}
- C_2
}
,
\end{aligned}
\eeq
with $C_2$ being 
$$O\Big(
\bar\eta_{\Mb}(\alpha)^{3/2}
(\lambda_{\min}(\Bb\Bb^\top))^{1/4}
\sqrt{\sigma_{\Bb}^2 + \bar\eta_{\Mb}(\alpha)^2\sigma_{\Bb,2}^2}
\Big).$$
\end{theorem}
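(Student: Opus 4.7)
The plan is to apply Theorem~\ref{theo_SEG_B} epoch-by-epoch in Algorithm~\ref{algo_iasgd_restart}, where each restart at time $\mathcal{T}_i$ reinitializes the SEG state at the running average $\hat\xholder_{\mathcal{T}_i}, \hat\yholder_{\mathcal{T}_i}$ of the preceding epoch. Let $R_i^2 \equiv \Exs[\|\xholder_0^{(i)}\|^2 + \|\yholder_0^{(i)}\|^2]$ denote the expected squared distance to the Nash equilibrium at the start of epoch $i$, and let $S_i$ be the length of that epoch. Applying Theorem~\ref{theo_SEG_B} conditionally on the $\sigma$-algebra at the $(i-1)$-th restart time and invoking the tower property yields the one-step recursion $R_{i+1}^2 \le \tau_1 R_i^2/(S_i+1)^2 + \tau_2 \sigma_{\gb}^2/(S_i+1)$, with $\tau_1,\tau_2$ as in the statement of Theorem~\ref{theo_SEG_B}.

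To contract $R_i^2$ geometrically I would choose $S_i$ so that both right-hand terms are at most $R_i^2/(2 e^2)$, which forces $R_{i+1}^2 \le R_i^2/e^2$ and requires $S_i+1 \ge e\sqrt{2\tau_1}$ from the initialization term together with $S_i+1 \ge 2 e^2 \tau_2 \sigma_{\gb}^2/R_i^2$ from the noise term. So long as $R_i^2$ exceeds the stationarity scale $R_\star^2 \equiv 3\sigma_{\gb}^2/(\lambda_{\min}(\Mb)\land\lambda_{\min}(\widehat\Mb))$ identified in Theorem~\ref{theo_SEG_A}, the first constraint is binding and every epoch has the same constant length $\approx e\sqrt{2\tau_1}$; this yields about $\tfrac{1}{2}\log(R_0^2/R_\star^2)$ epochs before the scheduled restarting terminates.

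After this burn-in, the initialization contribution has decayed to the noise floor, and I would devote the remaining $\Tholder - \Tholder_{\operatorname{complexity}}$ iterations to a single long averaging pass, invoking Theorem~\ref{theo_SEG_B} one last time. Since the initialization at the start of this pass is already $O(R_\star^2)$, the contribution $\tau_1 R_\star^2/(\Tholder-\Tholder_{\operatorname{complexity}}+1)^2$ is absorbed into the noise term $\tau_2 \sigma_{\gb}^2/(\Tholder-\Tholder_{\operatorname{complexity}}+1)$, producing Eq.~\eqref{Asingleloop_SEG_C} with $\kappa_\zeta$ entering $C_1$ through $\tau_1,\tau_2$ as a higher-order correction.

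The burn-in count $\Tholder_{\operatorname{complexity}} = \sum_i S_i \approx (\mbox{number of epochs})\cdot e\sqrt{2\tau_1}$, together with $\tau_1 \asymp (1+\kappa_\zeta)/((1-\alpha)\hat\eta_\Mb(\alpha)^2\lambda_{\min}(\Bb\Bb^\top))$, produces the denominator $\tfrac{1}{e}\sqrt{(1-\alpha)\bar\eta_\Mb(\alpha)^2\lambda_{\min}(\Bb\Bb^\top)}$ of Eq.~\eqref{Tcomp_prime} up to the logarithmic numerator. The correction $C_2$ of order $\bar\eta_\Mb(\alpha)^{3/2}(\lambda_{\min}(\Bb\Bb^\top))^{1/4}\sqrt{\sigma_\Bb^2 + \bar\eta_\Mb(\alpha)^2\sigma_{\Bb,2}^2}$ then accounts for a higher-order perturbation of the realized per-iteration contraction caused by the stochasticity of $\Bb_\xi$. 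The main obstacles I anticipate are (i) propagating $\kappa_\zeta$ across epochs so that it remains a multiplicative higher-order factor rather than compounding geometrically across restarts, and (ii) pinning down the transition between the initialization-dominated and noise-dominated regimes sharply enough to match both the denominator in Eq.~\eqref{Tcomp_prime} and its additive $C_2$ perturbation.
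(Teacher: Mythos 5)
Your proposal follows essentially the same route as the paper's proof: apply the averaged-iterate bound of Theorem~\ref{theo_SEG_B} epoch-by-epoch, pick each epoch length so the expected squared Euclidean metric contracts by $1/e^2$ (the paper does this by solving the quadratic $q_1/(\Tholder+1)^2+q_2/(\Tholder+1)-q_3\le 0$ and bounding the root by $2q_2/q_3+\sqrt{2q_1/q_3}$, which is your two-constraint split), run $\Epoch=\lceil\tfrac12\log(\cdot)\rceil$ epochs until the stationarity scale $3\sigma_\gb^2/(\lambda_{\min}(\Mb)\land\lambda_{\min}(\widehat\Mb))$ is reached, and finish with one long averaging pass. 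The only substantive refinement the paper makes --- addressing exactly your anticipated obstacle (i) --- is to use the alternative form Eq.~\eqref{Asingleloop_SEG_B-appendix} of the averaged-iterate bound, which keeps the $(\sigma_\Bb^2+\hat\eta_{\Mb}(\alpha)^2\sigma_{\Bb,2}^2)$ contribution out of the $1/(\Tholder+1)^2$ coefficient so that the per-epoch length stays $\sqrt{2q_1/q_3}\propto 1/\sqrt{(1-\alpha)\hat\eta_{\Mb}(\alpha)^2\lambda_{\min}(\Bb\Bb^\top)}$ with the noise entering only additively (via the geometric sum of $2q_2/q_3\propto e^{2\epoch}$), yielding the additive $C_2$ correction in Eq.~\eqref{Tcomp_prime} rather than a multiplicative $\sqrt{1+\kappa_\zeta}$ factor.
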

The proof of Theorem \ref{theo_SEG_C} is provided in \S\ref{sec_proof,theo_SEG_C}.
Here we not only achieve the optimal $O(1/\sqrt{\Tholder})$ convergence rate for the averaged iterate, but the proper restarting schedule allows us to achieve a convergence rate bound for iteration-averaged SEG in Eq.~\eqref{Asingleloop_SEG_C} that forgets the initialization at an exponential rate instead of the polynomial rate that is obtained without restarting [cf.\ Theorem \ref{theo_SEG_C}].

Finally, we consider the interpolation setting, where the noise vanishes at the Nash equilibrium.
That is, $\gb^\xholder_\xi = \mathbf{0}_n$ and $\gb^\yholder_\xi = \mathbf{0}_m$; i.e.~$\sigma_{\gb} = 0$ in Assumption \ref{assu_boundednoise_B}.
In that setting, we prove that SEG with iteration averaging achieves an accelerated linear convergence rate.
Set the (constant) interval length of restarting timestamps $\Tholder_{\operatorname{thres}}(\alpha)$ as
\beq\label{Tholder_epoch}
\frac{2}{
\frac{1}{e}\sqrt{(1-\alpha)\bar\eta_{\Mb}(\alpha)^2\lambda_{\min}(\Bb\Bb^\top)}
- C_3
}
,
\eeq
with $C_3$ being 
$$O\left(
\bar\eta_{\Mb}(\alpha)^{3/2}
(\lambda_{\min}(\Bb\Bb^\top))^{1/4}
\sqrt{\sigma_{\Bb}^2 + \bar\eta_{\Mb}(\alpha)^2\sigma_{\Bb,2}^2}
\right).$$
We present an analysis of this algorithm in the following theorem, which can be seen as a corollary of Theorem \ref{theo_SEG_C} but benefits from a refined analysis where tight constant prefactor sits in each term of the bound:

\begin{figure*}[!tb]
\begin{center}
\subfigure[General setting.]{\includegraphics[width=0.4\textwidth]{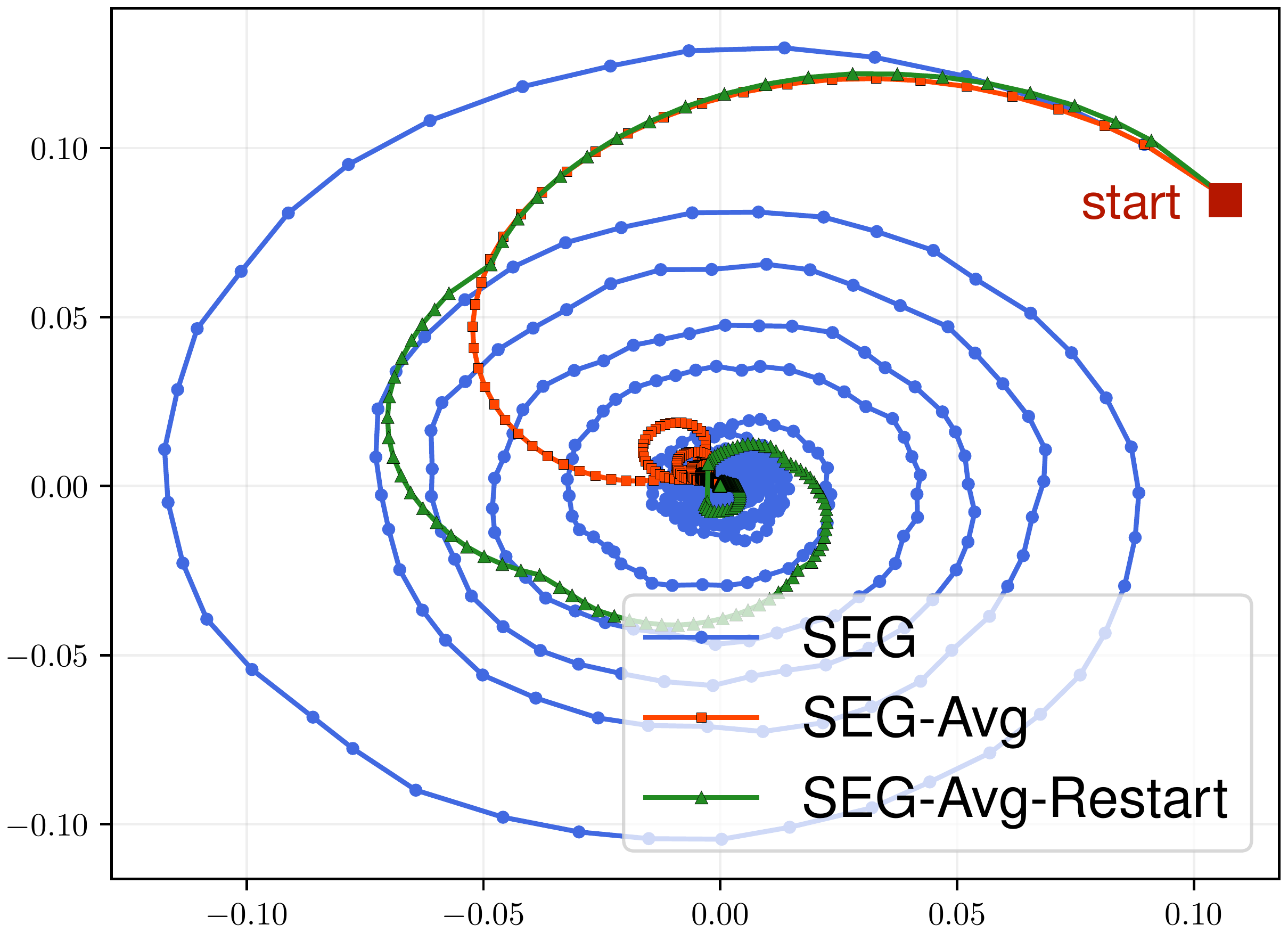}
}
\subfigure[Interpolation setting.]{
\includegraphics[width=0.4\textwidth]{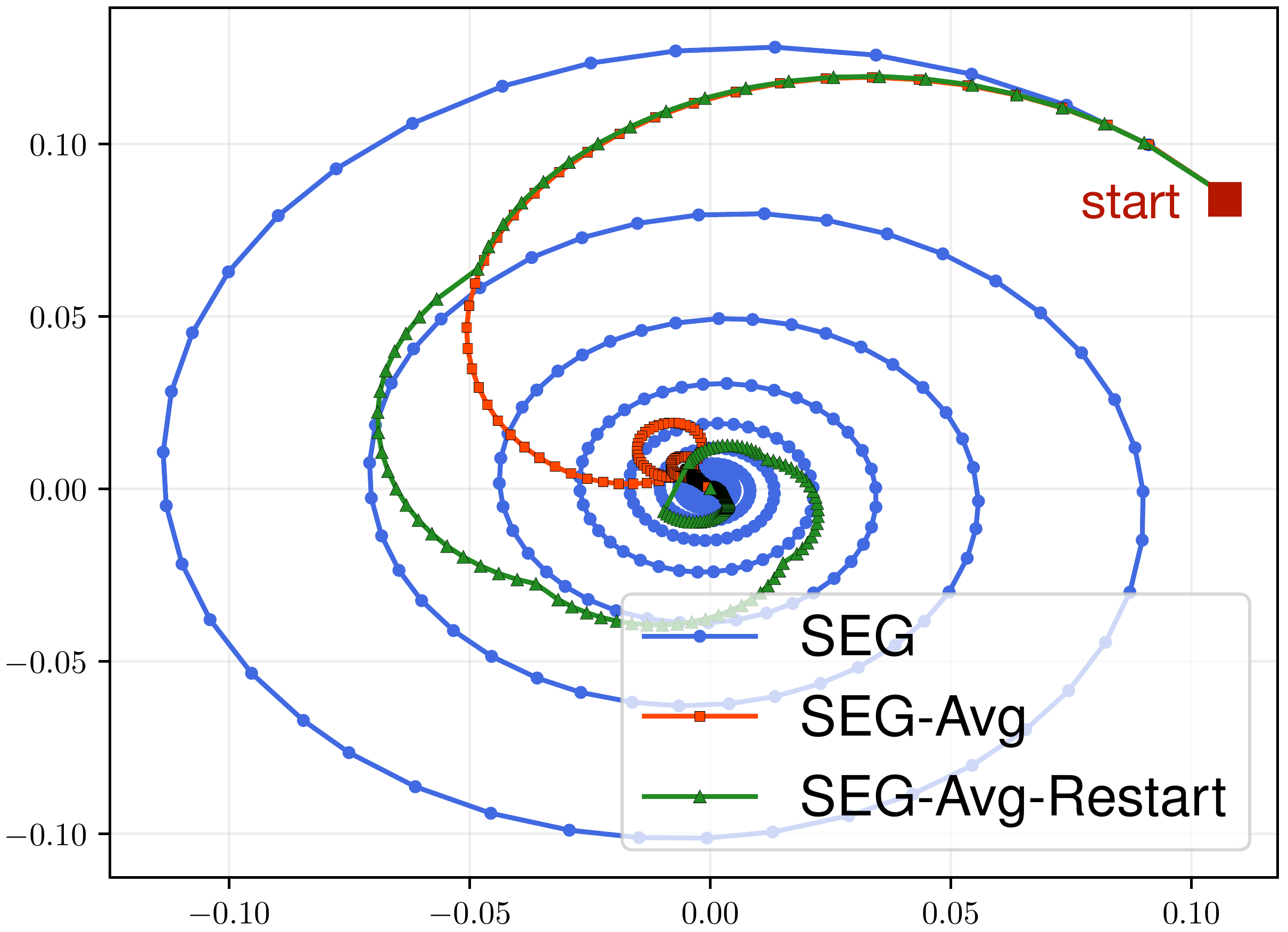}
}
\vspace{-0.1in}
\caption{Illustration (in two dimensions) of the stochastic extragradient (SEG) algorithm, stochastic extragradient with iteration averaging (SEG-Avg), and stochastic extragradient with restarted iteration averaging (SEG-Avg-Restart) on the stochastic minimax optimization problem defined in Eq.~\eqref{Sminimax}.  Here the Nash equilibrium is $[\xholder^*; \yholder^*] = [\mathbf{0}_n; \mathbf{0}_m]$. 
(\textbf{a}) General setting. 
(\textbf{b}) Interpolation setting, where noise vanishes at the Nash equilibrium.
}
\end{center}
\vspace{-0.25in}
\end{figure*}

\begin{figure*}[t]
  \begin{center}
    \subfigure[\label{fig:general}General setting.]{
    \includegraphics[width=0.475\textwidth]{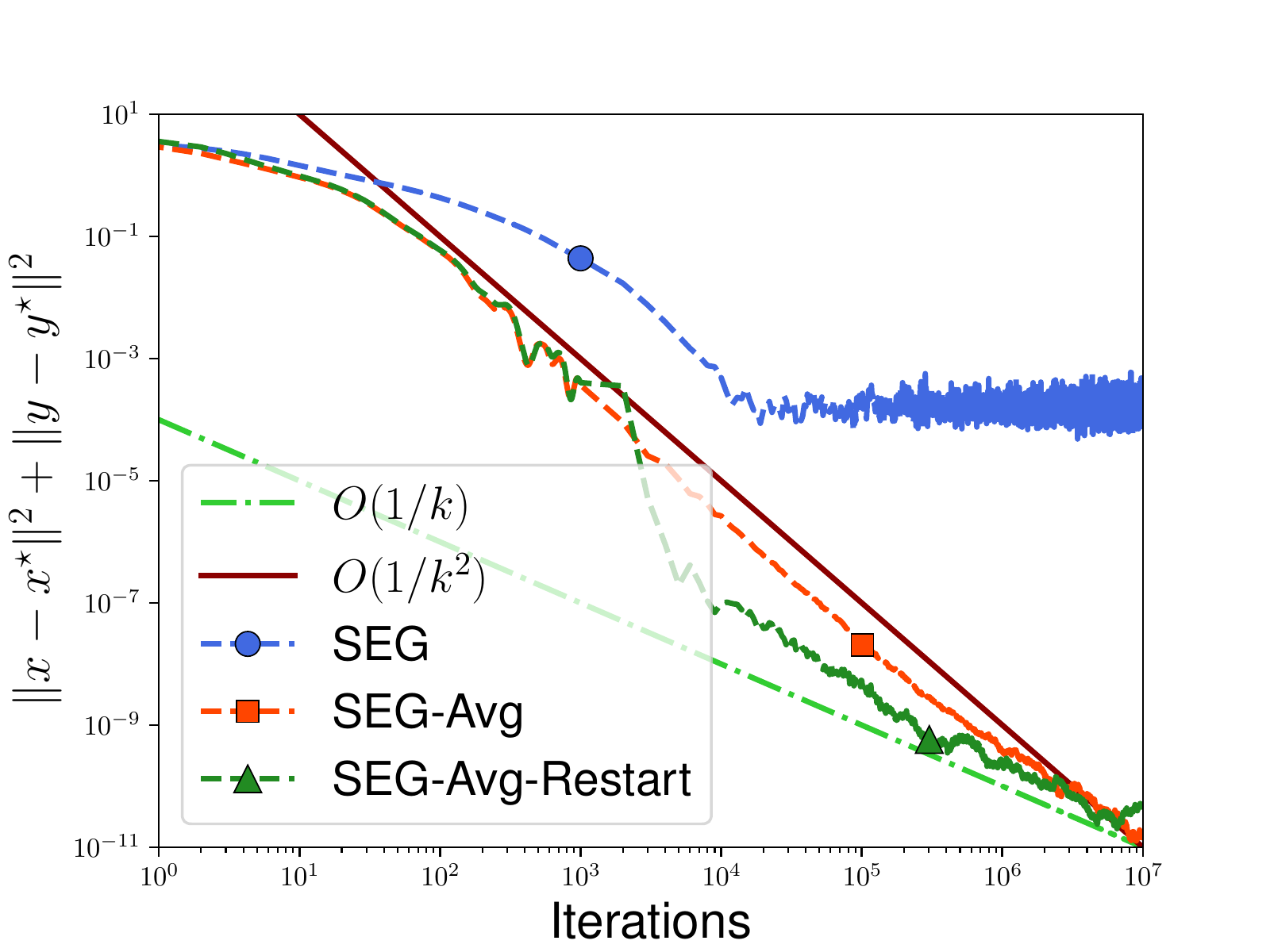}
    }
    \subfigure[\label{fig:interpolation}Interpolation setting.]{
    \includegraphics[width=0.475\textwidth]{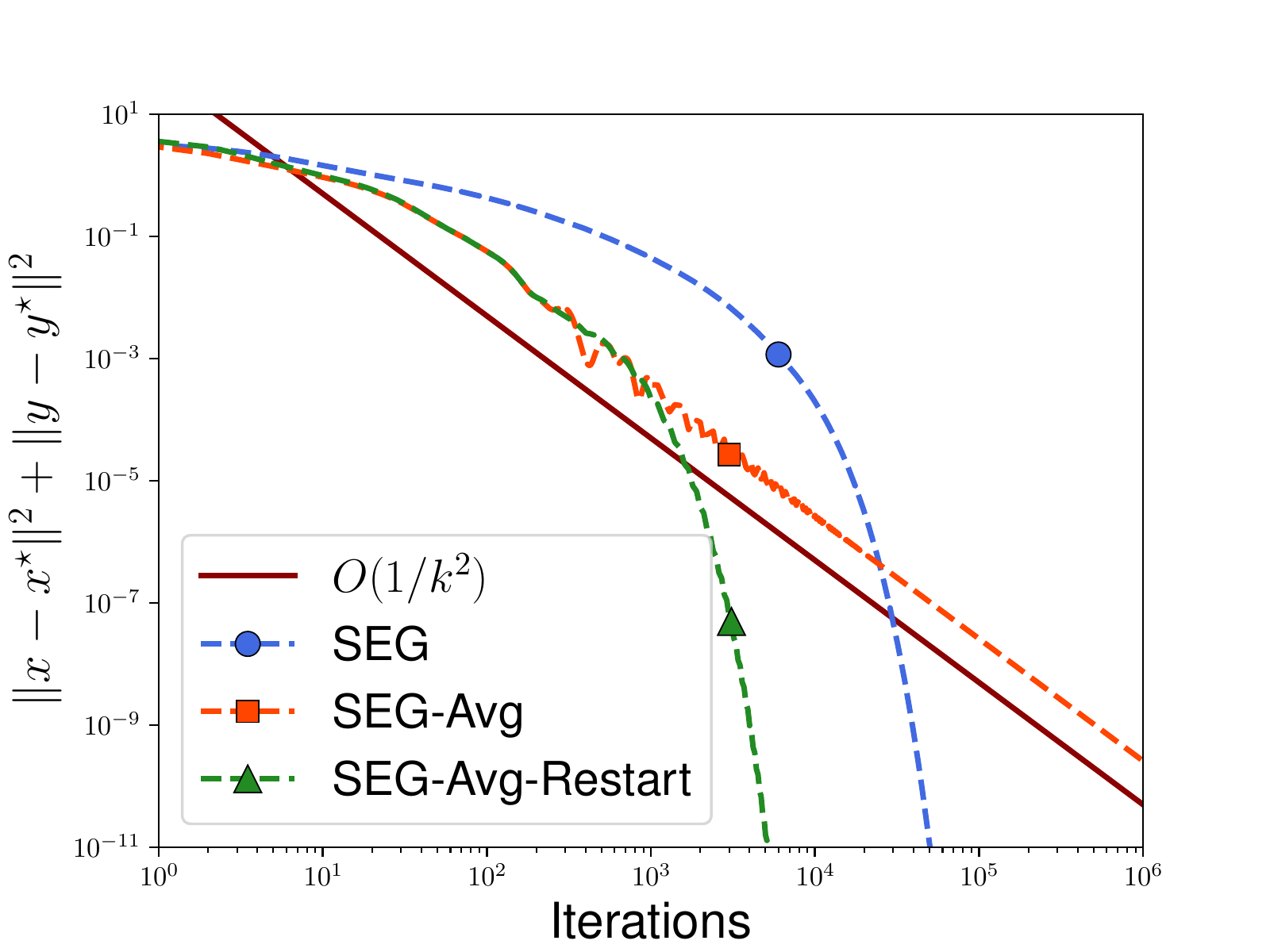}
    }
    \caption{Comparing SEG, SEG-Avg, and SEG-Avg-Restart on a stochastic bilinear optimization problem. The horizontal axis represents the iteration number, and vertical axis represents the square $\ell_2$-distance to the Nash equilibrium. 
    (\textbf{a}) General setting ($d=100, \operatorname{std}_\Bb=0.1, \operatorname{std}_\gb=0.01$). 
    (\textbf{b}) Interpolation setting ($d=100, \operatorname{std}_\Bb=0.1, \operatorname{std}_\gb=0.0$).}
  \end{center}
\vspace{-0.2in}
\end{figure*}

\begin{theorem}[Interpolation Setting]\label{theo_SEGg_interpolation_C}
Let Assumptions~\ref{assu_boundednoise_A} and \ref{assu_boundednoise_B} hold with $n=m$ and $\sigma_{\gb} = 0$.
For any prescribed $\alpha\in (0,1)$ choosing the step size $\eta = \bar\eta_{\Mb}(\alpha)$ as in Eq.~\eqref{eta_choice} and the restarting timestamps $\mathcal{T}_i = i\cdot \Tholder_{\operatorname{thres}}(\alpha)$ where $\Tholder_{\operatorname{thres}}(\alpha)$ was defined as in Eq.~\eqref{Tholder_epoch}, we conclude for all $\Tholder \ge 1$ that is divisible by $\Tholder_{\operatorname{thres}}(\alpha)$ the following convergence rate for the output $\hat\xholder_{\Tholder}, \hat\yholder_{\Tholder}$ of Algorithm \ref{algo_iasgd_restart}:
\begin{align}\label{Asingleloop_SEG_interpolation_C}
&\Exs \left[\|\hat\xholder_{\Tholder}\|^2 + \|\hat\yholder_{\Tholder}\|^2\right]
\\
\le\,\, & e^{-
\frac{\Tholder}{e}\sqrt{(1-\alpha)\bar\eta_{\Mb}(\alpha)^2\lambda_{\min}(\Bb\Bb^\top)}
+ C_4
}
\left[\|\xholder_0\|^2 + \|\yholder_0\|^2\right]
\nonumber
,
\end{align}
with $C_4$ being
$$
O\left(
\Tholder\bar\eta_{\Mb}(\alpha)^{3/2}
(\lambda_{\min}(\Bb\Bb^\top))^{1/4}
\sqrt{\sigma_{\Bb}^2 + \bar\eta_{\Mb}(\alpha)^2\sigma_{\Bb,2}^2}
\right).
$$
\end{theorem}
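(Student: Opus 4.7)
\textbf{Proof proposal for Theorem \ref{theo_SEGg_interpolation_C}.}
The plan is to derive the interpolation result as a refined per-epoch corollary of Theorem~\ref{theo_SEG_B}, and then iterate the contraction across the restart epochs. Since $\sigma_{\gb}=0$, the second (variance) term in the bound \eqref{Asingleloop_SEG_B} vanishes, so that within a single epoch of length $s$, an application of (a $\bar\eta$-variant of) Theorem~\ref{theo_SEG_B} with the same-sample SEG started from $(\xholder_0,\yholder_0)$ yields
\beq\label{prop:perepoch}
\Exs\!\left[\|\overline{\xholder}_s\|^2+\|\overline{\yholder}_s\|^2\right]
\;\le\; \frac{\tau_1}{(s+1)^2}\bigl(\|\xholder_0\|^2+\|\yholder_0\|^2\bigr),
\eeq
where $\tau_1$ is of the same form as in Theorem~\ref{theo_SEG_B} but now instantiated for the larger step size $\bar\eta_{\Mb}(\alpha)$. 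The first thing I would do is verify that the proof of Theorem~\ref{theo_SEG_B} indeed goes through with $\bar\eta_{\Mb}$ in place of $\hat\eta_{\Mb}$ once the $\sigma_{\gb}^2$-term is dropped, since the $\sqrt{2}$-factor in $\hat\eta$ was introduced precisely to control the interaction with the $\gb_\xi$-noise in the averaging argument; removing that noise frees us to use the bigger step and hence a faster rate.

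Next I would choose the epoch length $\Tholder_{\operatorname{thres}}(\alpha)$ so that one application of \eqref{prop:perepoch} produces exactly the $e^{-2}$ contraction that is hard-coded into the restarting rationale. Writing $\sqrt{\tau_1}$ in closed form and solving $(s+1)\ge e\sqrt{\tau_1}$ gives, up to absorbing the $\kappa_\zeta$-dependent cross-terms $\sigma_{\Bb}^2+\bar\eta_{\Mb}^2\sigma_{\Bb,2}^2$ into the lower-order correction $C_3$, precisely the formula for $\Tholder_{\operatorname{thres}}(\alpha)$ displayed in \eqref{Tholder_epoch}. Concretely, with $s=\Tholder_{\operatorname{thres}}(\alpha)$, the per-epoch bound \eqref{prop:perepoch} becomes
\[
\Exs\!\left[\|\hat\xholder_{\mathcal{T}_{i+1}}\|^2+\|\hat\yholder_{\mathcal{T}_{i+1}}\|^2\mid \mathcal{F}_{\mathcal{T}_i}\right]
\;\le\; e^{-2}\!\left(\|\hat\xholder_{\mathcal{T}_i}\|^2+\|\hat\yholder_{\mathcal{T}_i}\|^2\right),
\]
which is the engine of the argument.

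Iterating this contraction over the $k=\Tholder/\Tholder_{\operatorname{thres}}(\alpha)$ epochs by the tower property of conditional expectation and plugging in the expression for $\Tholder_{\operatorname{thres}}(\alpha)$ from \eqref{Tholder_epoch} gives
\[
\Exs\!\left[\|\hat\xholder_\Tholder\|^2+\|\hat\yholder_\Tholder\|^2\right]
\le e^{-2\Tholder/\Tholder_{\operatorname{thres}}(\alpha)}\bigl(\|\xholder_0\|^2+\|\yholder_0\|^2\bigr),
\]
and a direct calculation of $2\Tholder/\Tholder_{\operatorname{thres}}(\alpha)$ reproduces the exponent in \eqref{Asingleloop_SEG_interpolation_C}, with $C_4=\Tholder\cdot C_3$ absorbing the stochastic-coupling corrections at the advertised order. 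I would close with the restart condition $\Tholder$ divisible by $\Tholder_{\operatorname{thres}}(\alpha)$, which is precisely what makes the iteration of the $e^{-2}$-contractions clean.

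The main obstacle is the first step: certifying that Theorem~\ref{theo_SEG_B} survives the transition from $\hat\eta_{\Mb}(\alpha)$ to the $\sqrt{2}$-larger step $\bar\eta_{\Mb}(\alpha)$ when $\sigma_{\gb}=0$, while keeping the constant $\tau_1$ in a form that, after the $(s+1)\ge e\sqrt{\tau_1}$ inversion, leaves the leading $\tfrac1e\sqrt{(1-\alpha)\bar\eta_{\Mb}(\alpha)^2\lambda_{\min}(\Bb\Bb^\top)}$ factor intact. This requires revisiting the Lyapunov recursion that underlies Theorem~\ref{theo_SEG_B} and tracking how the $\sigma_{\Bb}$- and $\sigma_{\Bb,2}$-contributions enter only as the additive correction $C_3$ rather than rescaling the leading term; everything else is bookkeeping on the iterated contractions.
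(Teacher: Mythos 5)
Your overall architecture is the paper's: contract the averaged iterate by $e^{-2}$ within each epoch of length $\Tholder_{\operatorname{thres}}(\alpha)$, restart from the average, chain the contractions by the tower property, and expand $2/\Tholder_{\operatorname{thres}}(\alpha)$ asymptotically to read off the exponent and $C_4$. The gap is in the per-epoch estimate — the step you yourself flag as the main obstacle but then assume away. Your claimed bound $\Exs[\|\overline{\xholder}_s\|^2+\|\overline{\yholder}_s\|^2]\le \tau_1 (s+1)^{-2}(\|\xholder_0\|^2+\|\yholder_0\|^2)$ with a $\Tholder$-independent $\tau_1$ of the $\kappa_\zeta$ form is not available at the step size $\bar\eta_{\Mb}(\alpha)$. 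The constant $\tau_1$ in Theorem \ref{theo_SEG_B} comes from bounding $\Quan_{\Tholder}(\eta)=\sum_{t\le\Tholder}(1-\eta^2\lambdaetaprime)^{t-1}$ by $1/(\eta^2\lambdaetaprime)$, which requires $\lambdaetaprime$ bounded away from zero; by Eq.~\eqref{lambdaetaprime_lower} that holds with factor $1-\eta^2/\eta_{\Mb}^2\ge 1/2$ precisely because $\hat\eta_{\Mb}(\alpha)\le\eta_{\Mb}/\sqrt{2}$ — the $\sqrt 2$ is not only about the $\gb_\xi$-noise. At $\eta=\bar\eta_{\Mb}(\alpha)$, which equals $\eta_{\Mb}$ whenever $\sigma_{\Bb}$ is small, the lower bound on $\lambdaetaprime$ degenerates to zero, $\Quan_{\Tholder}(\eta)$ grows like $\Tholder$, and the accumulated coupling-noise term $\frac{\sigma_{\Bb}^2+\eta^2\sigma_{\Bb,2}^2}{\Tholder^2}\sum_{t=1}^{\Tholder}\Exs\big[\|\xholder_{t-1}\|^2+\|\yholder_{t-1}\|^2\big]$ decays only like $1/\Tholder$, not $1/\Tholder^2$. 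The correct per-epoch bound therefore has the mixed form $A/(\Tholder+1)^2+B(\sigma_{\Bb}^2+\eta^2\sigma_{\Bb,2}^2)/(\Tholder+1)$.

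This is not bookkeeping: it changes the equation that defines the epoch length and hence the form of $C_3$ and $C_4$. The paper's Lemma \ref{theo_SEGg_interpolation_B} keeps $\Quan_{\Tholder+1}(\eta)\le\Tholder+1$, optimizes the Young parameter $\gamma$ to obtain the prefactor $\big(2/\eta+\sqrt{2(\sigma_{\Bb}^2+\eta^2\sigma_{\Bb,2}^2)\Quan_{\Tholder+1}(\eta)}\big)^2$, and converts back to the Euclidean metric via Lemma \ref{lemm_normconvert}; the $e^{-2}$-contraction condition is then a quadratic in $1/\sqrt{\Tholder+1}$, whose root gives the squared-difference-of-square-roots formula for $\Tholder_{\operatorname{thres}}(\alpha)$ and, after expansion, a correction $C_3$ proportional to $\sqrt{\sigma_{\Bb}^2+\bar\eta_{\Mb}(\alpha)^2\sigma_{\Bb,2}^2}$ — first order in $\sigma_{\Bb}$. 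Your linear condition $(s+1)\ge e\sqrt{\tau_1}$ would instead produce a correction of order $\sigma_{\Bb}^2$ and, because Theorem \ref{theo_SEG_B}'s leading constant is $16$ rather than the optimized $4$ of the refined lemma, a leading exponent of $\frac{1}{2e}\sqrt{(1-\alpha)\hat\eta_{\Mb}(\alpha)^2\lambda_{\min}(\Bb\Bb^\top)}$ rather than the advertised $\frac{1}{e}\sqrt{(1-\alpha)\bar\eta_{\Mb}(\alpha)^2\lambda_{\min}(\Bb\Bb^\top)}$. To close the argument you must prove the refined per-epoch lemma with the $\Quan$-dependent prefactor rather than port Theorem \ref{theo_SEG_B} to the larger step size.
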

The proof of Theorem \ref{theo_SEGg_interpolation_C} is provided in \S\ref{sec_proof,theo_SEGg_interpolation_C}.
The idea behind Theorem \ref{theo_SEGg_interpolation_C} is, in plain words, to trigger restarting whenever the last-iterate SEG has travelled through a full cycle, giving insights on the design of $\Tholder_{\operatorname{thres}}(\alpha)$ in the restarting mechanism.
Compared with Eq.~\eqref{xygrowth_SEG_A} in Theorem \ref{theo_SEG_A} with $\sigma_{\gb}$ equal to zero, the contraction rate (in terms of its exponent) to the Nash equilibrium $-
\frac{\eta_{\Mb}^2}{4}\cdot\left( \lambda_{\min}(\Mb)\land\lambda_{\min}(\widehat{\Mb})
\right)
$  improves to $-
\frac{1}{e}\sqrt{(1-\alpha)\bar\eta_{\Mb}(\alpha)^2\lambda_{\min}(\Bb\Bb^\top)}
$ plus higher-order moment terms involving $\Bb_\xi$.
It is worth mentioning that Algorithm \ref{algo_iasgd_restart} achieves this accelerated convergence rate in Eq.~\eqref{Asingleloop_SEG_interpolation_C} via simple restarting and does \emph{not} require an explicit Polyak- or Nesterov-type momentum update rule \citep{NESTEROV[Lectures]}.
In the case of nonrandom $\Bb_\xi$, this rate matches the lower bound \citep{ibrahim2020linear,zhang2019lower},%
\footnote{\citet{ibrahim2020linear} paper provides the stated lower bound $\sqrt{\kappa}\log(1/\epsilon)$. Although the argument in \citet{zhang2019lower} does not achieve this bound directly (since they did not consider the bilinear-coupling case), modifying their arguments easily extends it to the same lower bound in the bilinear-coupling case. Theorem~\ref{theo_SEGg_interpolation_C} matches this lower bound in the nonrandom case.}
and the only algorithm that achieves this optimal rate to our best knowledge is \citet{azizian2020accelerating} without an explicit $1/e$-prefactor on the right hand of Eq.~\eqref{Asingleloop_SEG_interpolation_C}.

We end this section with some remarks.
For the results in this section, we can forgo fully optimizing the prefactor over $\alpha$ and simply set a step size $\eta$ as in Eq.~\eqref{eta_choice}.
Both the analyses of Theorems \ref{theo_SEG_B} and \ref{theo_SEG_C} adopt a step size of $\eta_{\Mb}/\sqrt{2}$, capped by some $\alpha$-dependent threshold, due to the fact that our analysis relies heavily on the last-iterate convergence to stationarity.
In the meantime, Theorem \ref{theo_SEGg_interpolation_C} does not rely on such an argument and accommodates the larger (thresholded) $\eta_{\Mb}$ as the step size.
Lastly, we emphasize that the knowledge of $\lambda_{\min}(\Bb\Bb^\top)$ is required for the algorithm to achieve the accelerated rate.
Considerations regarding such knowledge are related to the topic of adaptivity of stochastic gradient algorithms~\citep[see, e.g.,][]{lei2020adaptivity}.

\begin{figure*}[t]
  \begin{center}
    \subfigure[\label{fig:d=100}$d=100$]{
    \includegraphics[width=0.31\textwidth]{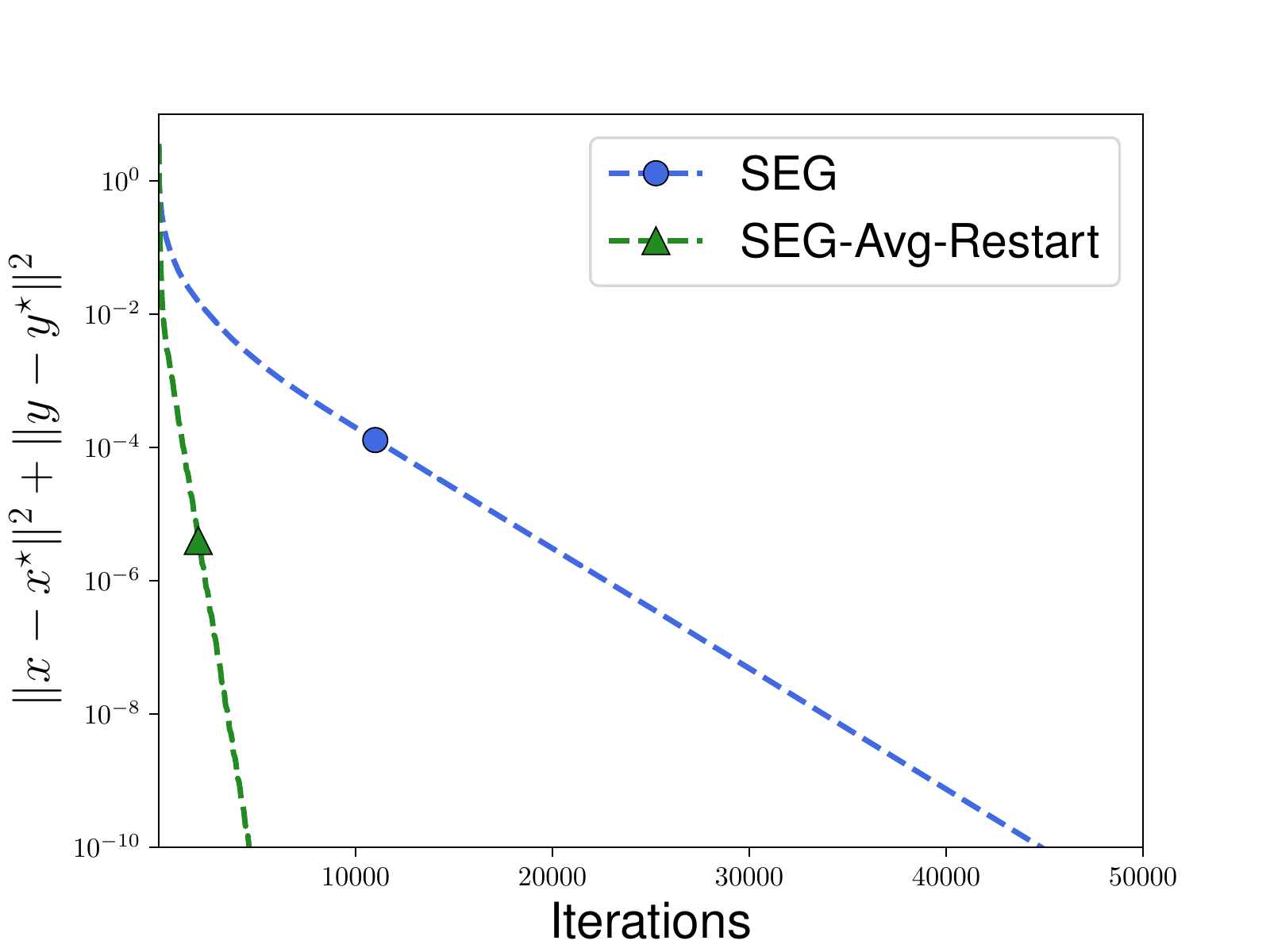}}
    \subfigure[\label{fig:d=200}$d=200$]{
    \includegraphics[width=0.31\textwidth]{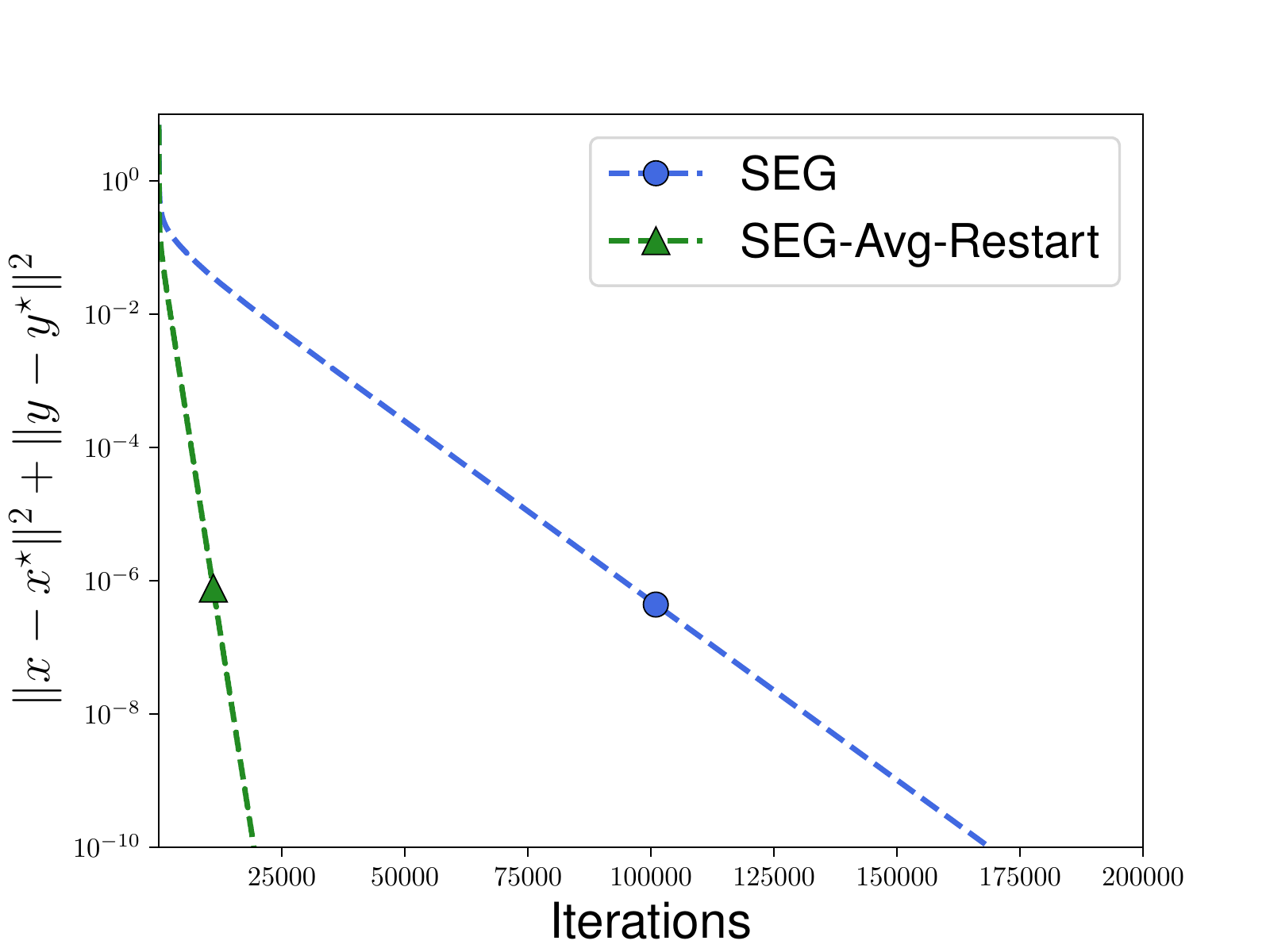}}
    \subfigure[\label{fig:seg-avg-restart-zoomin}SEG-Avg-Restart ($d=200$)]{
    \includegraphics[width=0.31\textwidth]{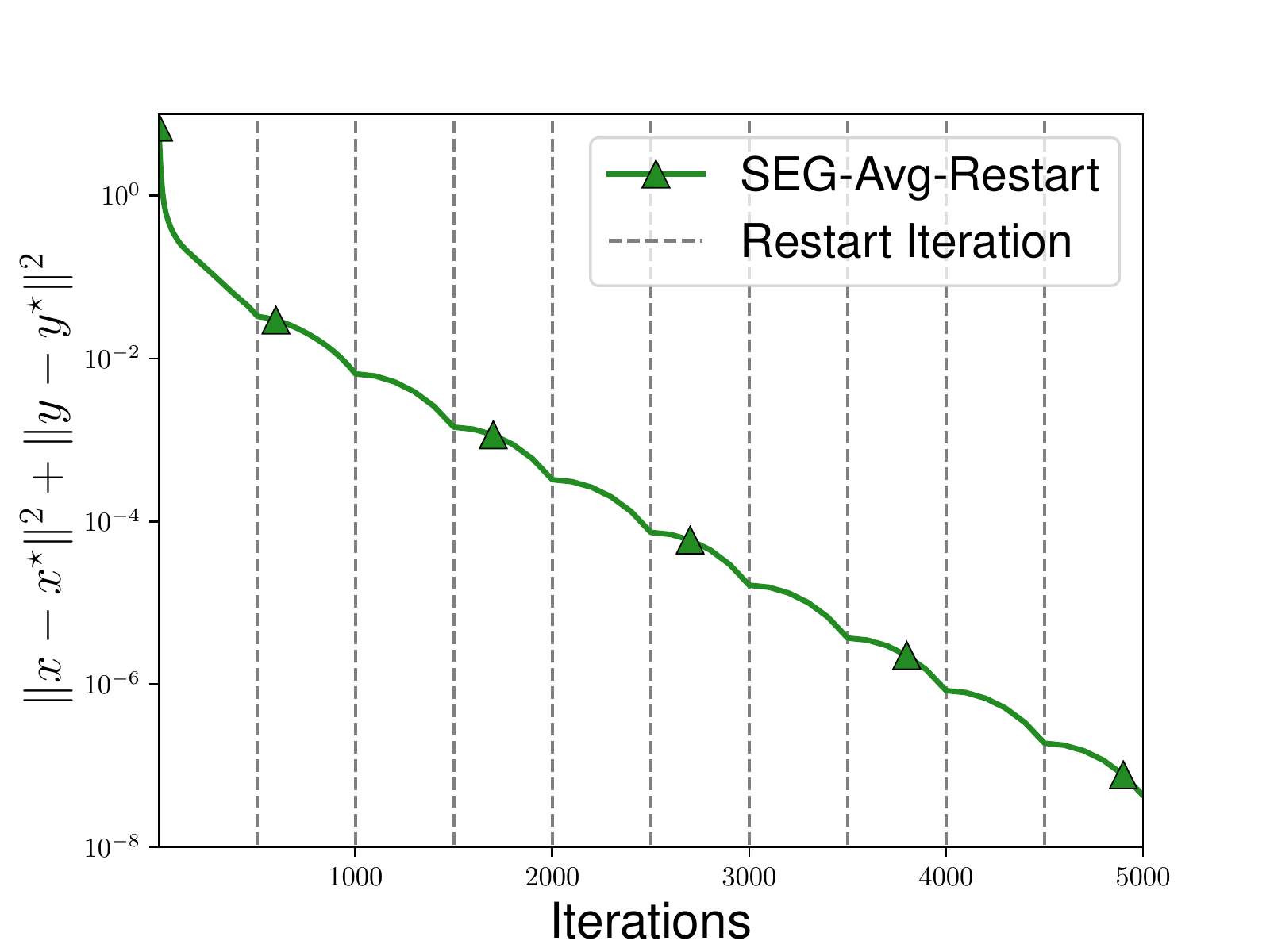}}
    \caption{Comparing SEG and SEG-Avg-Restart on a stochastic bilinear optimization problem in the interpolation setting. The horizontal axis represents the iteration number, and the vertical axis represents the squared $\ell_2$-distance to the Nash equilibrium. 
    (\textbf{a}) Comparison on dimension $d=100$  $(\operatorname{std}_\Bb=0.1, \operatorname{std}_\gb=0.0$). 
    (\textbf{b}) Comparison on dimension $d=200$  $(\operatorname{std}_\Bb=0.1, \operatorname{std}_\gb=0.0$). (\textbf{c}). Zoomed-in visualization of SEG-Avg-Restart on dimension $d=200$ $(\operatorname{std}_\Bb=0.1, \operatorname{std}_\gb=0.0$).}
  \end{center}
  \label{fig:compare-seg-seg-avg-restart-interpolation}
  \vspace{-0.15in}
\end{figure*}

\begin{figure*}[h]
\begin{center}
\subfigure[\label{fig:seg-step-size}Different step size $\eta$.]{
\includegraphics[width=0.45\textwidth]{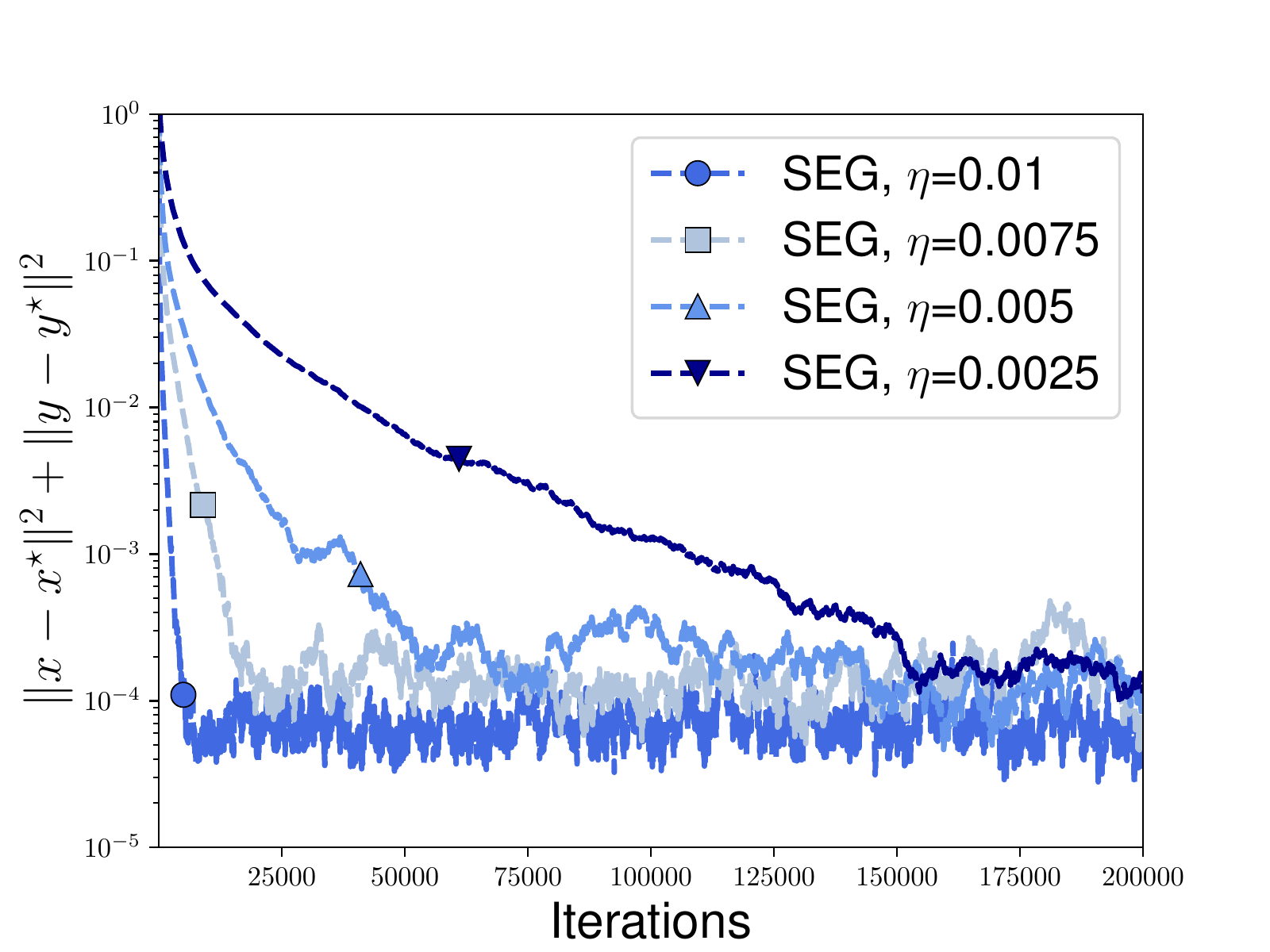}
}
\hspace{.1in}
\subfigure[\label{fig:seg-noise}Different noise $\operatorname{std}_\gb$.]{
\includegraphics[width=0.45\textwidth]{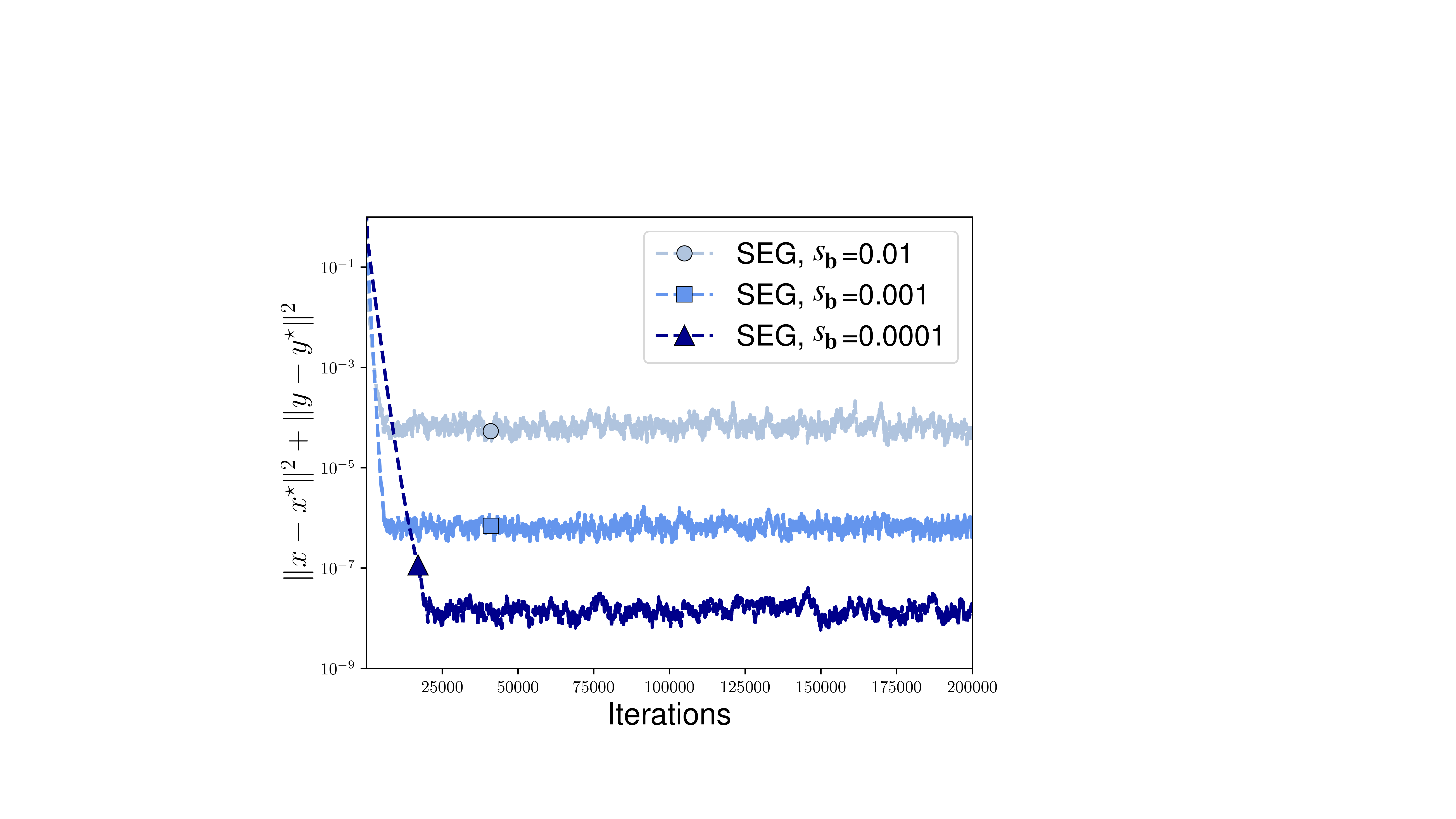}
}
\caption{Comparison of SEG (without averaging) with different step sizes $\eta$ and  noise magnitudes $\operatorname{std}_\gb$ on a stochastic bilinear optimization problem in the general setting. The horizontal axis represents the iteration number, and the vertical axis represents the squared $\ell_2$-distance to the Nash equilibrium. 
(\textbf{a}) Comparison with respect to varying step size $\eta \in \{0.01, 0.0075, 0.005, 0.0025\}$  $(\operatorname{std}_\Bb=0.1, \operatorname{std}_\gb=0.01$). 
(\textbf{b}) Comparison with respect to varying noise $\operatorname{std}_\gb \in \{0.01, 0.001, 0.0001\}$   with step size $\eta=0.01$ $(\operatorname{std}_\Bb=0.1)$.}
\end{center}
\vspace{-0.3in}
\end{figure*}

\begin{figure*}[h]
  \begin{center}
    \subfigure[\label{fig:general-DSEG}General setting.]{
    \includegraphics[width=0.31\textwidth]{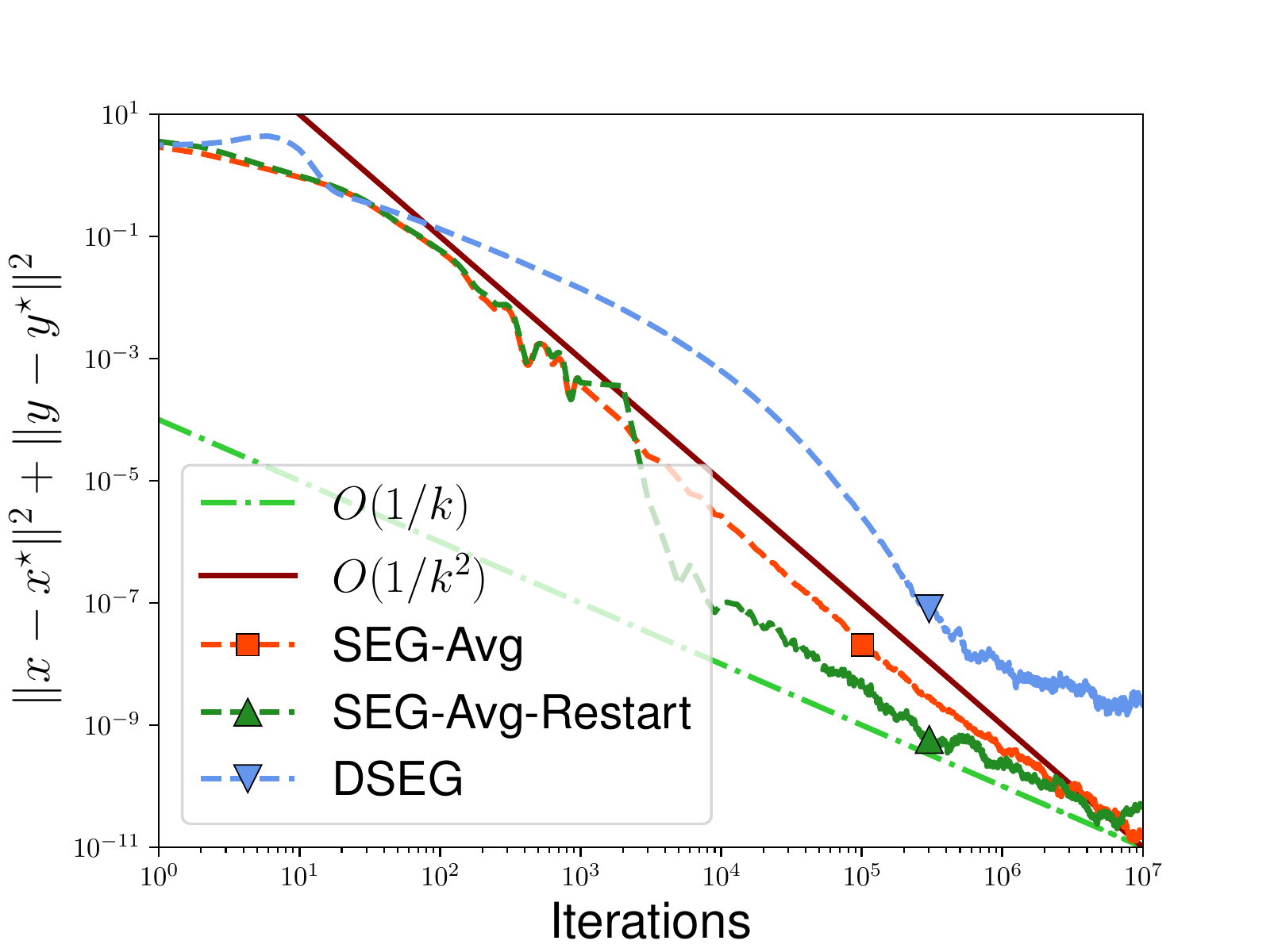}
    }
    \subfigure[\label{fig:interpolation-DSEG}Interpolation setting.]{
    \includegraphics[width=0.31\textwidth]{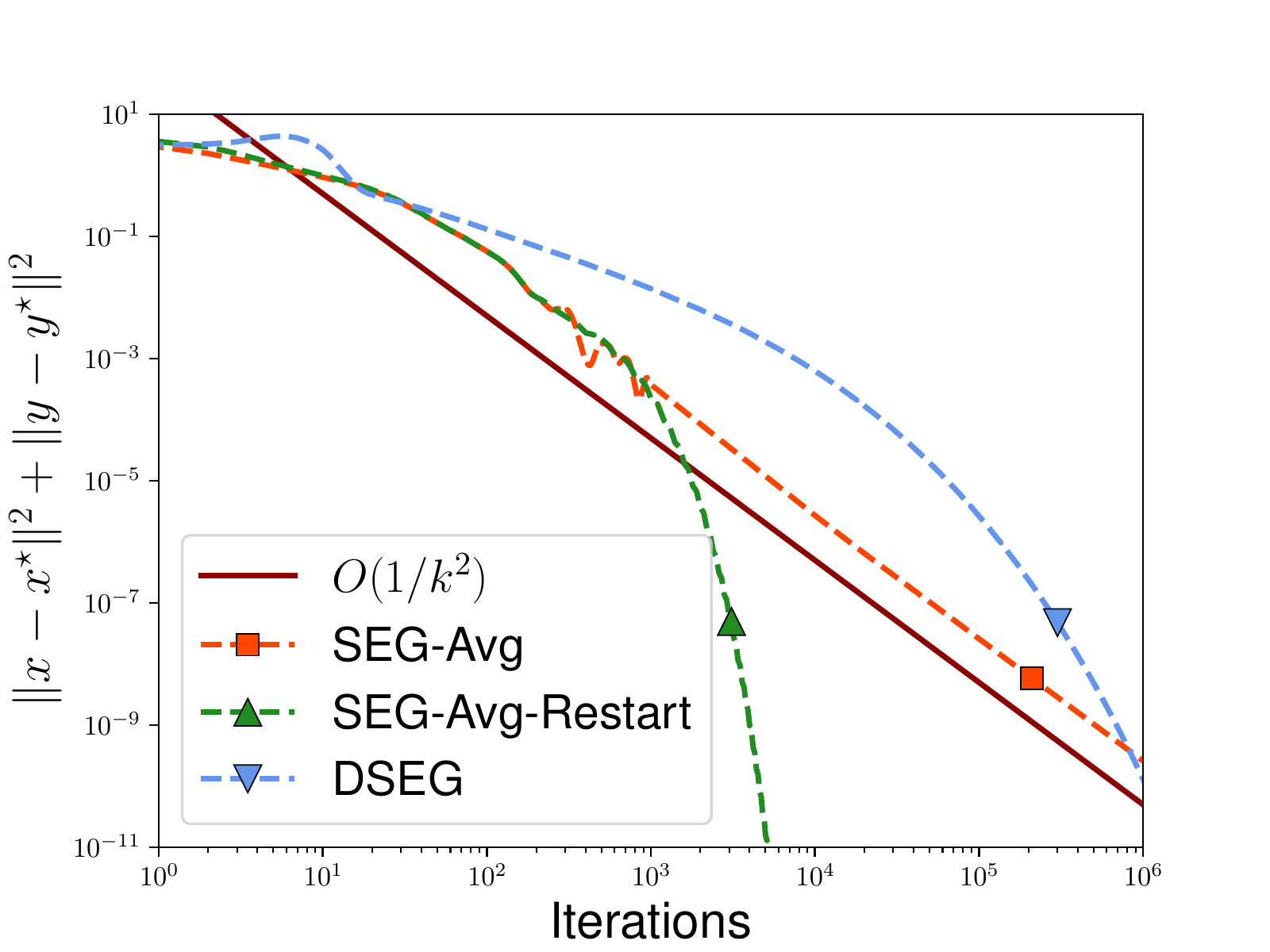}
    }
    \subfigure[\label{fig:interpolation-DSEG-semilogy}Interpolation setting.]{
    \includegraphics[width=0.31\textwidth]{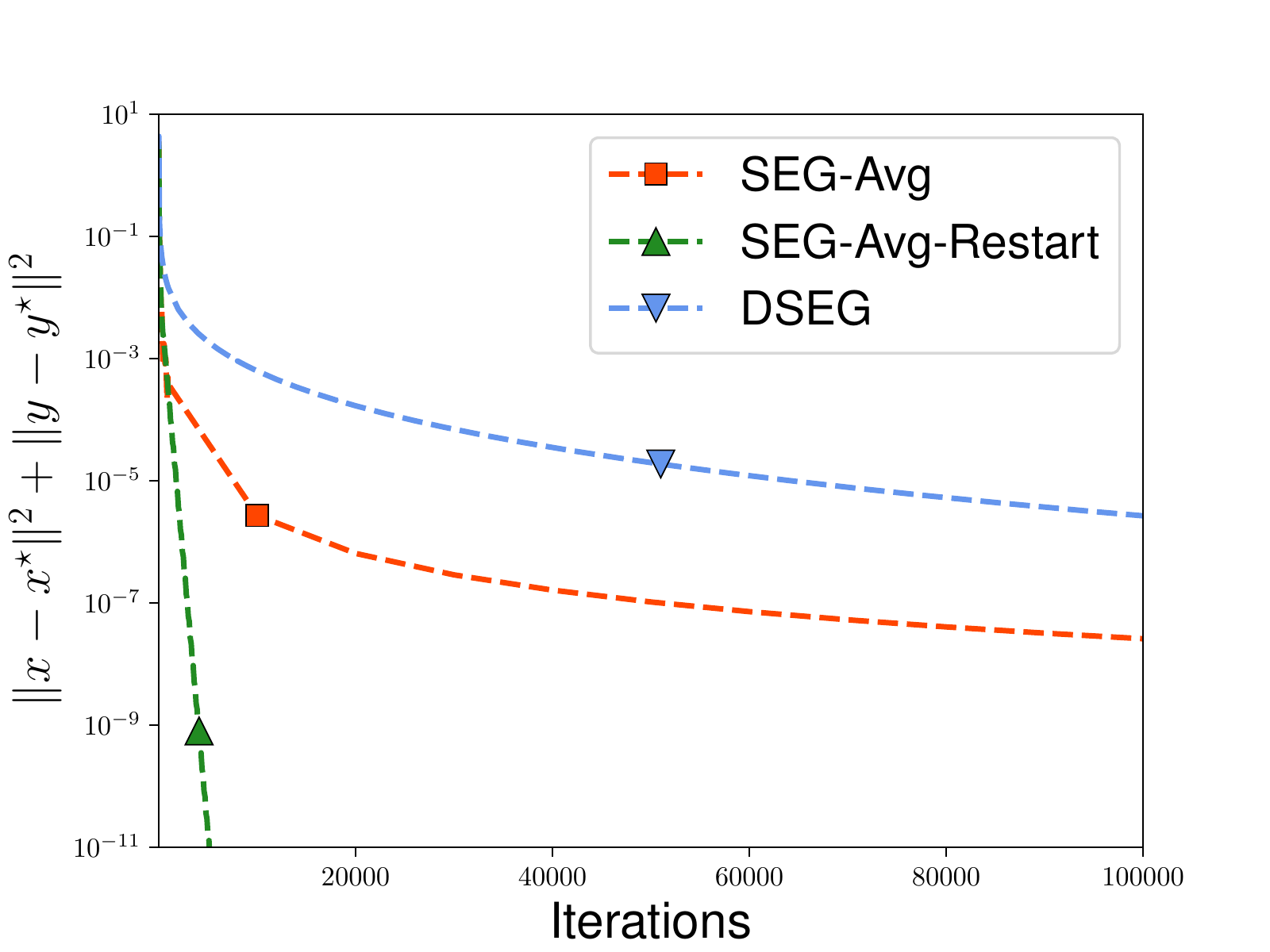}
    }
    \caption{Comparing SEG-Avg, SEG-Avg-Restart, and DSEG methods~\citep{hsieh2020explore} on the stochastic bilinear optimization problem. The horizontal axis represents the iteration number, and the vertical axis represents the square $\ell_2$-distance to the Nash equilibrium. 
    (\textbf{a}) General setting ($d=100, \operatorname{std}_\Bb=0.1, \operatorname{std}_\gb=0.01$). 
    (\textbf{b}) Interpolation setting ($d=100, \operatorname{std}_\Bb=0.1, \operatorname{std}_\gb=0.0$). (\textbf{c}). Interpolation setting ($d=100, \operatorname{std}_\Bb=0.1, \operatorname{std}_\gb=0.0$) under the semi-log scale in the vertical.}
  \end{center}
\vspace{-0.25in}
\end{figure*}

\vspace{0.05in}

\section{EXPERIMENTS}\label{sec_experiment}
In this section, we present the results of numerical experiments on stochastic bilinear minimax optimization problems, including both the general setting and the interpolation setting (i.e., zero noise at the Nash equilibrium). 
The objective function we study remains the same as Eq.~\eqref{Sminimax}, repeated here for convenience:
\beq\tag{\ref{Sminimax}}
\min_\xholder \max_\yholder~
\xholder^\top \Exs_\xi[\Bb_{\xi}] \yholder 
+ 
\xholder^\top \Exs_\xi[\gb^\xholder_\xi] 
+ 
\Exs_\xi[(\gb^\yholder_\xi)^\top] \yholder.
\eeq
Here we assume $\Bb_\xi$ is a square matrix of dimension $d\times d$ where $d = m = n$. 
To generate $\Bb_{\xi}$ for each $\xi$, where $\xi$ corresponds to one iteration in our experiments, we first generate a random vector $\mathbf{u} \in \mathbb{R}^{d}$, where each element of the vector $\mathbf{u}$ is sampled from a uniform distribution, $\mathbf{u}_{j} \sim \text{Unif}\,[1, d+1]$, for $j \in [d]$. Then we define $\Bb = \textsf{Diag}(\mathbf{u})$ and generate $\Bb_{\xi} \in \mathbb{R}^{d\times d}$ as follows:
$$
\Bb_{\xi} = \Bb + \mathbf{E}_{\xi}
    , \quad \text{and}\,\,
[\mathbf{E}_{\xi}]_{ij} \sim \mathcal{N}(0, \operatorname{std}_\Bb^{2})
,
$$
where $\mathbb{E}_{\xi}[\Bb_{\xi}] = \Bb$, and $\Bb$ is a fixed matrix for all $\Bb_{\xi}$. We generate the  noise vectors $\gb_{\xi}^{\xholder} \sim \mathcal{N}(\gb^{\xholder}, \operatorname{std}_\gb^{2}\mathbf{I}_{d\times d})$ and $\gb_{\xi}^{\yholder} \sim \mathcal{N}(\gb^{\yholder}, \operatorname{std}_\gb^{2}\mathbf{I}_{d\times d})$, where we generate the means as follows: $\gb^{\xholder}$, $\gb^{\yholder} \sim \mathcal{N}(0, 0.1 \cdot \mathbf{I}_{d\times d})$ (note that $\gb^{\xholder}$, $\gb^{\yholder}$ are fixed for all $\gb_{\xi}^{\xholder}$, $\gb_{\xi}^{\yholder}$). More specifically, for each iteration, we randomly generate $\{\Bb_{\xi}, \gb_{\xi}^{\xholder}, \gb_{\xi}^{\yholder}\}$ according to the above procedure. 
When $\operatorname{std}_\Bb=\operatorname{std}_\gb=0$, the objective in Eq.~\eqref{Sminimax} equals $ \xholder^{\top}\Bb\yholder +\xholder^{\top}\gb^{\xholder} + (\gb^{\yholder})^{\top}\yholder$, where the Nash equilibrium is $\xholder^{\star} = -(\Bb^{\top})^{-1}\gb^{\yholder}$ and $\yholder^{\star} = -\Bb^{-1}\gb^{\xholder}$.

We study three algorithms in this section: Stochastic ExtraGradient (\textbf{SEG}), Stochastic ExtraGradient with iteration averaging (\textbf{SEG-Avg}), and Stochastic ExtraGradient with Restarted iteration averaging (\textbf{SEG-Avg-Restart}).%
\footnote{Straightforward calculation gives $\sigma_\Bb = \operatorname{std}_\Bb\sqrt{d}$ and $\sigma_\gb = \operatorname{std}_\gb\sqrt{2d}$ in our example, as in Assumptions \ref{assu_boundednoise_A}, \ref{assu_boundednoise_B}.}

\noindent\textbf{General setting ($\sigma_\gb > 0$).}
We first set $\operatorname{std}_\gb = 0.01$ and $\operatorname{std}_\Bb = 0.1$.  The results comparing the three algorithms are shown in Figure~\ref{fig:general}. 
We find that SEG can only converge to a neighborhood of the Nash equilibrium, whereas SEG-Avg and SEG-Avg-Restart can converge to the equilibrium. 
From Figure~\ref{fig:general}, we also observe that the convergence rate of SEG-Avg is ${O}(1/\Tholder^2)$ at the beginning, and then the convergence rate of SEG-Avg changes to ${O}(1/\Tholder)$. 
Similar to the interpolation setting, SEG-Avg-Restart converges faster than both SEG and SEG-Avg. 
We also study the effect of the step size $\eta$ and the noise parameter $\operatorname{std}_\gb$ for SEG. As shown in Figure~\ref{fig:seg-step-size}, we observe that SEG cannot converge to a smaller neighborhood of the Nash equilibrium with smaller step size $\eta$, which aligns well with our theoretical results. We summarize the varying noise experimental results in Figure~\ref{fig:seg-noise}, where we observe that SEG  converges to a smaller neighborhood of the Nash equilibrium when we decrease the noise parameter $\operatorname{std}_\gb$.

\noindent\textbf{Comparisons with DSEG.}
As shown in Figures~\ref{fig:general-DSEG}, ~\ref{fig:interpolation-DSEG}, and \ref{fig:interpolation-DSEG-semilogy}, we provide experimental results on comparing SEG-Avg, SEG-Avg-Restart with the \emph{Double Stepsize Extragradient} (DSEG) method, proposed in \citet{hsieh2020explore}, which allows the step sizes of the extrapolation step and gradient step admitting different scales.
We follow the optimized hyperparameter setup described in \citet{hsieh2020explore} and select the step size constants to achieve faster convergence. 
From Figure~\ref{fig:general-DSEG}, for the general setting, we find that the convergence rate of DSEG is $O(1/K)$ and both SEG-Avg and SEG-Avg-Restart converge faster than DSEG.
For the interpolation setting in Figures~\ref{fig:interpolation-DSEG} and \ref{fig:interpolation-DSEG-semilogy}, we observe that the convergence rate of DSEG is significantly slower than SEG-Avg-Restart.

\noindent\textbf{Interpolation setting ($\sigma_\gb = 0$).}
We first set the noise parameter $\operatorname{std}_\gb = 0$, and set $\operatorname{std}_\Bb = 0.1$. The performance of  SEG, SEG-Avg, and SEG-Avg-Restart is summarized in Figure~\ref{fig:interpolation}, where we set the dimension $d=100$. We observe that the convergence rate of SEG-Avg is ${O}(1/\Tholder^{2})$, which aligns with our theoretical analysis. Meanwhile, we find that SEG-Avg-Restart converges faster than SEG under this interpolation setting. As shown in Figures~\ref{fig:d=100} and \ref{fig:d=200}, we compare the convergence rate of SEG and  SEG-Avg-Restart on a semi-log plot, since both algorithms converge exponentially to the Nash equilibrium in the interpolation setting. We  observe that  SEG-Avg-Restart converges faster than SEG (for both $d=100$ and $d=200$) as suggested by our theory. We also present a zoomed-in plot of SEG-Avg-Restart in Figure~\ref{fig:seg-avg-restart-zoomin}.

\section{CONCLUSIONS}\label{sec_conclusions}
We have presented an analysis of the classical Stochastic ExtraGradient (SEG) method for stochastic bilinear minimax optimization.
Despite that the last iterate only contracts to a fixed neighborhood of the Nash equilibrium and the diameter of the neighborhood is independent of the step size, we show that SEG accompanied by iteration averaging converges to Nash equilibria at a sublinear rate.
Moreover, the forgetting of the initialization is optimal when we use a scheduled restarting procedure in both the general and interpolation settings.
Numerical experiments further validate this use of iteration averaging and restarting in the SEG setting.

Further directions for research include justification of the optimality of our convergence result, improvement of the convergence of SEG for nonlinear convex-concave optimization problems with relaxed assumptions, and connection to the Hamiltonian viewpoint for bilinear minimax optimization.

\section*{Acknowledgements}
We would like to acknowledge support from the Mathematical Data Science program of the Office of Naval Research under grant number N00014-18-1-2764.
Gauthier Gidel and Nicolas Le Roux are supported by Canada CIFAR AI Chairs.
Part of this work was done while Nicolas Loizou was a postdoctoral research fellow at Mila,  Université de Montréal, supported by the  IVADO Postdoctoral Funding Program.
Yi Ma acknowledges support from ONR grant N00014-20-1-2002 and the joint Simons Foundation-NSF DMS grant number 2031899.

\bibliographystyle{plainnat}
\bibliography{SAILreference}

\onecolumn

\newpage
\appendix


\clearpage
\appendix

\thispagestyle{empty}

\onecolumn \makesupplementtitle

\section{ISSUES WITH LAST-ITERATE CONVERGENCE}\label{sec_introissue}
In this section, we revisit the last-iterate convergence of SEG under our setting.
In contrast with minimization problems where stochastic gradient methods with a constant step size converge to a neighborhood of the optimum whose size depends on the step size~\citep{schmidt2014convergence}, solving the stochastic bilinear minimax optimization problem with Stochastic ExtraGradient (SEG) method under standard settings leads to a last iterate contracting to a fixed neighborhood of the Nash equilibrium whose diameter is independent of the step size.
Hence, a classical diminishing step size strategy is not sufficient.

We recall the following notations.
Let $\xi$ be an abstract random variable that is equi-distributed as $\xi_t$.
The expectations, also positive semi-definite, are denoted by $
\widehat{\Mb} = \Exs_\xi\widehat{\Mb}_{\xi} \equiv \Exs_\xi[\Bb_{\xi}^\top \Bb_{\xi}]
$ and $
\Mb = \Exs_\xi\Mb_{\xi} \equiv \Exs_\xi[\Bb_{\xi} \Bb_{\xi}^\top]
$.
It is easy to verify that both matrices are symmetric and positive semi-definite.
Recall that $\eta_{\Mb}$ is the \emph{maximal step size} the SEG algorithm analysis takes, defined earlier as
\beq\tag{\ref{etaMdef}}
\eta_{\Mb}
	\equiv
\frac{1}{\sqrt{
\lambda_{\max}\big(\Mb^{-1/2} [\Exs_\xi \Mb_\xi^2] \Mb^{-1/2}\big)
\lor
\lambda_{\max}\big(\widehat{\Mb}^{-1/2} [\Exs_\xi \widehat{\Mb}_\xi^2] \widehat{\Mb}^{-1/2}\big)
}}.
\eeq
When $\Bb_\xi$ is nonrandom the value $\eta_{\Mb}$ simply reduces to $1/\sqrt{\lambda_{\max}(\Bb^\top\Bb)}$.
It is worth highlighting that the spectral knowledge of matrices involving moments of $\Bb_{\xi}$ that we assume is mild, as analogous spectral information has been traditionally assumed in the online stochastic optimization literature.

We remind the readers of the following result on last-iterate SEG~(extension of \citet{hsieh2020explore}):

\begin{theorem}[SEG Last Iterate]\label{theo_SEG_A}
Under proper assumptions [e.g.,~Assumptions~\ref{assu_boundednoise_A} and \ref{assu_boundednoise_B} in \S\ref{sec_assumptions}],
if $\eta$ is chosen as $\eta_{\Mb} / \sqrt{2}$ where $\eta_{\Mb}$ is defined as in Eq.~\eqref{etaMdef}, we have the following upper bound for the last iterate, $(\xholder_{\Tholder}, \yholder_{\Tholder})$ generated by the algorithm in Eq.~\eqref{SEGupdate}, for all $\Tholder\ge 1$:
\beq\label{xygrowth_SEG_A}
\Exs \left[\|\xholder_\Tholder\|^2 + \|\yholder_\Tholder\|^2\right]
\leq
e^{
-\frac{\eta_{\Mb}^2}{4}\cdot (\lambda_{\min}(\Mb)\land\lambda_{\min}(\widehat{\Mb})) \cdot\Tholder
}\left[\|\xholder_0\|^2 + \|\yholder_0\|^2 \right]
+
\frac{3\sigma_{\gb}^2}{\lambda_{\min}(\Mb)\land\lambda_{\min}(\widehat{\Mb})}
.
\eeq
\end{theorem}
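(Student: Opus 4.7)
I would stack the iterates as $\zholder_t = [\xholder_t;\, \yholder_t]$ and recast the SEG recursion~\eqref{SEGupdate} as a single affine stochastic map. Introduce the block matrix $A_\xi \equiv \bigl(\begin{smallmatrix} 0 & \Bb_\xi \\ -\Bb_\xi^\top & 0\end{smallmatrix}\bigr)$ and the noise vector $h_\xi \equiv [\gb^\xholder_\xi;\, -\gb^\yholder_\xi]$. Plugging the extrapolation half-iterate into the full-iterate update yields
\[
\zholder_t = G_\xi\, \zholder_{t-1} - \eta(I - \eta A_\xi) h_\xi,
\qquad
G_\xi \equiv I - \eta A_\xi + \eta^2 A_\xi^2.
\]
In the square case $n=m$, the matrix $A_\xi$ is antisymmetric, so expanding $G_\xi^\top G_\xi = (I+\eta A_\xi + \eta^2 A_\xi^2)(I-\eta A_\xi+\eta^2 A_\xi^2)$ causes exact cancellation of the odd powers of $\eta$, leaving the clean identity $G_\xi^\top G_\xi = I + \eta^2 A_\xi^2 + \eta^4 A_\xi^4$. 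Since $-A_\xi^2 = \Diag(\Mb_\xi, \widehat{\Mb}_\xi) \succeq 0$, this is the structural observation that drives the rest of the proof.

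\textbf{One-step drift.} Conditioning on $\mathcal{F}_{t-1}$ and using the independence of $\Bb_\xi$ and $h_\xi$ together with $\Exs h_\xi = 0$, the cross term between $G_\xi \zholder_{t-1}$ and $(I-\eta A_\xi)h_\xi$ vanishes, so
\[
\Exs\bigl[\|\zholder_t\|^2 \mid \mathcal{F}_{t-1}\bigr]
= \zholder_{t-1}^\top \Exs[G_\xi^\top G_\xi]\, \zholder_{t-1} + \eta^2\, \Exs\|(I - \eta A_\xi)\, h_\xi\|^2.
\]
By the preceding identity, $\Exs[G_\xi^\top G_\xi] = I - \eta^2 \Diag(\Mb, \widehat{\Mb}) + \eta^4 \Diag(\Exs \Mb_\xi^2,\, \Exs \widehat{\Mb}_\xi^2)$. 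The definition of $\eta_{\Mb}$ in~\eqref{etaMdef} ensures $\Exs \Mb_\xi^2 \preceq \eta_{\Mb}^{-2} \Mb$ (and likewise for $\widehat\Mb$); with $\eta = \eta_{\Mb}/\sqrt{2}$, the $\eta^4$-curvature term absorbs at most half of the $\eta^2$-contraction term, yielding
\[
\Exs[G_\xi^\top G_\xi] \preceq \Bigl(1 - \tfrac{\eta_{\Mb}^2}{4}\bigl[\lambda_{\min}(\Mb) \wedge \lambda_{\min}(\widehat{\Mb})\bigr]\Bigr) I.
\]

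\textbf{Noise bound and unrolling.} A parallel expansion gives $(I-\eta A_\xi)^\top(I-\eta A_\xi) = I - \eta^2 A_\xi^2 = I + \eta^2 \Diag(\Mb_\xi, \widehat\Mb_\xi)$, and independence of $\Bb_\xi$ and $h_\xi$ plus Assumption~\ref{assu_boundednoise_B} yield $\eta^2 \Exs\|(I-\eta A_\xi) h_\xi\|^2 \leq \eta^2 \sigma_\gb^2 + \eta^4 [\lambda_{\max}(\Mb) \vee \lambda_{\max}(\widehat{\Mb})]\, \sigma_\gb^2$. Jensen's inequality applied to the defining quadratics of $\rho_1,\rho_2$ gives $\lambda_{\max}(\Mb) \vee \lambda_{\max}(\widehat\Mb) \leq \eta_{\Mb}^{-2}$, so $\eta^2[\lambda_{\max}(\Mb) \vee \lambda_{\max}(\widehat\Mb)] \leq 1/2$ and the noise drift is bounded by $\tfrac{3}{2}\eta^2 \sigma_\gb^2$. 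Combining with the contraction produces the recursion
\[
\Exs\|\zholder_t\|^2 \leq (1-c)\, \Exs\|\zholder_{t-1}\|^2 + \tfrac{3}{2}\eta^2 \sigma_\gb^2,
\qquad c \equiv \tfrac{\eta_{\Mb}^2}{4}\bigl[\lambda_{\min}(\Mb) \wedge \lambda_{\min}(\widehat{\Mb})\bigr].
\]
Unrolling the geometric series, using $(1-c)^\Tholder \leq e^{-c\Tholder}$, and observing that $\tfrac{3}{2}\eta^2/c = 3 / [\lambda_{\min}(\Mb) \wedge \lambda_{\min}(\widehat\Mb)]$ reproduces~\eqref{xygrowth_SEG_A} with the stated constants exactly.

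\textbf{Anticipated obstacle.} The delicate point is calibrating the step size so that the fourth-order curvature term $\eta^4 \Exs A_\xi^4$ is dominated uniformly by the second-order contraction term $-\eta^2 \Exs A_\xi^2$ on both the $\Mb$- and $\widehat\Mb$-blocks simultaneously. The specific definition of $\eta_{\Mb}$ via the quantities $\rho_1,\rho_2$ is what enables this simultaneous ``halving'' domination, and the choice $\eta = \eta_{\Mb}/\sqrt 2$ is tight enough to retain the sharp constant $\eta_{\Mb}^2/4$ inside the exponent while still producing the clean $3\sigma_\gb^2$ residual. A secondary subtlety is the antisymmetry of $A_\xi$, which requires $n=m$; the rectangular case is reduced to this one by restricting to the nonsingular $\min(n,m)$-dimensional subspace of the SVD of $\Bb$, as noted in the footnote after~\eqref{Sminimax}.
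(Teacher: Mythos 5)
Your proposal is correct and follows essentially the same route as the paper's proof in \S\ref{sec_proof,theo_SEG_A}: the same one-step drift inequality $\Exs\|\zholder_t\|^2 \le (1-\eta^2\lambdaetaprime)\|\zholder_{t-1}\|^2 + \eta^2[1+\eta^2(\lambda_{\max}(\Mb)\lor\lambda_{\max}(\widehat\Mb))]\sigma_\gb^2$, the same step-size calibration via $\Exs\Mb_\xi^2\preceq\eta_{\Mb}^{-2}\Mb$ and $\lambda_{\max}(\Mb)\lor\lambda_{\max}(\widehat\Mb)\le\eta_{\Mb}^{-2}$ (the content of Lemma~\ref{lemm_etaMbound}), and the same geometric unrolling yielding the constants $\eta_{\Mb}^2/4$ and $3$. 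The only difference is presentational: your antisymmetric block operator $A_\xi$ makes the identity $G_\xi^\top G_\xi = I+\eta^2A_\xi^2+\eta^4A_\xi^4$ and the vanishing of all cross terms automatic, whereas the paper verifies these cancellations coordinate-wise by hand.
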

With our chosen step size, as $\Tholder\to\infty$ the expected squared Euclidean norm converges linearly in Eq.~\eqref{xygrowth_SEG_A}, i.e.,
$$
\limsup_{\Tholder\to\infty}\, \Exs \left[\|\xholder_\Tholder\|^2 + \|\yholder_\Tholder\|^2\right]
	\le
\frac{3\sigma_{\gb}^2}{\lambda_{\min}(\Mb) \land\lambda_{\min}(\widehat{\Mb})},
$$
which is, in the $\sigma_{\gb}>0$ case, bounded away from zero.%
\footnote{In contrast to $\lambda_{\min}(\Bb\Bb^\top)$ being zero when $\Bb\in \RR^{n\times m}$ with $n>m$, the $\lambda_{\min}(\Mb) \land\lambda_{\min}(\widehat{\Mb})$ can be positive when $\Bb_\xi$ is random in general.
A standard instance will be $\Bb_\xi$ being $n\times m$ a Gaussian random matrix consisting of independent standard normals.}
A version of Theorem \ref{theo_SEG_A} was provided by \cite{hsieh2020explore}, where a two-timescale method was proposed to remedy this lack of convergence to zero, with a large step size update of gradient step followed by a smaller step size update of the extragradient step. In this case the asymptotic neighborhood size is proportional to the square root of their ratio.
However, \cite{hsieh2020explore} only provide a proof under an assumption of bounded noise.
In the interpolation case where $\sigma_{\gb} = 0$, \cite{vaswani2019painless} showed a weaker version of Theorem \ref{theo_SEG_A} that incorporates an exact line-search step.
To the best of our knowledge, the statement of Theorem \ref{theo_SEG_A} is the first to identify the maximal step size $\eta_{\Mb}$ that can be taken by the SEG method in Eq.~\eqref{etaMdef}.
For completeness, we provide the proof of our version of Theorem \ref{theo_SEG_A} in \S\ref{sec_proof,theo_SEG_A}.

In addition, we introduce the following negative result that establishes a lower bound that accommodates a broader range of step sizes.
This result shows that the upper bound on SEG convergence rate in Theorem \ref{theo_SEG_A} is not improvable even for the case of diminishing step sizes, which limits the applicability of the last-iterate output of the SEG algorithm in general \citep{hsieh2020explore}.

\begin{theorem}[Lower bound for SEG, extension of \cite{hsieh2020explore}]\label{theo_SEG_lower_bound}
Under the assumptions of Theorem~\ref{theo_SEG_A}, there exist $n,m\ge 1$, a distribution $\mathbb{P}$ supported on $\mathbb{R}^{n\times m} \times \mathbb{R}^{n} \times \mathbb{R}^{m}$ for $\{(\Bb_{\xi}, \gb^{\xholder}_{\xi}, \gb^{\yholder}_{\xi})\}$, and an initialization $(\xholder_0, \yholder_0)$ satisfying $\|\xholder_0\|^2 + \|\yholder_0\|^2\ge C_1 \sigma_\gb^{2}$ such that, for any sequence of step sizes $\eta_t\in [0, \eta_{\mathbf{M}}]$, the last-iterate SEG $(\xholder_{\Tholder}, \yholder_{\Tholder})$ generated by Eq.~\eqref{SEGupdate} satisfies $
\Exs \left[\|\xholder_\Tholder\|^2 + \|\yholder_\Tholder\|^2\right] \ge C_2\sigma_\gb^2
$ for any $\Tholder\ge 1$, where $C_1, C_2$ are positive, numerical constants.
\end{theorem}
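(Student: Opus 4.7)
The plan is to construct a minimal one-dimensional instance that exposes the irreducible rotational nature of the bilinear dynamics. I would take $n = m = 1$ with the deterministic coupling $\Bb_{\xi} \equiv 1$ (so $\sigma_{\Bb} = 0$) and independent zero-mean Gaussian intercepts $g^{\xholder}_{\xi}, g^{\yholder}_{\xi} \sim \mathcal{N}(0, \sigma_{\gb}^2/2)$. Assumptions~\ref{assu_boundednoise_A}--\ref{assu_boundednoise_B} then hold immediately, and a direct calculation of $\rho_1,\rho_2$ in Eq.~\eqref{etaMdef} gives $\eta_{\Mb} = 1$, so any admissible schedule satisfies $\eta_t \in [0,1]$.

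Next, I would collapse the iterate into a single complex scalar $z_t \equiv x_t + i\,y_t$. Substituting the extrapolation step into the update step of Eq.~\eqref{SEGupdate} and regrouping in terms of $z_{t-1}$ gives the affine recursion
\[
z_t \,=\, \bigl((1-\eta_t^2) + i\,\eta_t\bigr)\, z_{t-1} + \xi_t,
\]
where $\xi_t$ depends only on $(g^{\xholder}_{t}, g^{\yholder}_{t})$ and the step size, hence is independent of $z_{t-1}$, and a routine variance computation yields $\Exs[|\xi_t|^2] = \eta_t^2(1+\eta_t^2)\sigma_{\gb}^2$. Orthogonality of $z_{t-1}$ and $\xi_t$ then produces the scalar second-moment recursion on $a_t \equiv \Exs[\|\xholder_t\|^2+\|\yholder_t\|^2] = \Exs[|z_t|^2]$:
\[
a_t \,=\, \bigl(1 - \eta_t^2 + \eta_t^4\bigr)\, a_{t-1} + \eta_t^2(1+\eta_t^2)\sigma_{\gb}^2.
\]

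The crux is then a one-line algebraic identity: writing $u \equiv \eta_t^2 \in [0,1]$, one checks by expanding both sides that
\[
a_t - \sigma_{\gb}^2 \,=\, (a_{t-1} - \sigma_{\gb}^2)(1 - u) \,+\, u^2\,(a_{t-1} + \sigma_{\gb}^2),
\]
which is a sum of two non-negative terms whenever $a_{t-1} \ge \sigma_{\gb}^2$. Consequently, choosing the initialization so that $\|\xholder_0\|^2 + \|\yholder_0\|^2 = a_0 \ge \sigma_{\gb}^2$ (so $C_1 = 1$), an immediate induction in $t$ yields $a_T \ge \sigma_{\gb}^2$ for every $T \ge 1$ and every step-size sequence $\{\eta_t\} \subseteq [0, \eta_{\Mb}]$, establishing the claim with $C_2 = 1$.

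The main obstacle---and essentially the entire conceptual content of the argument---is identifying the right invariant that pinches off \emph{every} admissible step-size schedule simultaneously. The displayed identity accomplishes exactly this: the first summand captures the partial contraction at rate $1-u$ and the second the irreducible noise injection scaling as $u^2$, and the two cannot be killed together for any $u \in (0,1)$. This reflects an intrinsic feature of SEG on a rotational field rather than an artifact of any particular schedule, and in particular shows that adaptively shrinking $\eta_t$ cannot push $\Exs[\|\xholder_T\|^2 + \|\yholder_T\|^2]$ below the $\sigma_{\gb}^2$ floor, consistent with the upper bound in Theorem~\ref{theo_SEG_A}.
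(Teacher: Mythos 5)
Your proposal is correct, and it is genuinely more self-contained than what the paper does: the paper provides no argument at all for this theorem, stating only that it is ``a variant of Proposition 1 of Hsieh et al.\ (2020) under our assumptions'' and deferring entirely to that reference. Your one-dimensional instance with deterministic $\Bb_\xi\equiv 1$ and i.i.d.\ Gaussian intercepts does satisfy Assumptions~\ref{assu_boundednoise_A}--\ref{assu_boundednoise_B}, gives $\eta_{\Mb}=1$, and the complex-scalar reduction is exact: the combined update Eq.~\eqref{SEGupdate_combined} becomes $z_t=\bigl((1-\eta_t^2)+i\eta_t\bigr)z_{t-1}+\xi_t$ with $|(1-\eta_t^2)+i\eta_t|^2=1-\eta_t^2+\eta_t^4$ and $\Exs|\xi_t|^2=\eta_t^2(1+\eta_t^2)\sigma_\gb^2$, and the cross term vanishes by independence and zero mean. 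I verified your pivotal identity: with $u=\eta_t^2$, both sides of
\[
(1-u+u^2)a_{t-1}+u(1+u)\sigma_\gb^2-\sigma_\gb^2
=(a_{t-1}-\sigma_\gb^2)(1-u)+u^2(a_{t-1}+\sigma_\gb^2)
\]
expand to $(1-u+u^2)a_{t-1}+(u+u^2-1)\sigma_\gb^2$, so the induction $a_{\Tholder}\ge\sigma_\gb^2$ goes through uniformly over all schedules $\{\eta_t\}\subseteq[0,1]$, yielding $C_1=C_2=1$. What your route buys is an explicit, checkable proof with clean constants that makes the mechanism transparent (contraction at rate $1-u$ versus noise injection at rate $u^2$, which cannot simultaneously vanish); what the paper's deferral buys is brevity and consistency with the exact setting of Hsieh et al., whose Proposition 1 is proved for a closely related but not identical algorithmic setup. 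If anything, your construction is arguably preferable here because it demonstrably satisfies the paper's own assumptions rather than requiring the reader to verify that the cited result transfers.
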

In this work, we remedy the lack of convergence that this results indicate via a convergence analysis of the \emph{averaged} iterates.
We show in the main text of this paper that SEG with properly scheduled restarting and iteration averaging achieves a statistically optimal rate of convergence, as well as an exponentially mixing (forgetting) of the initialization; see \S\ref{sec_SEGg}.

\section{COMPARISON OF THEOREM~\ref{theo_SEG_B} WITH EXISTING WORK} \label{sec_comp,theo_SEG_B}

\begin{table}[t!]
    \centering
    \begin{tabular}{lc}
    \toprule
    $\sigma_{\Bb} > 0$ and $\sigma_{\gb} = 0$ & Convergence Rate
    \\
    \midrule
    \citet{juditsky2011solving} &  $O\left(\max\left\{ \frac{R^2}{\Tholder^2},\, \frac{1}{\Tholder}\right\}\right)^{\dagger}$
    \\
    This work & $O\left(\frac{\|\xholder_0\|^2 + \|\yholder_0\|^2}{\Tholder^2}\right)$
    \\
    \midrule
    $\sigma_{\Bb}=0$ and $\sigma_{\gb} > 0$  & Convergence Rate
    \\
    \midrule
    \citet{hsieh2020explore} &  $O\left(\frac{\sigma_{\gb}^{2}}{\Tholder}\right)+ o\left(\frac{1}{\Tholder}\right)$
    \\
    This work & $O\left(\frac{\sigma_{\gb}^2}{\Tholder}\right) + O\left(\frac{\|\xholder_0\|^2 + \|\yholder_0\|^2}{\Tholder^2}\right)$
    \\
    \bottomrule
    \end{tabular}
    \vspace{-0.05in}
    \caption{Comparing convergence rates with \citet{juditsky2008large}  and \citet{hsieh2020explore}.  $^\dagger R$ is the squared domain radius. }
    \label{tab:convergence_rate_table}
\vspace{-0.1in}
\end{table}

In this section, we compare our results with existing work.
We first provide a few remarks regarding the convergence rate in Theorem \ref{theo_SEG_B}:
\begin{enumerate}[label=(\roman*)]
\item
In the general stochastic setting ($\sigma_{\gb} > 0$), the step size of our algorithm is not sensitive to the number of iteration ($\Tholder$), i.e., simply picking the constant step size would guarantee the sharp convergence of (same-sample) SEG to the optimal solution, which benefits from the intrinsic linearity of our problem. In comparison, the algorithms in \citep{juditsky2011solving, mishchenko2020revisiting} rigidly select the step size $\eta = O(1/\sqrt{\Tholder})$. Meanwhile, our algorithm does not require the  projection step compared with \citet{juditsky2011solving, mishchenko2020revisiting}.

\item
Our analysis of Theorem \ref{theo_SEG_B} indicates that the ``forgetting rate'' of the dependency on initialization $\|\xholder_0\|^2 + \|\yholder_0\|^2$ can be improved to $O(1/\Tholder^2)$, achieving an optimal overall rate that is faster than existing work.
Mathematically we concluded \eqref{Asingleloop_SEG_B} which is recapped here:
\beq\tag{\ref{Asingleloop_SEG_B}}\hspace{-.2in}\begin{aligned}
\Exs\left[
\left\|\overline{\xholder}_{\Tholder}\right\|^2 + \left\|\overline{\yholder}_{\Tholder}\right\|^2
\right]
	&\le
\frac{16 + 8\kappa_\zeta}{(1-\alpha)\hat\eta_{\Mb}(\alpha)^2\lambda_{\min}(\Bb\Bb^\top)}
\cdot
\frac{\|\xholder_0\|^2 + \|\yholder_0\|^2}{(\Tholder+1)^2}
+
\frac{18 + 12\kappa_\zeta}{(1-\alpha)\lambda_{\min}(\Bb\Bb^\top)}
\cdot
\frac{\sigma_{\gb}^2}{\Tholder+1}
,
\end{aligned}\eeq
where we recall that $
\kappa_\zeta
    \equiv
\frac{\sigma_{\Bb}^2 + \hat\eta_{\Mb}(\alpha)^2\sigma_{\Bb,2}^2}{\lambda_{\min}(\Mb)\land\lambda_{\min}(\widehat{\Mb})}
$ denotes the effective noise condition number of problem Eq.~\eqref{Sminimax}.
Nevertheless in our upcoming restarting analysis, we present an alternative convergence rate bound for the averaged iterate as follows: for arbitrary $\gamma\in (0,\infty)$
\beq\label{Asingleloop_SEG_B-appendix}\hspace{-.2in}\begin{aligned}
\Exs\left[
\|\overline{\xholder}_{\Tholder}\|^2 + \|\overline{\yholder}_{\Tholder}\|^2
\right]
&\le	
\frac{8(1+\gamma)}{(1-\alpha)\hat\eta_{\Mb}(\alpha)^2\lambda_{\min}(\Bb\Bb^\top)}
\cdot
\frac{\|\xholder_0\|^2 + \|\yholder_0\|^2}{\color{red}(\Tholder+1)^2}
\\&\hspace{-1in}\,
\frac{
2\left(1+\frac{1}{\gamma}\right)(\sigma_{\Bb}^2 + \hat\eta_{\Mb}(\alpha)^2\sigma_{\Bb,2}^2)\left[
\|\xholder_0\|^2 + \|\yholder_0\|^2 + \frac{3\sigma_{\gb}^2}{\lambda_{\min}(\Mb)\land\lambda_{\min}(\widehat{\Mb})}
\right]
+
9(1+\gamma)\sigma_{\gb}^2
}{(1-\alpha)\lambda_{\min}(\Bb\Bb^\top)}
\cdot
\frac{1}{\color{red}\Tholder+1}
,
\end{aligned}\eeq
which is slightly better (in the case of $\gamma=1$) for our restarting analysis.
See the discussion paragraph on pp.~\pageref{rema_K2inv_upper} for more on this.
\end{enumerate}

\paragraph{Comparison with \citet{hsieh2020explore}}
\cite{hsieh2020explore} considered the independent-sample double-stepsize SEG, and our work focuses on the same-sample extra-gradient methods. The convergence rate of DSEG~\cite{hsieh2020explore} in the general stochastic bilinear minimax optimization problem ($\sigma_{\Bb}=\sigma_{\Bb, 2}=0$ and $\sigma_{\gb} > 0$) is
$$
\Exs\left[
\|{\xholder}_{\Tholder}\|^2 + \|{\yholder}_{\Tholder}\|^2
\right]
\le
\frac{\lambda_{\max}(\Bb^\top\Bb)}{\lambda_{\min}^{2}(\Bb\Bb^\top)}\cdot\frac{\sigma_{\gb}^{2}}{\Tholder}
+
o\left(\frac{1}{\Tholder}\right)
.
$$
In contrast, our convergence rate is, in a coarse manner, (by setting $\sigma_{\Bb}=\sigma_{\Bb, 2}=0$ in Eq.~\eqref{K2inv_upper})
$$
\Exs\left[
\|\overline{\xholder}_{\Tholder}\|^2 + \|\overline{\yholder}_{\Tholder}\|^2
\right]
    \lesssim
\frac{1}{\lambda_{\min}(\Bb\Bb^\top)}\cdot\frac{\sigma_{\gb}^2}{\Tholder} 
+
\frac{\lambda_{\max}(\Bb^\top\Bb)}{\lambda_{\min}(\Bb\Bb^\top)}\cdot\frac{\|\xholder_0\|^2 + \|\yholder_0\|^2}{\Tholder^2}
.
$$
We observe in the above two displays that our rate is sharper than the rate of \citet{hsieh2020explore} in terms of the coefficient of $\sigma_{\gb}^{2}/\Tholder$, which is the dominant term in both bounds.
In particular, when the step size is chosen properly our convergence rate bound is sharper in the interpolation setting where $\sigma_{\Bb} > 0$ and $\sigma_{\gb} = 0$.

\paragraph{Comparison with \citet{juditsky2011solving}}
We first provide the connection between restricted gap  and distance to the Nash equilibrium. Suppose we consider the bounded domain setting for the bilinear minimax optimization problem where $Z = \{\|\xholder\|\le R, \|\yholder\|\le R\}$ and $R$ is the domain radius, and the variational inequality with monotone mapping $F(\zholder) = \begin{bmatrix}
\Bb\yholder \\
-\Bb^{\top}\xholder
\end{bmatrix}$ where $\zholder = \begin{bmatrix}
\xholder \\
\yholder
\end{bmatrix}$.
Then the restricted gap (i.e., merit function) can be expressed as
\begin{equation*}
\text{Err}_{\text{vi}}(\zholder_{K})
=
\max_{\zholder\in Z} \langle F(\zholder), \zholder_{K} - \zholder  \rangle
=
\max_{\|\yholder\|\le R} \xholder_{K}^{\top} \Bb\yholder - \min_{\|\xholder\|\le R} \yholder_{K}^{\top} \Bb^{\top}\xholder = R\left(\|\Bb^{\top}\xholder_{K}\| +\|\Bb\yholder_{K}\|  \right)
.
\end{equation*}
Therefore, the restricted gap can be lower bounded as
\begin{equation*}
    \text{Err}_{\text{vi}}(\zholder_{K}) \geq R\sqrt{\lambda_{\min}(\Bb\Bb^{\top})}\left(\|\xholder_{K}\| +\|\yholder_{K}\|  \right).
\end{equation*}
With this relation at hand, the convergence rate in \citet{juditsky2011solving} when calibrated to the interpolation setting ($\sigma_{\Bb} > 0$ and $\sigma_{\gb} = 0$) is 
$$
\Exs\left[
\|\overline{\xholder}_{\Tholder}\| + \|\overline{\yholder}_{\Tholder}\| 
\right] ^2
    \lesssim
\frac{1}{\lambda_{\min}(\Bb\Bb^{\top})}\cdot \frac{\sigma_{\Bb}^2}{\Tholder}
+
\frac{\lambda_{\max}(\Bb^{\top}\Bb)}{\lambda_{\min}(\Bb\Bb^{\top})}\cdot \frac{R^2}{\Tholder^2}
.
$$
In comparison with \citet{juditsky2011solving} our convergence rate in Eq.~\eqref{Asingleloop_SEG_B} spells
$$
\Exs\left[
\|\overline{\xholder}_{\Tholder}\|^{2} + \|\overline{\yholder}_{\Tholder}\|^{2} 
\right] 
\lesssim
\left(
\frac{1}{\hat\eta_{\Mb}(\alpha)^2} 
+
\frac{\sigma^{2}_{\Bb} + \hat\eta_{\Mb}(\alpha)^2\sigma_{\Bb,2}^2
}{\hat\eta_{\Mb}(\alpha)^2(\lambda_{\min}(\Mb)\land\lambda_{\min}(\widehat{\Mb}))}
\right)
\cdot
\frac{1}{\lambda_{\min}(\Bb\Bb^{\top})}
\cdot
\frac{\|\xholder_0\|^2 + \|\yholder_0\|^2}{\Tholder^2}
,
$$
where our convergence rate is significantly better in terms of the $\sigma_{\Bb}$-dependency.%
\footnote{
In the above five displays, $a_n\lesssim b_n$ denotes $a_n = O(b_n)$ for the two positive sequences.
}

\paragraph{Other related work on stochastic min-max problems}
\citet{alacaoglu2021stochastic} proposed stochastic variance reduced algorithms for solving variational inequalities with the finite-sum structure.  
For more recent results on stochastic iterative methods for solving min-max problems we refer the interested reader to \citet{loizouNeurips2021stochastic, gorbunov2021stochastic, gorbunov2021extragradient} and the references therein.

\section{TECHNICAL ANALYSIS OF LAST-ITERATE SEG}\label{sec_introissue_technical}
In this section we present the technical details of our theoretical results in \S\ref{sec_introissue}, focusing on the last-iterate Theorem \ref{theo_SEG_A}.%
\footnote{The proof of Theorem \ref{theo_SEG_lower_bound} can be found in \cite{hsieh2020explore} and hence we omit it in this work.}
We first introduce a lemma without proof, which is a standard result in linear algebra \citep[Lecture 5]{TREFETHEN-BAU} stating the relations between spectrum of relevant matrices:
\begin{lemma}[Spectral properties]\label{lemm_spectrum}
For our coupling matrix $\Bb\in \RR^{n\times m}$ with $n\ge m$ (tall matrix), $\Bb^\top \Bb$ and $\Bb\Bb^\top$ share the same spectrum or eigenvalues except zeroes:
$$
\sigma(\Bb\Bb^\top)
=
\sigma(\Bb^\top\Bb)		\cup		(\underbrace{0,\dots,0}_{n-m})
.
$$
Furthermore, both $\sigma(\Bb^\top\Bb)$ and $\sigma(\Bb\Bb^\top)$ are subsets of the nonnegative reals, so we always have
$$
\lambda_{\max}(\Bb \Bb^\top) = \lambda_{\max}(\Bb^\top \Bb)
,
$$
and
\beq\label{largeMsmallEV}
\lambda_{\min}\left(\Bb^\top\Bb\right)
\ge
\lambda_{\min}\left(\Bb\Bb^\top\right)
.
\eeq
In special, when $n=m$ we have $\lambda_{\min}(\Bb^\top\Bb) = \lambda_{\min}(\Bb\Bb^\top)$.
The $\lambda_{\min}(\Bb \Bb^\top)$ might be different from $\lambda_{\min}(\Bb^\top \Bb)$ when $\Bb$ is nonsquare, in which case $\lambda_{\min}(\Bb\Bb^\top)$ simply reduces to 0 whenever $n>m$.
\end{lemma}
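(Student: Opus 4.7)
The plan is to deduce all three claims from the singular value decomposition of $\Bb$. I would write $\Bb = U\Sigma V^\top$, where $U\in\RR^{n\times n}$ and $V\in\RR^{m\times m}$ are orthogonal and $\Sigma\in\RR^{n\times m}$ carries the singular values $\sigma_1\ge\sigma_2\ge\dots\ge\sigma_m\ge 0$ on its diagonal. Then
$$
\Bb^\top\Bb \,=\, V(\Sigma^\top\Sigma)V^\top,\qquad \Bb\Bb^\top \,=\, U(\Sigma\Sigma^\top)U^\top,
$$
and $\Sigma^\top\Sigma\in\RR^{m\times m}$, $\Sigma\Sigma^\top\in\RR^{n\times n}$ are diagonal with entries $\sigma_1^2,\dots,\sigma_m^2$ and $\sigma_1^2,\dots,\sigma_m^2,\underbrace{0,\dots,0}_{n-m}$, respectively. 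Orthogonal conjugation preserves eigenvalues, so the two matrices share the multiset $\{\sigma_1^2,\dots,\sigma_m^2\}$ of eigenvalues, while $\Bb\Bb^\top$ additionally carries $n-m$ zero eigenvalues, which is exactly the stated spectral identity.

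From this SVD decomposition, positive semi-definiteness is immediate (both spectra are nonnegative), and $\lambda_{\max}(\Bb\Bb^\top) = \sigma_1^2 = \lambda_{\max}(\Bb^\top\Bb)$ follows at once. For the minimum-eigenvalue inequality, the SVD gives $\lambda_{\min}(\Bb^\top\Bb)=\sigma_m^2$, whereas $\lambda_{\min}(\Bb\Bb^\top)=\sigma_m^2$ if $n=m$ and $\lambda_{\min}(\Bb\Bb^\top)=0$ if $n>m$ (since at least one zero eigenvalue enters the spectrum); in either case $\lambda_{\min}(\Bb^\top\Bb)\ge\lambda_{\min}(\Bb\Bb^\top)$, with equality in the square case. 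This disposes of Eq.~\eqref{largeMsmallEV} and the final sentence.

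As an alternative that avoids invoking the full SVD, one can use the direct eigenvalue-transfer argument: if $\Bb\Bb^\top \mathbf{v}=\lambda\mathbf{v}$ with $\lambda\neq 0$, then $\Bb^\top\mathbf{v}\neq \mathbf{0}$ (since $\lambda\mathbf{v}\neq\mathbf{0}$) and $\Bb^\top\Bb(\Bb^\top\mathbf{v})=\Bb^\top(\Bb\Bb^\top\mathbf{v})=\lambda(\Bb^\top\mathbf{v})$, so $\Bb^\top\mathbf{v}$ is an eigenvector of $\Bb^\top\Bb$ with eigenvalue $\lambda$; the symmetric argument (swapping the roles of $\Bb$ and $\Bb^\top$) together with a standard rank-nullity bookkeeping on multiplicities yields the same spectral identity. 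Non-negativity then follows from $\mathbf{v}^\top \Bb^\top\Bb\mathbf{v}=\|\Bb\mathbf{v}\|^2\ge 0$ and the analogous identity for $\Bb\Bb^\top$.

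No step is genuinely hard here; the only subtlety is the careful accounting of the $n-m$ extra zero eigenvalues of $\Bb\Bb^\top$ in the rectangular case, which is precisely what drives the possibly strict inequality in Eq.~\eqref{largeMsmallEV}. Because this is textbook linear algebra, I expect the ``proof'' in the final write-up to be a one- or two-line reference to the SVD rather than a detailed derivation, consistent with the paper's choice to state the lemma without proof.
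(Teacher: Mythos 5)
Your proof is correct, and the paper in fact states this lemma without proof, citing only the standard SVD material in Trefethen--Bau; your SVD argument (and the eigenvalue-transfer alternative) is precisely the textbook derivation that citation points to, including the correct accounting of the $n-m$ extra zero eigenvalues that makes Eq.~\eqref{largeMsmallEV} possibly strict.
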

Next, we introduce the \emph{contraction parameter} that plays a key role in our analysis.
\beq\label{lambdaeta}
\lambdaetaprime
\equiv
\lambda_{\min}\left(\Mb - \eta^2 [\Exs_\xi\Mb_{\xi}^2] \right) 
\land
\lambda_{\min}\left(\widehat{\Mb} - \eta^2 [\Exs_\xi\widehat{\Mb}_{\xi}^2] \right)
.
\eeq
Note that $\lambdaetaprime$ is \emph{not} necessarily nonnegative for positive $\eta$s.
We have the following lemma establishing various inequalities regarding $\lambdaetaprime$ and $\eta_{\Mb}$ as in Eq.~\eqref{etaMdef}:

\begin{lemma}\label{lemm_etaMbound}
Under Assumption \ref{assu_boundednoise_A} we have
\begin{enumerate}[label=(\arabic*)]
\item
For all $\eta > 0$, it holds that
\beq\label{lambdaetaprime_upper}
\eta^2\lambdaetaprime	\le	1/4
.
\eeq

\item
For all $\eta\in \left(0,\eta_{\Mb}\right]$ where $\eta_{\Mb}$ is defined as in Eq.~\eqref{etaMdef}, it holds that
\beq\label{lambdaetaprime_lower}
\lambdaetaprime
	\ge
\left(1-\frac{\eta^2}{\eta_{\Mb}^2}\right)
\left(\lambda_{\min}(\Mb)
\land
\lambda_{\min}(\widehat{\Mb})\right)
	\ge
0
.
\eeq

\item
$\eta_{\Mb}$ defined as in Eq.~\eqref{etaMdef} satisfies, for any $\eta > 0$ such that $\lambdaetaprime \ge 0$,
\beq\label{etaMbound}
0	<	\eta_{\Mb}		\le
\frac{1}{\sqrt{
\lambda_{\max}(\Mb)	\lor	\lambda_{\max}(\widehat{\Mb})
}
}
	\le
\frac{1}{\sqrt{\lambda_{\max}(\Bb^\top \Bb)}}
.
\eeq
When $\Bb_\xi = \Bb$ a.s., both equalities hold in the above Eq.~\eqref{etaMbound}.
\end{enumerate}
\end{lemma}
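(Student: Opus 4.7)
The plan is to prove the three items in sequence, each leveraging two PSD comparisons that I would establish at the outset: (a) $\Exs_\xi[\Mb_\xi^2]\succeq \Mb^2$ and $\Exs_\xi[\widehat{\Mb}_\xi^2]\succeq \widehat{\Mb}^2$, and (b) $\Exs_\xi[\Mb_\xi^2]\preceq \rho_1 \Mb$ and $\Exs_\xi[\widehat{\Mb}_\xi^2]\preceq \rho_2 \widehat{\Mb}$. For (a) I would write $\Mb_\xi = \Mb + (\Mb_\xi - \Mb)$, expand $\Exs_\xi[\Mb_\xi^2]$, observe that the two cross terms vanish by zero-mean of $\Mb_\xi - \Mb$, and note that $\Exs_\xi[(\Mb_\xi - \Mb)^2]\succeq 0$ because the square of a symmetric matrix is PSD. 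For (b), the definition $\rho_1 = \lambda_{\max}\!\bigl(\Mb^{-1/2}\Exs_\xi[\Mb_\xi^2]\Mb^{-1/2}\bigr)$ gives $\Mb^{-1/2}\Exs_\xi[\Mb_\xi^2]\Mb^{-1/2}\preceq \rho_1 \Ib$, and conjugating on both sides by $\Mb^{1/2}$ delivers the comparison; Assumption~\ref{assu_boundednoise_A} ensures $\Mb$ and $\widehat{\Mb}$ are invertible so the square roots and their inverses are well defined.

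For item (1), using (a) I would write $\Mb - \eta^2\Exs_\xi[\Mb_\xi^2]\preceq \Mb - \eta^2 \Mb^2$, so on the eigenbasis of $\Mb$ the quantity $\eta^2 \lambda_{\min}(\Mb - \eta^2\Exs_\xi[\Mb_\xi^2])$ is upper-bounded by $\max_{\mu\ge 0}[\eta^2\mu - \eta^4 \mu^2]$. Substituting $t = \eta^2\mu$ reduces this to $\max_{t\ge 0} t(1-t) = 1/4$, and the identical argument for $\widehat{\Mb}$ yields Eq.~\eqref{lambdaetaprime_upper} after taking the minimum.

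For item (2), applying (b) gives $\Mb - \eta^2\Exs_\xi[\Mb_\xi^2]\succeq (1 - \eta^2\rho_1)\Mb$. The restriction $\eta\le \eta_{\Mb}$ forces $\eta^2\rho_1 \le \eta^2(\rho_1\lor\rho_2) = \eta^2/\eta_{\Mb}^2 \le 1$, so the scalar factor is nonnegative and $\lambda_{\min}(\Mb - \eta^2\Exs_\xi[\Mb_\xi^2])\ge (1 - \eta^2/\eta_{\Mb}^2)\,\lambda_{\min}(\Mb)$. Pairing this with the analogous bound for $\widehat{\Mb}$ and taking the minimum produces Eq.~\eqref{lambdaetaprime_lower}.

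For item (3), applying (a) once more yields $\Mb^{-1/2}\Exs_\xi[\Mb_\xi^2]\Mb^{-1/2}\succeq \Mb^{-1/2}\Mb^2\Mb^{-1/2} = \Mb$, hence $\rho_1\ge \lambda_{\max}(\Mb)$; symmetrically $\rho_2\ge \lambda_{\max}(\widehat{\Mb})$. This gives $\eta_{\Mb}^{-2} = \rho_1\lor \rho_2 \ge \lambda_{\max}(\Mb)\lor \lambda_{\max}(\widehat{\Mb})$, which is the first inequality in Eq.~\eqref{etaMbound}. For the second inequality, expanding $\Bb_\xi = \Bb + (\Bb_\xi - \Bb)$ and taking expectations yields $\Mb \succeq \Bb\Bb^\top$ and $\widehat{\Mb}\succeq \Bb^\top\Bb$; combined with $\lambda_{\max}(\Bb\Bb^\top) = \lambda_{\max}(\Bb^\top\Bb)$ from Lemma~\ref{lemm_spectrum}, this gives $\lambda_{\max}(\Mb)\lor \lambda_{\max}(\widehat{\Mb})\ge \lambda_{\max}(\Bb^\top\Bb)$. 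Positivity of $\eta_{\Mb}$ is immediate from $\rho_1\lor\rho_2 < \infty$, which in turn follows from $\lambda_{\min}(\Mb)\land\lambda_{\min}(\widehat{\Mb})>0$ in Assumption~\ref{assu_boundednoise_A}. In the nonrandom case $\Bb_\xi = \Bb$, comparison (a) becomes an equality throughout, as does $\Mb = \Bb\Bb^\top$ and $\widehat{\Mb} = \Bb^\top\Bb$, so all inequalities in Eq.~\eqref{etaMbound} collapse to equalities.

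All steps are short linear-algebraic manipulations; the only genuine subtlety is tracking where invertibility of $\Mb$ and $\widehat{\Mb}$ is needed so that the conjugation used in (b) and in the definition of $\rho_1, \rho_2$ makes sense, and this is handled at the outset by invoking the regularity conditions of Assumption~\ref{assu_boundednoise_A}.
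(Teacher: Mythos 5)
Your proposal is correct and follows essentially the same route as the paper's proof: the PSD comparisons $\Exs_\xi[\Mb_\xi^2]\succeq\Mb^2$ (from the zero-mean expansion) and $\Exs_\xi[\Mb_\xi^2]\preceq\rho_1\Mb$ (from the definition of $\eta_{\Mb}$ by conjugation with $\Mb^{1/2}$) are exactly the two ingredients the paper uses, and your scalar maximization $\max_{t\ge 0}t(1-t)=1/4$ in item (1) is the same fact the paper phrases as $1-t+t^2\ge 3/4$. The only cosmetic difference is that you attribute the finiteness of $\rho_1\lor\rho_2$ to the eigenvalue lower bound in Assumption~\ref{assu_boundednoise_A} (which gives invertibility of $\Mb,\widehat{\Mb}$) whereas one also needs the second-moment bound of Eq.~\eqref{sigmaA2sqinit} for $\Exs_\xi[\Mb_\xi^2]$ to be finite; this is a minor bookkeeping point, not a gap.
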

The proof of Lemma \ref{lemm_etaMbound} is detailed in \S\ref{sec_proof,lemm_etaMbound}.

\subsection{Analysis of Theorem~\ref{theo_SEG_A}}\label{sec_proof,theo_SEG_A}
\begin{custom}{Theorem \ref{theo_SEG_A}, Full Version}
Let Assumptions~\ref{assu_boundednoise_A} and \ref{assu_boundednoise_B} hold.
For any positive $\eta$ we have for all $\Tholder\ge 1$
\beq\label{xygrowth_SEGg}\begin{aligned}
	&\quad\,
\Exs \left[\|\xholder_\Tholder\|^2 + \|\yholder_\Tholder\|^2\right]
	\\&\le
\left(1 - \eta^2 \lambdaetaprime\right)^\Tholder \left[\|\xholder_0\|^2 + \|\yholder_0\|^2 \right]
+
\eta^2\Quan_{\Tholder}(\eta)
\left[1 + \eta^2\left(\lambda_{\max}(\Mb) \lor \lambda_{\max}(\widehat{\Mb})\right)\right] \sigma_{\gb}^2
,
\end{aligned}\eeq
where we denote
\beq\label{Quandef}
\Quan_{\Tholder}(\eta)
\equiv
\sum_{t=1}^{\Tholder} \left(1 - \eta^2 \lambdaetaprime\right)^{t-1}
\quad\text{which is upper bounded by $
\Tholder \land \frac{1}{\eta^2 \lambdaetaprime}
$
}
,
\eeq
and $\lambdaetaprime$ was earlier defined as in Eq.~\eqref{lambdaeta}.
For all $\eta\in \left(0,\eta_{\Mb}\right]$ where $\eta_{\Mb}$ is defined as in Eq.~\eqref{etaMdef}, we have for all $\Tholder\ge 1$
\beq\label{xygrowth_SEG_Aprime}\begin{aligned}
	&\quad\,
\Exs\left[\|\xholder_\Tholder\|^2 + \|\yholder_\Tholder\|^2\right]
	\\&\le
\left(1 - \eta^2\left(1-\frac{\eta^2}{\eta_{\Mb}^2}\right) \left(
\lambda_{\min}(\Mb)
\land
\lambda_{\min}(\widehat{\Mb})
\right)\right)^\Tholder \left[\|\xholder_0\|^2 + \|\yholder_0\|^2 \right]
	\\&\hspace{1in}
+
\eta^2\Quan_{\Tholder}(\eta)
\left[1 + \eta^2\left(\lambda_{\max}(\Mb) \lor \lambda_{\max}(\widehat{\Mb})\right)\right] \sigma_{\gb}^2
	\\&\le
\exp\left(
-\frac{\eta_{\Mb}^2}{4} \left(\lambda_{\min}(\Mb)\land\lambda_{\min}(\widehat{\Mb})\right)\cdot\Tholder
\right)\left[\|\xholder_0\|^2 + \|\yholder_0\|^2 \right]
+
\frac{3\sigma_{\gb}^2}{\lambda_{\min}(\Mb)\land\lambda_{\min}(\widehat{\Mb})}
	\\&\hspace{1in}
\text{(when $\eta = \eta_{\Mb}/\sqrt{2}$)}
.
\end{aligned}\eeq
\end{custom}
Analogous to our remarks immediately following the statement of Theorem \ref{theo_SEG_A} in \S\ref{sec_introissue}, for a given range of step size such that $\lambdaetaprime$ is positive, $\Quan_{\Tholder}(\eta) \to 1/(\eta^2\lambdaetaprime)$ as $\Tholder\to\infty$ the squared Euclidean norm approaches
$$
\Exs \left[\|\xholder_\Tholder\|^2 + \|\yholder_\Tholder\|^2\right]
	\to
\frac{1}{\lambdaetaprime}
\left[1 + \eta^2\left(
\lambda_{\max}(\Mb) \lor \lambda_{\max}(\widehat{\Mb})
\right)\right] \sigma_{\gb}^2
,
$$
which is bounded below, due to Eq.~\eqref{lambdaetaprime_upper}, by
$
\frac{\sigma_{\gb}^2}{\lambda_{\min}(\Mb)\land\lambda_{\min}(\widehat{\Mb})}
$
due to $
\lambdaetaprime	\le
\lambda_{\min}(\Mb)\land\lambda_{\min}(\widehat{\Mb})
$
and hence bounded away from 0.
Optimizing the choice of $\eta$ achieves, as observed in Eq.~\eqref{xygrowth_SEG_A}, a limiting upper bound that triples the above display so the bandwidth of the limiting points is rather narrow (within a triple bandwidth).

We now turn to prove Theorem~\ref{theo_SEG_A}.

\begin{proof}[Proof of Theorem~\ref{theo_SEG_A}]
We denote for short $
\Mb_{\xi} \equiv \Bb_{\xi}\Bb_{\xi}^\top
$ and $
\widehat{\Mb}_{\xi} \equiv \Bb_{\xi}^\top \Bb_{\xi}
$, and $
\xholder \equiv \xholder_t
$, $
\yholder \equiv \yholder_t
$, $
\xholder^- \equiv \xholder_{t-1}
$, $
\yholder^- \equiv \yholder_{t-1}
$, $
\Bb_\xi \equiv \Bb_{\xi,t}
$, $
\gb_\xi \equiv \gb_{\xi,t}
$, as well as the conditional expectation $\Exs_\xi [\cdot] = \Exs\left[\cdot \mid \mathcal{F}_{t-1} \right]$.
Recall~Eq.~\eqref{SEGupdate} combined gives the SEG update rules is in total
\beq\label{SEGupdate_combined}\begin{aligned}
\xholder
	&=
\xholder^- - \eta^2 \Bb_{\xi} \Bb_{\xi}^\top \xholder^- - \eta\left[ \Bb_{\xi} \yholder^- + \gb^\xholder_{\xi}\right] - \eta^2 \Bb_{\xi} \gb^\yholder_{\xi}
	\\
\yholder
	&=
\yholder^- - \eta^2 \Bb_{\xi}^\top \Bb_{\xi} \yholder^- + \eta\left[ \Bb_{\xi}^\top \xholder^- + \gb^\yholder_{\xi}\right] - \eta^2 \Bb_{\xi}^\top \gb^\xholder_{\xi}
.
\end{aligned}\eeq
By analyzing equation~Eq.~\eqref{SEGupdate_combined} we derive
$$\begin{aligned}
\Exs_\xi \left[\|\xholder\|^2 + \|\yholder\|^2 \right]
	&=
\Exs_\xi \left\|
\left(\Ib - \eta^2 \Bb_\xi \Bb_\xi^\top\right)\xholder^- - \eta \Bb_\xi \yholder^- 
- \eta \gb^\xholder_\xi - \eta^2\Bb_\xi \gb^\yholder_\xi
\right\|^2
	\\&\quad\,
+
\Exs_\xi \left\|
\left(\Ib - \eta^2 \Bb_\xi^\top \Bb_\xi\right)\yholder^- + \eta \Bb_\xi^\top \xholder^- 
+ \eta \gb^\yholder_\xi -\eta^2 \Bb_\xi^\top \gb^\xholder_\xi
\right\|^2
	\\&=
\Exs_\xi \left\|
\left(\Ib - \eta^2 \Bb_\xi \Bb_\xi^\top\right)\xholder^- - \eta \Bb_\xi \yholder^- 
\right\|^2
+
\Exs_\xi \left\|
- \eta \gb^\xholder_\xi - \eta^2\Bb_\xi \gb^\yholder_\xi
\right\|^2
	\\&\quad\,
\underbrace{+
2\Exs_\xi \left\langle
\left(\Ib - \eta^2 \Bb_\xi \Bb_\xi^\top\right)\xholder^- - \eta \Bb_\xi \yholder^- 
,
- \eta \gb^\xholder_\xi - \eta^2\Bb_\xi \gb^\yholder_\xi
\right\rangle
}_{\text{cross term}}
	\\&\quad\,
+
\Exs_\xi \left\|
\left(\Ib - \eta^2 \Bb_\xi^\top \Bb_\xi\right)\yholder^- + \eta \Bb_\xi^\top \xholder^- 
\right\|^2
+
\Exs_\xi \left\|
\eta \gb^\yholder_\xi -\eta^2 \Bb_\xi^\top \gb^\xholder_\xi
\right\|^2
	\\&\quad\,
\underbrace{+
2\Exs_\xi \left\langle
\left(\Ib - \eta^2 \Bb_\xi^\top \Bb_\xi\right)\yholder^- + \eta \Bb_\xi^\top \xholder^- 
,
\eta \gb^\yholder_\xi -\eta^2 \Bb_\xi^\top \gb^\xholder_\xi
\right\rangle
}_{\text{cross term}}
,
\end{aligned}$$
where by independence we have the cross terms being
$$\begin{aligned}
	&
2\Exs_\xi \left\langle
\left(\Ib - \eta^2 \Bb_\xi \Bb_\xi^\top\right)\xholder^- - \eta \Bb_\xi \yholder^- 
,
- \eta \gb^\xholder_\xi - \eta^2\Bb_\xi \gb^\yholder_\xi
\right\rangle
=
0
	\\&
2\Exs_\xi \left\langle
\left(\Ib - \eta^2 \Bb_\xi^\top \Bb_\xi\right)\yholder^- + \eta \Bb_\xi^\top \xholder^- 
,
\eta \gb^\yholder_\xi -\eta^2 \Bb_\xi^\top \gb^\xholder_\xi
\right\rangle
=
0
.
\end{aligned}$$
Therefore
$$\begin{aligned}
	&\quad\,
\Exs_\xi \left[\|\xholder\|^2 + \|\yholder\|^2 \right]
	\\&=
\Exs_\xi \left\|
\left(\Ib - \eta^2 \Bb_\xi \Bb_\xi^\top\right)\xholder^- - \eta \Bb_\xi \yholder^- 
\right\|^2
+
\Exs_\xi \left\|
- \eta \gb^\xholder_\xi - \eta^2\Bb_\xi \gb^\yholder_\xi
\right\|^2
	\\&\quad\,
+
\Exs_\xi \left\|
\left(\Ib - \eta^2 \Bb_\xi^\top \Bb_\xi\right)\yholder^- + \eta \Bb_\xi^\top \xholder^- 
\right\|^2
+
\Exs_\xi \left\|
\eta \gb^\yholder_\xi -\eta^2 \Bb_\xi^\top \gb^\xholder_\xi
\right\|^2
	\\&=
\Exs_\xi\left\|
\left(\Ib - \eta^2 \Bb_\xi \Bb_\xi^\top\right)\xholder^-
\right\|^2
+
\Exs_\xi\left\|
-\eta \Bb_\xi \yholder^-
\right\|^2
+ 
\Exs_\xi\left\|
\left(\Ib - \eta^2 \Bb_\xi^\top \Bb_\xi\right)\yholder^-
\right\|^2
+
\Exs_\xi\left\|
\eta \Bb_\xi^\top \xholder^-
\right\|^2
	\\&\quad\,
\underbrace{
+
2\Exs_\xi\left\langle
\left(\Ib - \eta^2 \Bb_\xi \Bb_\xi^\top\right)\xholder^-
,
-\eta \Bb_\xi \yholder^-
\right\rangle
+
2\Exs_\xi\left\langle
\left(\Ib - \eta^2 \Bb_\xi^\top \Bb_\xi\right)\yholder^-
,
\eta \Bb_\xi^\top \xholder^-
\right\rangle
}_{\text{cross term}}
	\\&\quad\,
+
\Exs_\xi \left\|- \eta \gb^\xholder_\xi\right\|^2
+
\Exs_\xi \left\|- \eta^2\Bb_\xi \gb^\yholder_\xi\right\|^2
+
\Exs_\xi \left\|\eta \gb^\yholder_\xi\right\|^2
+
\Exs_\xi \left\|-\eta^2 \Bb_\xi^\top \gb^\xholder_\xi\right\|^2
	\\&\quad\,
\underbrace{+
2\Exs_\xi \left\langle
- \eta \gb^\xholder_\xi 
,
- \eta^2\Bb_\xi \gb^\yholder_\xi
\right\rangle
+
2\Exs_\xi \left\langle
\eta \gb^\yholder_\xi
,
-\eta^2 \Bb_\xi^\top \gb^\xholder_\xi
\right\rangle
}_{\text{cross term}}
,
\end{aligned}$$
where it is again easy to verify the cross terms are zero due to the identities
$$
\Exs_\xi\left\langle
\left(\Ib - \eta^2 \Bb_\xi \Bb_\xi^\top\right)\xholder^-
,
-\eta \Bb_\xi \yholder^-
\right\rangle
	+
\Exs_\xi\left\langle
\left(\Ib - \eta^2 \Bb_\xi^\top \Bb_\xi\right)\yholder^-
,
\eta \Bb_\xi^\top \xholder^-
\right\rangle
	=
0
,
$$
$$
\Exs_\xi \left\langle
- \eta \gb^\xholder_\xi 
,
- \eta^2\Bb_\xi \gb^\yholder_\xi
\right\rangle
	+
\Exs_\xi \left\langle
\eta \gb^\yholder_\xi
,
-\eta^2 \Bb_\xi^\top \gb^\xholder_\xi
\right\rangle
	=
0
.
$$
Finally
\beq\label{SEG_tol_g}\begin{aligned}
	&\quad\,
\Exs_\xi \left[\|\xholder\|^2 + \|\yholder\|^2 \right]
	\\&=
\Exs_\xi\left\|
\left(\Ib - \eta^2 \Bb_\xi \Bb_\xi^\top\right)\xholder^-
\right\|^2
+
\Exs_\xi\left\|
-\eta \Bb_\xi \yholder^-
\right\|^2
+ 
\Exs_\xi\left\|
\left(\Ib - \eta^2 \Bb_\xi^\top \Bb_\xi\right)\yholder^-
\right\|^2
+
\Exs_\xi\left\|
\eta \Bb_\xi^\top \xholder^-
\right\|^2
	\\&\quad\,
+
\Exs_\xi \left\|- \eta \gb^\xholder_\xi\right\|^2
+
\Exs_\xi \left\|- \eta^2\Bb_\xi \gb^\yholder_\xi\right\|^2
+
\Exs_\xi \left\|\eta \gb^\yholder_\xi\right\|^2
+
\Exs_\xi \left\|-\eta^2 \Bb_\xi^\top \gb^\xholder_\xi\right\|^2
	\\&=
(\xholder^-)^\top 
\Exs \left(\Ib - \eta^2 \Bb_\xi \Bb_\xi^\top + \left(\eta^2 \Bb_\xi \Bb_\xi^\top\right)^2 \right) 
\xholder^-
+
(\yholder^-)^\top 
\Exs \left( \Ib - \eta^2 \Bb_\xi^\top \Bb_\xi + \left(\eta^2 \Bb_\xi^\top \Bb_\xi\right)^2 \right)
\yholder^-
	\\&\quad\,
+
\eta^2\Exs_\xi \left[ (\gb^\xholder_\xi)^\top \left(\Ib + \eta^2\Bb_\xi \Bb_\xi^\top \right) \gb^\xholder_\xi \right]
+
\eta^2\Exs_\xi \left[ (\gb^\yholder_\xi)^\top \left(\Ib + \eta^2\Bb_\xi^\top \Bb_\xi \right) \gb^\yholder_\xi \right]
,
\end{aligned}\eeq
and in the last equality we use the \emph{independence} assumption of $\Bb_\xi$ and $[\gb^\xholder_\xi;\gb^\yholder_\xi]$ as in Assumption~\ref{assu_boundednoise_B}, so we have since 
$
\Exs_\xi [\Bb_{\xi}^\top\Bb_{\xi}]\preceq \lambda_{\max}(\widehat{\Mb}) \Ib_m
$
and
$
\Exs_\xi [\Bb_{\xi}\Bb_{\xi}^\top]\preceq \lambda_{\max}(\Mb) \Ib_n
$
that
$$\begin{aligned}
\Exs_\xi\left[ \|\Bb_{\xi} \gb_{\xi}^\yholder\|^2 \mid \gb_{\xi} \right]
	&=
\gb_{\xi}^\yholder
\Exs_\xi [\Bb_{\xi}^\top\Bb_{\xi}]
(\gb_{\xi}^\yholder)^\top
	\le
\gb_{\xi}^\yholder\left[
\lambda_{\max}(\widehat{\Mb}) \Ib_m
\right]
(\gb_{\xi}^\yholder)^\top
	=
\lambda_{\max}(\widehat{\Mb}) \|\gb_{\xi}^\yholder\|^2
,
\end{aligned}$$
and analogously
$$\begin{aligned}
\Exs_\xi\left[ \|\Bb_{\xi}^\top \gb_{\xi}^\xholder\|^2 \mid \gb_{\xi} \right]
	&\le
\lambda_{\max}(\Mb) \|\gb_{\xi}^\xholder\|^2
,
\end{aligned}$$
so summing up the above two and taking expectation gives, due to Assumption \ref{assu_boundednoise_B},
$$\begin{aligned}
	&\quad\,
\Exs_\xi\left[ \|\Bb_{\xi} \gb_{\xi}^\yholder\|^2 \right]
+
\Exs_\xi\left[ \|\Bb_{\xi}^\top \gb_{\xi}^\xholder\|^2 \right]
	\le
\lambda_{\max}(\widehat{\Mb}) \Exs_\xi\|\gb_{\xi}^\yholder\|^2
+
\lambda_{\max}(\Mb) \Exs_\xi\|\gb_{\xi}^\xholder\|^2
	\\&\le
\left(
\lambda_{\max}(\widehat{\Mb}) \lor \lambda_{\max}(\Mb)
\right)
\Exs\left[ \|\gb_{\xi}^\yholder\|^2 + \|\gb_{\xi}^\xholder\|^2 \right]
	=
\left(
\lambda_{\max}(\widehat{\Mb}) \lor \lambda_{\max}(\Mb)
\right)
\sigma_{\gb}^2
.
\end{aligned}$$
Therefore Eq.~\eqref{SEG_tol_g} gives, for any positive $\eta$, that
$$\begin{aligned}
\Exs_\xi\left[ \|\xholder\|^2 + \|\yholder\|^2 \right]
	&=
(\xholder^-)^\top
\left( \Ib - \eta^2 \left(\Mb - \eta^2 [\Exs_\xi\Mb_{\xi}^2]\right) \right)
\xholder^-
+
(\yholder^-)^\top
\left( \Ib - \eta^2 \left(\widehat{\Mb} - \eta^2 [\Exs_\xi\widehat{\Mb}_{\xi}^2]\right) \right)
\yholder^-
	\\&\quad\,
+
\eta^2\left[1 + \eta^2\left(\lambda_{\max}(\Mb) \lor \lambda_{\max}(\widehat{\Mb})\right)\right]
\sigma_{\gb}^2
	\\&\le
\left( 1 - \eta^2 \lambdaetaprime\right) \left( \|\xholder^-\|^2 + \|\yholder^-\|^2 \right)
+
\eta^2\left[1 + \eta^2\left(\lambda_{\max}(\Mb) \lor \lambda_{\max}(\widehat{\Mb})\right)\right]
\sigma_{\gb}^2
,
\end{aligned}$$
where $\lambdaetaprime$ was earlier defined in Eq.~\eqref{lambdaeta}.
Recursively applying this allows us to conclude
$$\begin{aligned}
\Exs \left[\|\xholder_\Tholder\|^2 + \|\yholder_\Tholder\|^2\right]
	&\le
\left(1 - \eta^2 \lambdaetaprime\right)^\Tholder \left[\|\xholder_0\|^2 + \|\yholder_0\|^2 \right]
	\\&\quad\,
+
\underbrace{
\left[\sum_{t=1}^\Tholder \left(1 - \eta^2 \lambdaetaprime\right)^{t-1}\eta^2\right]
}_{\text{
$=\eta^2\Quan_{\Tholder}(\eta)$ due to Eq.~\eqref{Quandef}
}}
\left[1 + \eta^2\left(\lambda_{\max}(\Mb) \lor \lambda_{\max}(\widehat{\Mb})\right)\right]
 \sigma_{\gb}^2
,
\end{aligned}$$
and hence concludes Eq.~\eqref{xygrowth_SEGg}.
The rest of the proof, under the condition $\eta\in (0,\eta_{\Mb}]$, follows from $\lambdaetaprime$'s definition in Eq.~\eqref{lambdaeta} along with Lemma \ref{lemm_etaMbound}.
\end{proof}

\subsection{Proof of  Theorem~\ref{theo_SEG_lower_bound}}

Theorem~\ref{theo_SEG_lower_bound} is, in fact, a variant of Proposition 1 of \citet{hsieh2020explore} under our assumptions.
Hence the proof therein applies, and we provide the statement in our work mainly for the sake of completeness.

\subsection{Auxiliary Proofs}\label{sec_proof,auxiliary1}
\subsubsection{Proof of Lemma \ref{lemm_etaMbound}}\label{sec_proof,lemm_etaMbound}
\begin{proof}[Proof of Lemma~\ref{lemm_etaMbound}]
\begin{enumerate}[label=(\arabic*)]
\item
Since $\Exs_\xi\Mb_{\xi}^2 - \Mb^2 = \Exs_\xi\left(\Mb_{\xi} - \Mb\right)^2 \succeq \mathbf{0}$ a simple discriminant argument of a quadratic $1-t+t^2$, which is greater than or equal to $3/4$ for all reals $t$, concludes
$$
\Ib - \eta^2 \left(\Mb - \eta^2 [\Exs_\xi\Mb_{\xi}^2]\right)
\succeq
\Ib - \eta^2 \left(\Mb - \eta^2 \Mb^2 \right)
\succeq
\frac34 \Ib
,
$$
and
$$
\Ib - \eta^2 \left(\widehat{\Mb} - \eta^2 [\Exs_\xi\widehat{\Mb}_{\xi}^2] \right)
\succeq
\Ib - \eta^2 \left(\widehat{\Mb} - \eta^2 \widehat{\Mb}^2 \right)
\succeq
\frac34 \Ib
,
$$
and hence
$$
1 - \eta^2\lambdaetaprime
	=
\lambda_{\max}\left(\Ib - \eta^2\left(\Mb - \eta^2 [\Exs_\xi\Mb_{\xi}^2] \right) \right)
\lor
\lambda_{\max}\left(\Ib - \eta^2\left(\widehat{\Mb} - \eta^2 [\Exs_\xi\widehat{\Mb}_{\xi}^2] \right) \right)
	\ge
\frac34
,
$$
proving Eq.~\eqref{lambdaetaprime_upper}.

\item
The definition of $\eta_{\Mb}$ as in Eq.~\eqref{etaMdef} gives for all $\eta\in \left(0,\eta_{\Mb}\right]$
$$
\eta^2 \Mb^{-1/2} [\Exs_\xi\Mb_{\xi}^2] \Mb^{-1/2}						\preceq	\Ib
	\qquad\text{and}\qquad
\eta^2\widehat{\Mb}^{-1/2} [\Exs_\xi\widehat{\Mb}_{\xi}^2] \widehat{\Mb}^{-1/2}	\preceq	\Ib
,
$$
and we have
$$
\Exs_\xi \Mb_\xi^2
	\preceq
\frac{1}{\eta_{\Mb}^2} \Mb
	\qquad\text{and}\qquad
\Exs_\xi \widehat{\Mb}_\xi^2
	\preceq
\frac{1}{\eta_{\Mb}^2} \widehat{\Mb}
$$
hold, which concludes when $\eta$ satisfies $\eta\in (0,\eta_{\Mb}]$ both
$$
\Mb - \eta^2 [\Exs_\xi\Mb_{\xi}^2]
	\succeq
\left(1-\frac{\eta^2}{\eta_{\Mb}^2}\right) \Mb
	\qquad\text{and}\qquad
\widehat{\Mb} - \eta^2 [\Exs_\xi\widehat{\Mb}_{\xi}^2]
	\succeq
\left(1-\frac{\eta^2}{\eta_{\Mb}^2}\right) \widehat{\Mb}
,
$$
and hence via Eq.~\eqref{lambdaeta}
$$\begin{aligned}
\lambdaetaprime
	=
\lambda_{\min}\left(\Mb - \eta^2 [\Exs_\xi\Mb_{\xi}^2] \right) 
\land
\lambda_{\min}\left(\widehat{\Mb} - \eta^2 [\Exs_\xi\widehat{\Mb}_{\xi}^2] \right)
	\ge
\left(1-\frac{\eta^2}{\eta_{\Mb}^2}\right) \left(\lambda_{\min}(\Mb)\land\lambda_{\min}(\widehat{\Mb})\right)
	\ge
0
\end{aligned}$$
holds for all $\eta\in (0,\eta_{\Mb}]$, which proves Eq.~\eqref{lambdaetaprime_lower}.

\item
Note $\Mb = \Exs_\xi\Mb_\xi$ and $\widehat{\Mb} = \Exs_\xi\widehat{\Mb}_\xi$, and $\eta_{\Mb} > 0$ is due to the finiteness of
$
\lambda_{\max}\left(
\Mb^{-1/2} [\Exs_\xi\Mb_\xi^2] \Mb^{-1/2}
\right)
$ under Assumption \ref{assu_boundednoise_A}.
For the second inequality note by the part (1) of the proof
$$
\Mb^{-1/2} [\Exs_\xi\Mb_\xi^2] \Mb^{-1/2}
	\succeq
\Mb^{-1/2} \Mb^2 \Mb^{-1/2}
	=
\Mb
,
$$
and
$$
\widehat{\Mb}^{-1/2} [\Exs_\xi\widehat{\Mb}_\xi^2] \widehat{\Mb}^{-1/2}
	\succeq
\widehat{\Mb}^{-1/2} \widehat{\Mb}^2 \widehat{\Mb}^{-1/2}
	=
\widehat{\Mb}
,
$$
hold due to Eq.~\eqref{sigmaA2sqinit}, and it is straightforward to check that all equalities hold in the $\Bb_\xi = \Bb$ a.s.~case, proving the second inequality of Eq.~\eqref{etaMbound}.
For the third inequality, we have
$$
\Mb - \Bb \Bb^\top			=	\Exs\left[ (\Bb_\xi - \Bb) (\Bb_\xi - \Bb)^\top \right]	\succeq	0
,
$$
and
$$
\widehat{\Mb} - \Bb^\top \Bb	=	\Exs\left[ (\Bb_\xi - \Bb)^\top (\Bb_\xi - \Bb) \right]	\succeq	0
,
$$
with equality holds when $\Bb_\xi = \Bb$ a.s.
Hence
$$
\lambda_{\max}(\Mb)
	\ge
\lambda_{\max}(\Bb\Bb^\top)
	\qquad\text{and}\qquad
\lambda_{\max}(\widehat{\Mb})
	\ge
\lambda_{\max}(\Bb^\top\Bb)
.
$$
Note $\lambda_{\max}(\Bb\Bb^\top) = \lambda_{\max}(\Bb^\top\Bb)$ as indicated by Lemma \ref{lemm_spectrum} gives the third inequality and the whole lemma.
\end{enumerate}
\end{proof}

\section{TECHNICAL ANALYSIS IN \S\ref{sec_SEGg}}\label{sec_SEGg_technical}
In this section, we collects the technical analyses and proofs of our main theoretical results.
The study of SEG in general stochastic setting \S\ref{sec_SEGg} for the averaged-iterate Theorem \ref{theo_SEG_B} and restarted-averaged-iterate Theorem \ref{theo_SEG_C}.
When narrowing down to the interpolation setting in \S\ref{sec_SEGg}, we state Theorem \ref{theo_SEGg_interpolation_C}.
For each of the theorems we first detail their full versions and accompany them with proofs, separately.

\subsection{Analysis of Theorem~\ref{theo_SEG_B}}\label{sec_proof,theo_SEG_B}

\begin{custom}{Theorem \ref{theo_SEG_B}, Full Version}
Let Assumptions~\ref{assu_boundednoise_A} and \ref{assu_boundednoise_B} hold and we assume that $\lambdaetaprime > 0$.
Under the condition on step size $\eta\in (0, \eta_{\Mb}]$ where $\eta_{\Mb}$ was earlier defined as in Eq.~\eqref{etaMdef}, we have for all $\Tholder\ge 0$ the following convergence rate holds for the averaged iterate $\overline{\xholder}_{\Tholder}, \overline{\yholder}_{\Tholder}$ defined in Theorem~\ref{theo_SEG_B}:
\beq\label{Asingleloop_SEG_B_glory}\begin{aligned}
	&\quad\,
\left(
\lambda_{\min}(\Bb\Bb^\top)\left(1 + \eta^2\lambda_{\min}(\Bb\Bb^\top)\right)
-
2\eta\sigma_{\Bb}^2 \sqrt{\lambda_{\max}(\Bb^\top\Bb)}
\right)
\Exs\left[
\left\|\overline{\xholder}_{\Tholder}\right\|^2 + \left\|\overline{\yholder}_{\Tholder}\right\|^2
\right]
	\\&\le
\Exs\left[
\left\| \Bb \overline{\yholder}_{\Tholder}
+
\eta  \Mb \overline{\xholder}_{\Tholder}
\right\|^2
+
\left\| \Bb^\top \overline{\xholder}_{\Tholder}
-
\eta \widehat{\Mb} \overline{\yholder}_{\Tholder}
\right\|^2
\right]
	\\&\le
\left(
\frac{8(1+\gamma)}{\eta^2 (\Tholder+1)^2}
+
\frac{2\left(1+\frac{1}{\gamma}\right)(\sigma_{\Bb}^2 + \eta^2\sigma_{\Bb,2}^2)}{\Tholder+1}
\right)
\left[\|\xholder_0\|^2 + \|\yholder_0\|^2\right]
	\\&\quad\,
+
\frac{6(1+\gamma) + 2\left(1+\frac{1}{\gamma}\right)(\sigma_{\Bb}^2 + \eta^2\sigma_{\Bb,2}^2) \lambdaetaprime^{-1}
}{\Tholder+1}
\left[1 + \eta^2\left(\lambda_{\max}(\Mb) \lor \lambda_{\max}(\widehat{\Mb})\right)\right] \sigma_{\gb}^2
,
\end{aligned}\eeq
where $\gamma \in (0,\infty)$ is arbitrary.
In addition when $\Bb_{\xi}, \Bb$ are square matrices, we have
\beq\label{Asingleloop_SEG_B_glory2}\begin{aligned}
\Exs\left[
\|\overline{\xholder}_{\Tholder}\|^2 + \|\overline{\yholder}_{\Tholder}\|^2
\right]
	&\le
\widehat{\prefactor}_{\Tholder+1}(\eta)
\cdot
\frac{\|\xholder_0\|^2 + \|\yholder_0\|^2}{(\Tholder+1)^2}
.
\end{aligned}\eeq
In above the prefactor is defined as%
\footnote{Here we interpret $0\cdot (+\infty)$ as $+\infty$ whenever it appears.}
$$\begin{aligned}
	&\quad\,
\widehat{\prefactor}_{\Tholder+1}(\eta)
	\\&\equiv
\left\{\begin{array}{ll}
+\infty
&
\hspace{-2.7in}
\text{if \footnotesize{$\lambda_{\min}(\Bb\Bb^\top)\left(1 + \eta^2\lambda_{\min}(\Bb\Bb^\top)\right)
\le
2\eta\sigma_{\Bb}^2 \sqrt{\lambda_{\max}(\Bb^\top\Bb)}$}
}
	\\
\frac{
8(1+\gamma)
+
\left(
2\left(1+\frac{1}{\gamma}\right)\eta^2(\sigma_{\Bb}^2 + \eta^2\sigma_{\Bb,2}^2)
+
\frac{6(1+\gamma) + 2\left(1+\frac{1}{\gamma}\right)(\sigma_{\Bb}^2 + \eta^2\sigma_{\Bb,2}^2) \lambdaetaprime^{-1}}{\|\xholder_0\|^2 + \|\yholder_0\|^2}\cdot
\eta^2\left[1 + \eta^2\left(\lambda_{\max}(\Mb) \lor \lambda_{\max}(\widehat{\Mb})\right)\right] \sigma_{\gb}^2
\right)
\cdot
(\Tholder+1)
}{
\eta^2\lambda_{\min}(\Bb\Bb^\top)\left(1 + \eta^2\lambda_{\min}(\Bb\Bb^\top)\right)
-
2\eta^3\sigma_{\Bb}^2 \sqrt{\lambda_{\max}(\Bb^\top\Bb)}
}
	&
\text{otherwise}
\end{array}\right.
,
\end{aligned}$$
and by setting $\eta = \hat\eta_{\Mb}(\alpha)$ defined earlier as in Eq.~\eqref{eta_choice}, we have
\beq\label{linearized_new}\begin{aligned}
\widehat{\prefactor}_{\Tholder+1}(\hat\eta_{\Mb}(\alpha))
	&\le
\frac{8(1+\gamma)}{(1-\alpha)\lambda_{\min}(\Bb\Bb^\top)}
\cdot
\frac{1}{\hat\eta_{\Mb}(\alpha)^2}
	\\&\quad\,
+
\frac{
2\left(1+\frac{1}{\gamma}\right)(\sigma_{\Bb}^2 + \hat\eta_{\Mb}(\alpha)^2\sigma_{\Bb,2}^2)\left[
\|\xholder_0\|^2 + \|\yholder_0\|^2 + \frac{3\sigma_{\gb}^2}{\lambda_{\min}(\Mb)\land\lambda_{\min}(\widehat{\Mb})}
\right]
+
9(1+\gamma)\sigma_{\gb}^2
}{(1-\alpha)\lambda_{\min}(\Bb\Bb^\top)}\cdot
\frac{\Tholder+1}{\|\xholder_0\|^2 + \|\yholder_0\|^2}
,
\end{aligned}\eeq
which recovers Eq.~\eqref{Asingleloop_SEG_B-appendix}.
\end{custom}

\begin{proof}[Proof of Theorem~\ref{theo_SEG_B}]
First, as long as $\eta^2 \lambdaetaprime\le 1/4$ the Eq.~\eqref{xygrowth_SEGg} is further bounded as
\beq\label{lastbddmax}\begin{aligned}
	&\quad\,
\Exs\left[\|\xholder_\Tholder\|^2 + \|\yholder_\Tholder\|^2\right]
	\\&\le
\left(1 - \eta^2 \lambdaetaprime\right)^\Tholder \left[\|\xholder_0\|^2 + \|\yholder_0\|^2 \right]
+
\eta^2\Quan_{\Tholder}(\eta)
\left[1 + \eta^2\left(\lambda_{\max}(\Mb) \lor \lambda_{\max}(\widehat{\Mb})\right)\right] \sigma_{\gb}^2
.
\end{aligned}\eeq
Depending on the behavior of $\Quan_{\Tholder}(\eta)$, the expected squared Euclidean norm admits two different upper bounds:
(i) when $\lambdaetaprime$ is bounded away from 0, uniform bound holds with its limit being bounded by a quantity that is inverse proportional to $\lambdaetaprime$;
(ii) when $\lambdaetaprime$ approaches zero, the quantity eventually grows linearly at a rate that does not depend on $\lambdaetaprime$.
In our analysis we will assume that $\lambdaetaprime$ is bounded away from zero while applying two different bounds interchangeably.

Returning to the SEG update Eq.~\eqref{SEGupdate_combined} which we repeat as
\beq\tag{\ref{SEGupdate_combined}}\begin{aligned}
\xholder_t
	&=
\xholder_{t-1} - \eta^2 \Bb_{\xi,t} \Bb_{\xi,t}^\top \xholder_{t-1} - \eta\left[ \Bb_{\xi,t} \yholder_{t-1} + \gb^\xholder_{\xi,t}\right] - \eta^2 \Bb_{\xi,t} \gb^\yholder_{\xi,t}
	\\
\yholder_t
	&= 
\yholder_{t-1} - \eta^2 \Bb_{\xi,t}^\top \Bb_{\xi,t} \yholder_{t-1} + \eta\left[ \Bb_{\xi,t}^\top \xholder_{t-1} + \gb^\yholder_{\xi,t}\right] - \eta^2 \Bb_{\xi,t}^\top \gb^\xholder_{\xi,t}
.
\end{aligned}\eeq
Setting $\eta = \hat\eta_{\Mb}(\alpha)$ as in Eq.~\eqref{eta_choice} and telescoping both sides of the update rule~Eq.~\eqref{SEGupdate_combined} for $t = 1,\ldots,\Tholder$ gives
$$\begin{aligned}
\xholder_\Tholder - \xholder_0
&=
-\eta^2 \sum_{t=1}^\Tholder  \Bb_{\xi, t} \Bb_{\xi, t}^\top \xholder_{t-1} 
- \eta \sum_{t=1}^\Tholder \left[ \Bb_{\xi, t} \yholder_{t-1} + \gb^\xholder_{\xi,t}\right] 
 - \eta^2 \sum_{t=1}^\Tholder  \Bb_{\xi, t} \gb^\yholder_{\xi, t}
\\
\yholder_\Tholder - \yholder_0
&=
-
\eta^2 \sum_{t=1}^\Tholder \Bb_{\xi, t}^\top \Bb_{\xi, t} \yholder_{t-1}
+
\eta \sum_{t=1}^\Tholder \left[ \Bb_{\xi, t}^\top \xholder_{t-1} + \gb^\yholder_{\xi,t}\right] 
-
\eta^2 \sum_{t=1}^\Tholder  \Bb_{\xi, t}^\top \gb^\xholder_{\xi, t}
.
\end{aligned}$$
Manipulating gives
\beq\label{SEG_tolequality}\begin{aligned}
\frac{1}{\Tholder}\sum_{t=1}^\Tholder \Bb_{\xi, t} \yholder_{t-1}
+
\frac{\eta}{\Tholder}\sum_{t=1}^\Tholder   \Bb_{\xi, t} \Bb_{\xi, t}^\top \xholder_{t-1} 
	&=
\frac{\xholder_\Tholder - \xholder_0}{-\eta\Tholder}
- 
\frac{1}{\Tholder}\sum_{t=1}^\Tholder \gb^\xholder_{\xi,t}
 - 
\frac{\eta}{\Tholder} \sum_{t=1}^\Tholder  \Bb_{\xi, t} \gb^\yholder_{\xi, t}
,
	\\
\frac{1}{\Tholder}\sum_{t=1}^\Tholder \Bb_{\xi, t}^\top \xholder_{t-1}
-
\frac{\eta}{\Tholder} \sum_{t=1}^\Tholder \Bb_{\xi, t}^\top \Bb_{\xi, t} \yholder_{t-1}
	&=
\frac{\yholder_\Tholder - \yholder_0}{\eta\Tholder}
-
\frac{1}{\Tholder}\sum_{t=1}^\Tholder \gb^\yholder_{\xi,t}
+
\frac{\eta}{\Tholder}\sum_{t=1}^\Tholder  \Bb_{\xi, t}^\top \gb^\xholder_{\xi, t}
.
\end{aligned}
\eeq

Now we try to bound the norm of the left hands in the above two displays.
Young's inequality gives that for fixed $\gamma>0$, $\|a+b\|^2 \le (1+\gamma)\|a\|^2 + (1+\frac{1}{\gamma})\|b\|^2$ so $\|a\|^2 \ge \frac{1}{1+\gamma}\|a+b\|^2 - \frac{1}{\gamma}\|b\|^2$ holds for two vectors $a,b$ of same dimensions,
\beq\label{x_update_bound}\begin{aligned}
	&\quad\,
\Exs\left\|
\frac{1}{\Tholder}\sum_{t=1}^\Tholder \Bb_{\xi, t} \yholder_{t-1}
+
\frac{\eta}{\Tholder}\sum_{t=1}^\Tholder   \Bb_{\xi, t} \Bb_{\xi, t}^\top \xholder_{t-1} 
\right\|^2
	\\&=
\Exs\left\|
\Bb\overline{\yholder}_{\Tholder-1}
+
\eta\Mb\overline{\xholder}_{\Tholder-1} 
+
\frac{1}{\Tholder}\sum_{t=1}^\Tholder (\Bb_{\xi, t} - \Bb) \yholder_{t-1}
+
\frac{\eta}{\Tholder}\sum_{t=1}^\Tholder \left( \Bb_{\xi, t} \Bb_{\xi, t}^\top - \Mb \right)\xholder_{t-1} 
\right\|^2
	\\&\ge
\frac{1}{1+\gamma}\Exs\left\|
\Bb \overline{\yholder}_{\Tholder-1}
+
\eta  \Mb \overline{\xholder}_{\Tholder-1}
\right\|^2
-
\frac{1}{\gamma}\Exs\left\|
\frac{1}{\Tholder}\sum_{t=1}^\Tholder (\Bb_{\xi, t} - \Bb) \yholder_{t-1}
+
\frac{\eta}{\Tholder}\sum_{t=1}^\Tholder  \left( \Bb_{\xi, t}\Bb_{\xi, t}^\top - \Mb \right)\xholder_{t-1} 
\right\|^2
.
\end{aligned}\eeq
Analogously,
\beq\label{y_update_bound}\begin{aligned}
	&\quad\,
\Exs\left\|
\frac{1}{\Tholder}\sum_{t=1}^\Tholder \Bb_{\xi, t}^\top \xholder_{t-1}
-
\frac{\eta}{\Tholder} \sum_{t=1}^\Tholder \Bb_{\xi, t}^\top \Bb_{\xi, t} \yholder_{t-1}
\right\|^2 
	\\&=
\Exs\left\|
\Bb^\top\overline{\xholder}_{\Tholder-1}
-
\eta\widehat\Mb\overline{\yholder}_{\Tholder-1} 
+
\frac{1}{\Tholder}\sum_{t=1}^\Tholder (\Bb_{\xi, t} - \Bb)^\top \xholder_{t-1}
-
\frac{\eta}{\Tholder}\sum_{t=1}^\Tholder \left(\Bb_{\xi, t}^\top \Bb_{\xi, t} - \widehat\Mb\right)\yholder_{t-1} 
\right\|^2
	\\&\ge
\frac{1}{1+\gamma}\Exs\left\| \Bb^\top \overline{\xholder}_{\Tholder-1}
-
\eta \widehat{\Mb} \overline{\yholder}_{\Tholder-1}
\right\|^2
-
\frac{1}{\gamma}\Exs\left\|
\frac{1}{\Tholder}\sum_{t=1}^\Tholder (\Bb_{\xi, t} - \Bb)^\top \xholder_{t-1}
-
\frac{\eta}{\Tholder}\sum_{t=1}^\Tholder  \left( \Bb_{\xi, t}^\top \Bb_{\xi, t} - \widehat{\Mb} \right)\yholder_{t-1} 
\right\|^2
.
\end{aligned}\eeq
Combining the above two displays Eq.~\eqref{x_update_bound},~Eq.~\eqref{y_update_bound} with Eq.~\eqref{SEG_tolequality} we have
\beq\label{generalbdd}\begin{aligned}
	&\quad\,
\frac{1}{1+\gamma}\Exs\left[
\left\| \Bb \overline{\yholder}_{\Tholder-1}
+
\eta  \Mb \overline{\xholder}_{\Tholder-1}
\right\|^2
+
\left\| \Bb^\top \overline{\xholder}_{\Tholder-1}
-
\eta \widehat{\Mb} \overline{\yholder}_{\Tholder-1}
\right\|^2
\right]
	\\&\quad\,
-
\frac{1}{\gamma}\Exs\left\|
\frac{1}{\Tholder}\sum_{t=1}^\Tholder (\Bb_{\xi, t} - \Bb) \yholder_{t-1}
+
\frac{\eta}{\Tholder}\sum_{t=1}^\Tholder  \left( \Bb_{\xi, t}\Bb_{\xi, t}^\top - \Mb \right)\xholder_{t-1} 
\right\|^2
	\\&\quad\,
-
\frac{1}{\gamma}\Exs\left\|
\frac{1}{\Tholder}\sum_{t=1}^\Tholder (\Bb_{\xi, t} - \Bb)^\top \xholder_{t-1}
-
\frac{\eta}{\Tholder}\sum_{t=1}^\Tholder  \left( \Bb_{\xi, t}^\top \Bb_{\xi, t} - \widehat{\Mb} \right)\yholder_{t-1} 
\right\|^2
	\\&\le
\Exs\left\|
\frac{\xholder_\Tholder - \xholder_0}{-\eta\Tholder}
- 
\frac{1}{\Tholder}\sum_{t=1}^\Tholder \gb^\xholder_{\xi,t}
 - 
\frac{\eta}{\Tholder} \sum_{t=1}^\Tholder  \Bb_{\xi, t} \gb^\yholder_{\xi, t}
\right\|^2 
+
\Exs\left\|
\frac{\yholder_\Tholder - \yholder_0}{\eta\Tholder}
-
\frac{1}{\Tholder}\sum_{t=1}^\Tholder \gb^\yholder_{\xi,t}
+
\frac{\eta}{\Tholder}\sum_{t=1}^\Tholder  \Bb_{\xi, t}^\top \gb^\xholder_{\xi, t}
\right\|^2
	\\&\le
(1+\beta)\Exs\left\|
\frac{\xholder_\Tholder - \xholder_0}{-\eta\Tholder}
\right\|^2 
+
(1+\beta)\Exs\left\|
\frac{\yholder_\Tholder - \yholder_0}{\eta\Tholder}
\right\|^2
	\\&\quad\,
+
\left(1+\frac{1}{\beta}\right)\Exs\left\|
- 
\frac{1}{\Tholder}\sum_{t=1}^\Tholder \gb^\xholder_{\xi,t}
 - 
\frac{\eta}{\Tholder} \sum_{t=1}^\Tholder  \Bb_{\xi, t} \gb^\yholder_{\xi, t}
\right\|^2 
+
\left(1+\frac{1}{\beta}\right)\Exs\left\|
-
\frac{1}{\Tholder}\sum_{t=1}^\Tholder \gb^\yholder_{\xi,t}
+
\frac{\eta}{\Tholder}\sum_{t=1}^\Tholder  \Bb_{\xi, t}^\top \gb^\xholder_{\xi, t}
\right\|^2
,
\end{aligned}\eeq
where the last inequality is an application of Young's that involves an arbitrary fixed number $\beta\in (0,\infty)$.
The rest of this proof follows in three steps:

\begin{enumerate}[label=(\roman*)]
\item
As a first step, we have from Eq.~\eqref{lastbddmax} along with Lemma~\ref{lemm_etaMbound}
$$\begin{aligned}
	&\quad\,
\Exs\left\|
\frac{\xholder_\Tholder - \xholder_0}{-\eta\Tholder}
\right\|^2 
+
\Exs\left\|
\frac{\yholder_\Tholder - \yholder_0}{\eta\Tholder}
\right\|^2
	\\&\le
\frac{2}{\eta^2\Tholder^2}\left(\|\xholder_\Tholder\|^2 + \|\xholder_0\|^2 \right)
+
\frac{2}{\eta^2\Tholder^2}\left(\|\yholder_\Tholder\|^2 + \|\yholder_0\|^2 \right)
	\\&\le
\frac{2}{\eta^2\Tholder^2}\left(
\left(1 - \eta^2 \lambdaetaprime\right)^\Tholder \left[\|\xholder_0\|^2 + \|\yholder_0\|^2 \right]
+
\eta^2\Quan_{\Tholder}(\eta)
\left[1 + \eta^2\left(\lambda_{\max}(\Mb) \lor \lambda_{\max}(\widehat{\Mb})\right)\right] \sigma_{\gb}^2
\right)
	\\&\quad\,
+
\frac{2}{\eta^2\Tholder^2}\left[
\|\xholder_0\|^2 + \|\yholder_0\|^2
\right]
	\\&\le
\frac{2}{\eta^2\Tholder^2}\left[
2\|\xholder_0\|^2 + 2\|\yholder_0\|^2
+
\eta^2\Quan_{\Tholder}(\eta)
\left[1 + \eta^2\left(\lambda_{\max}(\Mb) \lor \lambda_{\max}(\widehat{\Mb})\right)\right] \sigma_{\gb}^2
\right]
	\\&\le
\frac{4}{\eta^2\Tholder^2}[\|\xholder_0\|^2 + \|\yholder_0\|^2]
+
\frac{2\left[1 + \eta^2\left(\lambda_{\max}(\Mb) \lor \lambda_{\max}(\widehat{\Mb})\right)\right]}{\Tholder} \sigma_{\gb}^2
.
\end{aligned}$$

\item
A second step is, due to Assumption \ref{assu_boundednoise_B}, standard $L^2$ martingale analysis gives
$$\begin{aligned}
	&\quad\,
\Exs\left\|
- 
\frac{1}{\Tholder}\sum_{t=1}^\Tholder \gb^\xholder_{\xi,t}
 - 
\frac{\eta}{\Tholder} \sum_{t=1}^\Tholder  \Bb_{\xi, t} \gb^\yholder_{\xi, t}
\right\|^2 
+
\Exs\left\|
-
\frac{1}{\Tholder}\sum_{t=1}^\Tholder \gb^\yholder_{\xi,t}
+
\frac{\eta}{\Tholder}\sum_{t=1}^\Tholder  \Bb_{\xi, t}^\top \gb^\xholder_{\xi, t}
\right\|^2
	\\&=
\frac{1}{\Tholder^2}\sum_{t=1}^\Tholder \Exs\left\|
- 
\gb^\xholder_{\xi,t}
 - 
\eta \Bb_{\xi, t} \gb^\yholder_{\xi, t}
\right\|^2 
+
\frac{1}{\Tholder^2}\sum_{t=1}^\Tholder \Exs\left\|
-
\gb^\yholder_{\xi,t}
+
\eta \Bb_{\xi, t}^\top \gb^\xholder_{\xi, t}
\right\|^2
	\\&=
\frac{1}{\Tholder}
\Exs_\xi\left\| - \gb_{\xi}^\xholder - \eta \Bb_{\xi} \gb_{\xi}^\yholder\right\|^2
+
\frac{1}{\Tholder}
\Exs_\xi\left\| - \gb_{\xi}^\yholder + \eta \Bb_{\xi}^\top \gb_{\xi}^\xholder\right\|^2
	\\&=
\frac{1}{\Tholder}
\Exs_\xi\|\gb_{\xi}^\xholder\|^2
+
\frac{\eta^2}{\Tholder}
\Exs_\xi\|\Bb_{\xi} \gb_{\xi}^\yholder\|^2
+
\frac{1}{\Tholder}
\Exs_\xi\|\gb_{\xi}^\yholder\|^2
+
\frac{\eta^2}{\Tholder}
\Exs_\xi\|\Bb_{\xi}^\top \gb_{\xi}^\xholder\|^2
	\\&\quad\,
\underbrace{
+
\frac{2\eta}{\Tholder}
\Exs_\xi\langle \gb_{\xi}^\xholder, \Bb_{\xi} \gb_{\xi}^\yholder\rangle
-
\frac{2\eta}{\Tholder}
\Exs_\xi\langle \gb_{\xi}^\yholder, \Bb_{\xi}^\top \gb_{\xi}^\xholder\rangle
}_{
\text{cross term} = 0
}
	\\&\le
\frac{1}{\Tholder}\Exs_\xi\left[
\|\gb_{\xi}^\xholder\|^2
+
\|\gb_{\xi}^\yholder\|^2
\right]
+
\frac{\eta^2}{\Tholder}\Exs_\xi\left[
\|\Bb_{\xi} \gb_{\xi}^\yholder\|^2
+
\|\Bb_{\xi}^\top \gb_{\xi}^\xholder\|^2
\right]
	\\&\le
\frac{1 + \eta^2\left(\lambda_{\max}(\Mb) \lor \lambda_{\max}(\widehat{\Mb})\right)}{\Tholder}\sigma_{\gb}^2
,
\end{aligned}$$
where a similar analysis as in the proof of Theorem~\ref{theo_SEG_A} was adopted.

\item
A third step is that, due to $\lambdaetaprime\ge 0$ of Lemma \ref{lemm_etaMbound},
$$\begin{aligned}
	&\quad\,
\Exs\left\|
\frac{1}{\Tholder}\sum_{t=1}^\Tholder (\Bb_{\xi, t} - \Bb) \yholder_{t-1}
+
\frac{\eta}{\Tholder}\sum_{t=1}^\Tholder  \left( \Bb_{\xi, t}\Bb_{\xi, t}^\top - \Mb \right)\xholder_{t-1} 
\right\|^2
	\\&\quad\,
+
\Exs\left\|
\frac{1}{\Tholder}\sum_{t=1}^\Tholder (\Bb_{\xi, t} - \Bb)^\top \xholder_{t-1}
-
\frac{\eta}{\Tholder}\sum_{t=1}^\Tholder  \left( \Bb_{\xi, t}^\top \Bb_{\xi, t} - \widehat{\Mb} \right)\yholder_{t-1} 
\right\|^2
	\\&\le
\frac{2}{\Tholder^2}\sum_{t=1}^\Tholder \Exs\left\|
(\Bb_{\xi, t} - \Bb) \yholder_{t-1}
\right\|^2
+
\frac{2\eta^2}{\Tholder^2}\sum_{t=1}^\Tholder  \Exs\left\|
\left( \Bb_{\xi, t}\Bb_{\xi, t}^\top - \Mb \right)\xholder_{t-1} 
\right\|^2
	\\&\quad\,
+
\frac{2}{\Tholder^2}\sum_{t=1}^\Tholder \Exs\left\|
(\Bb_{\xi, t} - \Bb)^\top \xholder_{t-1}
\right\|^2
+
\frac{2\eta^2}{\Tholder^2}\sum_{t=1}^\Tholder  \Exs\left\|
\left( \Bb_{\xi, t}^\top \Bb_{\xi, t} - \widehat{\Mb} \right)\yholder_{t-1} 
\right\|^2
	\\&\le
\frac{2(\sigma_{\Bb}^2 + \eta^2\sigma_{\Bb,2}^2)}{\Tholder^2}
\sum_{t=1}^\Tholder \Exs\left[ \|\xholder_{t-1}\|^2 + \|\yholder_{t-1}\|^2 \right]
	\\&\le
\frac{2(\sigma_{\Bb}^2 + \eta^2\sigma_{\Bb,2}^2)}{\Tholder^2}\sum_{t=1}^\Tholder\left(
\left(1 - \eta^2 \lambdaetaprime\right)^{t-1} \left[\|\xholder_0\|^2 + \|\yholder_0\|^2 \right]
\right.\\&\hspace{1.7in}\,\left.
+
\eta^2\Quan_{t-1}(\eta)
\left[1 + \eta^2\left(\lambda_{\max}(\Mb) \lor \lambda_{\max}(\widehat{\Mb})\right)\right] \sigma_{\gb}^2
\right)
	\\&\le
\frac{2(\sigma_{\Bb}^2 + \eta^2\sigma_{\Bb,2}^2)}{\Tholder}
\left(
\|\xholder_0\|^2 + \|\yholder_0\|^2
+
\left[1 + \eta^2\left(\lambda_{\max}(\Mb) \lor \lambda_{\max}(\widehat{\Mb})\right)\right]\lambdaetaprime^{-1}\sigma_{\gb}^2
\right)
,
\end{aligned}$$
where since $\eta^2\lambdaetaprime \in [0,1/4]$ we applied the result of Eq.~\eqref{lastbddmax}.
\end{enumerate}

Putting the above pieces together, along with Eq.~\eqref{generalbdd}, yields for any $\Tholder\ge 0$ we have%
\footnote{For simplicity we optimize the numerical constants on $\gamma$ and take $\beta=1$.}
$$\begin{aligned}
	&\quad\,
\frac{1}{1+\gamma}\Exs\left[
\left\| \Bb \overline{\yholder}_{\Tholder-1}
+
\eta  \Mb \overline{\xholder}_{\Tholder-1}
\right\|^2
+
\left\| \Bb^\top \overline{\xholder}_{\Tholder-1}
-
\eta \widehat{\Mb} \overline{\yholder}_{\Tholder-1}
\right\|^2
\right]
	\\&\le
2\Exs\left\|
\frac{\xholder_\Tholder - \xholder_0}{-\eta\Tholder}
\right\|^2 
+
2\Exs\left\|
\frac{\yholder_\Tholder - \yholder_0}{\eta\Tholder}
\right\|^2
	\\&\quad\,
+
2\Exs\left\|
- 
\frac{1}{\Tholder}\sum_{t=1}^\Tholder \gb^\xholder_{\xi,t}
 - 
\frac{\eta}{\Tholder} \sum_{t=1}^\Tholder  \Bb_{\xi, t} \gb^\yholder_{\xi, t}
\right\|^2 
+
2\Exs\left\|
-
\frac{1}{\Tholder}\sum_{t=1}^\Tholder \gb^\yholder_{\xi,t}
+
\frac{\eta}{\Tholder}\sum_{t=1}^\Tholder  \Bb_{\xi, t}^\top \gb^\xholder_{\xi, t}
\right\|^2
	\\&\quad\,
+
\frac{1}{\gamma}\Exs\left\|
\frac{1}{\Tholder}\sum_{t=1}^\Tholder (\Bb_{\xi, t} - \Bb) \yholder_{t-1}
+
\frac{\eta}{\Tholder}\sum_{t=1}^\Tholder  \left( \Bb_{\xi, t}\Bb_{\xi, t}^\top - \Mb \right)\xholder_{t-1} 
\right\|^2
	\\&\quad\,
+
\frac{1}{\gamma}\Exs\left\|
\frac{1}{\Tholder}\sum_{t=1}^\Tholder (\Bb_{\xi, t} - \Bb)^\top \xholder_{t-1}
-
\frac{\eta}{\Tholder}\sum_{t=1}^\Tholder  \left( \Bb_{\xi, t}^\top \Bb_{\xi, t} - \widehat{\Mb} \right)\yholder_{t-1} 
\right\|^2
,
\end{aligned}$$
which is further bounded by
$$\begin{aligned}
	&\quad\quad\,
\frac{8}{\eta^2\Tholder^2}[\|\xholder_0\|^2 + \|\yholder_0\|^2]
+
\frac{4\left[1 + \eta^2\left(\lambda_{\max}(\Mb) \lor \lambda_{\max}(\widehat{\Mb})\right)\right]}{\Tholder} \sigma_{\gb}^2
	\\&\quad\,
+
\frac{2\left[1 + \eta^2\left(\lambda_{\max}(\Mb) \lor \lambda_{\max}(\widehat{\Mb})\right)\right]}{\Tholder}\sigma_{\gb}^2
	\\&\quad\,
+
\frac{1}{\gamma}\cdot\frac{2(\sigma_{\Bb}^2 + \eta^2\sigma_{\Bb,2}^2)}{\Tholder}\left(
\left[\|\xholder_0\|^2 + \|\yholder_0\|^2 \right]
+
\left[1 + \eta^2\left(\lambda_{\max}(\Mb) \lor \lambda_{\max}(\widehat{\Mb})\right)\right]\lambdaetaprime^{-1}\sigma_{\gb}^2
\right)
	\\&\le
\left(
\frac{8}{\eta^2 \Tholder^2}
+
\frac{2(\sigma_{\Bb}^2 + \eta^2\sigma_{\Bb,2}^2)}{\gamma\Tholder}
\right)
\left[\|\xholder_0\|^2 + \|\yholder_0\|^2\right]
	\\&\quad\,
+
\frac{6 + \frac{2}{\gamma}(\sigma_{\Bb}^2 + \eta^2\sigma_{\Bb,2}^2) \lambdaetaprime^{-1}
}{\Tholder}
\left[1 + \eta^2\left(\lambda_{\max}(\Mb) \lor \lambda_{\max}(\widehat{\Mb})\right)\right] \sigma_{\gb}^2
,
\end{aligned}$$
and by rearranging the terms in the last display along with Eq.~\eqref{eqbdd1} we have in finale (and shifting the time index forward by one)
$$\begin{aligned}
	&\quad\,
\left(
\lambda_{\min}(\Bb\Bb^\top)\left(1 + \eta^2\lambda_{\min}(\Bb\Bb^\top)\right)
-
2\eta\sigma_{\Bb}^2 \sqrt{\lambda_{\max}(\Bb^\top\Bb)}
\right)
\Exs\left[
\left\|\overline{\xholder}_{\Tholder}\right\|^2 + \left\|\overline{\yholder}_{\Tholder}\right\|^2
\right]
	\\&\le
\Exs\left[
\left\| \Bb \overline{\yholder}_{\Tholder}
+
\eta  \Mb \overline{\xholder}_{\Tholder}
\right\|^2
+
\left\| \Bb^\top \overline{\xholder}_{\Tholder}
-
\eta \widehat{\Mb} \overline{\yholder}_{\Tholder}
\right\|^2
\right]
	\\&\le
\left(
\frac{8(1+\gamma)}{\eta^2 (\Tholder+1)^2}
+
\frac{2\left(1+\frac{1}{\gamma}\right)(\sigma_{\Bb}^2 + \eta^2\sigma_{\Bb,2}^2)}{\Tholder+1}
\right)
\left[\|\xholder_0\|^2 + \|\yholder_0\|^2\right]
	\\&\quad\,
+
\frac{6(1+\gamma) + 2\left(1+\frac{1}{\gamma}\right)(\sigma_{\Bb}^2 + \eta^2\sigma_{\Bb,2}^2) \lambdaetaprime^{-1}
}{\Tholder+1}
\left[1 + \eta^2\left(\lambda_{\max}(\Mb) \lor \lambda_{\max}(\widehat{\Mb})\right)\right] \sigma_{\gb}^2
,
\end{aligned}$$
where we used the iterated laws of expectation at multiple occasions, as well as the property of $L^2$ martingale differences as well as the definitions Eq.~\eqref{sigmaAsq} and Eq.~\eqref{sigmaA2sqinit} in Assumption \ref{assu_boundednoise_A}.
This concludes Eq.~\eqref{Asingleloop_SEG_B_glory}.
The rest of the proof sits upon the application of Lemma \ref{lemm_etaMbound}, esp.~Eq.~\eqref{lambdaetaprime_lower} and the fact that
$
1 + \hat\eta_{\Mb}(\alpha)^2\left(\lambda_{\max}(\Mb) \lor \lambda_{\max}(\widehat{\Mb})\right)
\le
\frac32
$, concluding the whole proof of Theorem \ref{theo_SEG_B}.

\end{proof}

\paragraph{Discussion}\label{rema_K2inv_upper}
We remark that the magnitude of $\Quan_{\Tholder+1}(\eta)$ can be either $O(1)$ or $O(\Tholder)$, depending on whether $\lambdaetaprime$ is bounded away from zero or sufficiently close to zero.
When applying iteration average, one needs to maximize the step size to achieve a sharp bound in which case it is sufficient to replace $\Quan_{\Tholder+1}(\eta)$ in the bound by $\Tholder+1$ instead of $\frac{1}{\eta^2 \lambdaetaprime}$.
In special, the dependency on $\left[\|\xholder_0\|^2 + \|\yholder_0\|^2\right]$ can be improved to $O(\frac{1}{(\Tholder+1)^2})$ if we adopt the $\frac{1}{\eta^2 \lambdaetaprime}$ bound for $\Quan_{\Tholder+1}(\eta)$, achieving
\beq\label{K2inv_upper}\begin{aligned}
	&\quad\,
(1-\alpha)\lambda_{\min}(\Bb\Bb^\top)\left(1 + \eta^2\lambda_{\min}(\Bb\Bb^\top)\right)
\Exs\left[
\left\|\overline{\xholder}_{\Tholder}\right\|^2 + \left\|\overline{\yholder}_{\Tholder}\right\|^2
\right]
	\\&\le
\frac{16 + 4(\sigma_{\Bb}^2 + \eta^2\sigma_{\Bb,2}^2)\lambdaetaprime^{-1}}{\eta^2 (\Tholder+1)^2}
\left[\|\xholder_0\|^2 + \|\yholder_0\|^2\right]
	\\&\quad\,
+
\frac{12 + 4(\sigma_{\Bb}^2 + \eta^2\sigma_{\Bb,2}^2) \lambdaetaprime^{-1}
}{\Tholder+1}
\left[1 + \eta^2\left(\lambda_{\max}(\Mb) \lor \lambda_{\max}(\widehat{\Mb})\right)\right] \sigma_{\gb}^2
,
\end{aligned}\eeq
which recovers \eqref{Asingleloop_SEG_B}.
In the upcoming technical analysis for restarting, we do not, however, utilize this upper bound.

\subsection{Analysis of Theorem~\ref{theo_SEG_C}}\label{sec_proof,theo_SEG_C}
\begin{custom}{Theorem \ref{theo_SEG_C}, Full Version}
Under Assumptions~\ref{assu_boundednoise_A} and \ref{assu_boundednoise_B} and assume that $\Bb_{\xi}, \Bb$ are square matrices, we apply restarting at $\epoch=1,2,\dots,\Epoch$ after $\Tholder \ge \Tholder_{\epoch}$ steps where
\beq\label{Tholder_epoch_prime}
\begin{aligned}
&
\Tholder_{\epoch}
	=
\left\lceil
\frac{q_2 + \sqrt{q_2^2 + 4q_1q_3}}{2q_3}
\right\rceil - 1
,
\end{aligned}\eeq
with
\beq\label{q1q2q3}\begin{aligned}
q_1
	&\equiv
\frac{16}{(1-\alpha)\lambda_{\min}(\Bb\Bb^\top)}
\cdot
\frac{e^{2-2\epoch}[\|\xholder_0\|^2 + \|\yholder_0\|^2]}{\hat\eta_{\Mb}(\alpha)^2}
\\
q_2
	&\equiv
\frac{
4(\sigma_{\Bb}^2 + \hat\eta_{\Mb}(\alpha)^2\sigma_{\Bb,2}^2)\left[
e^{2-2\epoch}[\|\xholder_0\|^2 + \|\yholder_0\|^2] + \frac{3\sigma_{\gb}^2}{\lambda_{\min}(\Mb)\land\lambda_{\min}(\widehat{\Mb})}
\right]
+
18\sigma_{\gb}^2
}{(1-\alpha)\lambda_{\min}(\Bb\Bb^\top)}
\\
q_3
	&\equiv
\frac{\|\xholder_0\|^2 + \|\yholder_0\|^2}{e^{2\epoch}}
,
\end{aligned}\eeq
where we denote
$$
\Epoch
=
\left\lceil 
\dfrac12 \log\left(
\frac{\lambda_{\min}(\Mb)\land\lambda_{\min}(\widehat{\Mb})}{3\sigma_{\gb}^2}
[\|\xholder_0\|^2 + \|\yholder_0\|^2]
\right)
\right\rceil
.
$$
Then for $\widehat{\Epoch}=1,\dots,\Epoch$ where $\Tholder = \sum_{\epoch=1}^{\widehat{\Epoch}} \Tholder_{\epoch}$ the iteration has the \emph{expected squared Euclidean metric} that is discounted by a factor of $1/e^{2\widehat{\Epoch}}$:
$$\begin{aligned}
\Exs\left[\|\hat{\xholder}_{\Tholder}\|^2 + \|\hat{\yholder}_{\Tholder}\|^2\right]
	\le
\frac{1}{e^{2\widehat{\Epoch}}}\left[ \|\xholder_0\|^2 + \|\yholder_0\|^2\right]
,
\end{aligned}$$
and for $\Tholder = \sum_{\epoch=1}^{\Epoch} \Tholder_{\epoch} + \hat\Tholder$, $\hat\Tholder=0,1,\dots$ where SEG with aforementioned restarting and (tail-) iteration averaging achieves
\beq\label{Asingleloop_SEG_C_glory}\begin{aligned}
\Exs\left[\|\bar{\xholder}_{\Tholder}\|^2 + \|\bar{\yholder}_{\Tholder}\|^2\right]
	&\le
\frac{16}{(1-\alpha)\lambda_{\min}(\Bb\Bb^\top)}
\cdot
\frac{\frac{3\sigma_{\gb}^2}{\lambda_{\min}(\Mb)\land\lambda_{\min}(\widehat{\Mb})}}{\hat\eta_{\Mb}(\alpha)^2(\hat\Tholder+1)^2}
	\\&\quad\,
+
\frac{
4(\sigma_{\Bb}^2 + \hat\eta_{\Mb}(\alpha)^2\sigma_{\Bb,2}^2)\cdot
\frac{6\sigma_{\gb}^2}{\lambda_{\min}(\Mb)\land\lambda_{\min}(\widehat{\Mb})} 
+
18\sigma_{\gb}^2
}{(1-\alpha)\lambda_{\min}(\Bb\Bb^\top)}
\cdot
\frac{1}{\hat\Tholder+1}
	\\&\hspace{-1.5in}
=\left[
1 + \frac{
\frac{8}{3(\hat\Tholder+1)}
+
O(\hat\eta_{\Mb}(\alpha)^2\sigma_{\Bb}^2 + \hat\eta_{\Mb}(\alpha)^4\sigma_{\Bb,2}^2)
}{
\hat\eta_{\Mb}(\alpha)^2\left(\lambda_{\min}(\Mb)\land\lambda_{\min}(\widehat{\Mb})\right)
}
\right]
\cdot
\frac{18\sigma_{\gb}^2}{(1-\alpha)\lambda_{\min}(\Bb\Bb^\top)}
\cdot
\frac{1}{\hat\Tholder+1}
,
\end{aligned}\eeq
which recovers Eq.~\eqref{Asingleloop_SEG_C}.
\end{custom}

\begin{proof}[Proof of Theorem~\ref{theo_SEG_C}]
Without loss of generality we consider the first epoch initialized at $\xholder_0, \yholder_0$.
Recall from Eq.~\eqref{Asingleloop_SEG_B} we have
\beq\tag{\ref{Asingleloop_SEG_B}}\begin{aligned}
\Exs\left[
\|\bar{\xholder}_{\Tholder}\|^2 + \|\bar{\yholder}_{\Tholder}\|^2
\right]
	&\le
\frac{16}{(1-\alpha)\lambda_{\min}(\Bb\Bb^\top)}
\cdot
\frac{\|\xholder_0\|^2 + \|\yholder_0\|^2}{\hat\eta_{\Mb}(\alpha)^2(\Tholder+1)^2}
	\\&\quad\,
\hspace{-1.5in}
+
\frac{
4(\sigma_{\Bb}^2 + \hat\eta_{\Mb}(\alpha)^2\sigma_{\Bb,2}^2)\left[
\|\xholder_0\|^2 + \|\yholder_0\|^2 + \frac{3\sigma_{\gb}^2}{\lambda_{\min}(\Mb)\land\lambda_{\min}(\widehat{\Mb})}
\right]
+
18\sigma_{\gb}^2
}{(1-\alpha)\lambda_{\min}(\Bb\Bb^\top)}
\cdot
\frac{1}{\Tholder+1}
,
\end{aligned}\eeq
so after $\Tholder$ steps the iteration has a \emph{squared Euclidean metric} that is discounted by a factor of $1/e^2$, in the sense that the right hand of the above display is $\le \frac{1}{e^2}\left[ \|\xholder_0\|^2 + \|\yholder_0\|^2 \right]$.
This reduces to finding the solutions to the quadratic inequality
\beq\label{quadratic}
\frac{q_1}{(\Tholder+1)^2} + \frac{q_2}{\Tholder+1} - q_3 \le 0
,
\eeq
where $q_1,q_2,q_3$ were earlier defined as in Eq.~\eqref{q1q2q3} in the $\epoch=1$ case, repeated as
$$\begin{aligned}
q_1
	&\equiv
\frac{16}{(1-\alpha)\lambda_{\min}(\Bb\Bb^\top)}
\cdot
\frac{\|\xholder_0\|^2 + \|\yholder_0\|^2}{\hat\eta_{\Mb}(\alpha)^2}
\\
q_2
	&\equiv
\frac{
4(\sigma_{\Bb}^2 + \hat\eta_{\Mb}(\alpha)^2\sigma_{\Bb,2}^2)\left[
\|\xholder_0\|^2 + \|\yholder_0\|^2 + \frac{3\sigma_{\gb}^2}{\lambda_{\min}(\Mb)\land\lambda_{\min}(\widehat{\Mb})}
\right]
+
18\sigma_{\gb}^2
}{(1-\alpha)\lambda_{\min}(\Bb\Bb^\top)}
\\
q_3
	&\equiv
\frac{\|\xholder_0\|^2 + \|\yholder_0\|^2}{e^2}
.
\end{aligned}$$
The root formula gives (and omitting the infeasible solutions)
$$
\frac{1}{\Tholder+1} \le \frac{-q_2 + \sqrt{q_2^2 + 4q_1q_3}}{2q_1}
\quad\text{or equivalently}\quad
\Tholder
\ge
\left\lceil
\frac{q_2 + \sqrt{q_2^2 + 4q_1q_3}}{2q_3}
\right\rceil - 1
.
$$
This gives the epoch number Eq.~\eqref{Tholder_epoch_prime}.

To get a sensible bound on time complexity, instead of solving the quadratic formula Eq.~\eqref{quadratic} we instead consider to upper bound of $\Tholder_\epoch$ and its summation $\sum_{\epoch=1}^{\Epoch} \Tholder_{\epoch}$.
Recall first we set $\eta = \hat\eta_{\Mb}(\alpha)$ defined earlier as in Eq.~\eqref{eta_choice}.
From time to time we omit the ceilings for simplicity (which does not affect the magnitude as the terms grow large).
Since $\Bb_{\xi}$ is a square matrix, we set $\eta = \eta_{\Mb}$ as in~Eq.~\eqref{eta_choice} again and apply the following restarting schedule: 
run SEG at $\epoch=1,2,\dots$ for an iteration number of $\Tholder_{\epoch}$ defined as in Eq.~\eqref{Tholder_epoch_prime}, one can upper bound the iterate number at $\epoch=1,2,\dots$ a maximal over two terms where
\beq\tag{\ref{q1q2q3}}\begin{aligned}
q_1	&\equiv
\frac{16}{(1-\alpha)\lambda_{\min}(\Bb\Bb^\top)}
\cdot
\frac{e^{2-2\epoch}[\|\xholder_0\|^2 + \|\yholder_0\|^2]}{\hat\eta_{\Mb}(\alpha)^2}
\\
q_2	&\equiv
\frac{
4(\sigma_{\Bb}^2 + \hat\eta_{\Mb}(\alpha)^2\sigma_{\Bb,2}^2)\left[
e^{2-2\epoch}[\|\xholder_0\|^2 + \|\yholder_0\|^2] + \frac{3\sigma_{\gb}^2}{\lambda_{\min}(\Mb)\land\lambda_{\min}(\widehat{\Mb})}
\right]
+
18\sigma_{\gb}^2
}{(1-\alpha)\lambda_{\min}(\Bb\Bb^\top)}
\\
q_3	&\equiv
\frac{\|\xholder_0\|^2 + \|\yholder_0\|^2}{e^{2\epoch}}
,
\end{aligned}\eeq
which allows the following bound%
\footnote{Note it is easy to verify $
\left\lceil
\frac{q_2 + \sqrt{q_2^2 + 4q_1q_3}}{2q_3}
\right\rceil - 1
	\le
\max\left(\frac{2q_2}{q_3}, \sqrt{\frac{2q_1}{q_3}}\right)
	\le
\frac{2q_2}{q_3} + \sqrt{\frac{2q_1}{q_3}}
$
by considering the two cases of $\frac{2q_2}{q_3}\le \sqrt{\frac{2q_1}{q_3}}$ and $\frac{2q_2}{q_3} \ge \sqrt{\frac{2q_1}{q_3}}$, separately.}
$$\begin{aligned}
	&\quad\,
\Tholder_{\operatorname{complexity}}
	\le
\sum_{\epoch=1}^{\Epoch} \left[\frac{2q_2}{q_3} + \sqrt{\frac{2q_1}{q_3}}\right]
	\\&\le
\sum_{\epoch=1}^{\Epoch}
\left[
\frac{4e^2(\sigma_{\Bb}^2 + \hat\eta_{\Mb}(\alpha)^2\sigma_{\Bb,2}^2)}{(1-\alpha)\lambda_{\min}(\Bb\Bb^\top)}
+
\frac{
18\sigma_{\gb}^2
+
4(\sigma_{\Bb}^2 + \hat\eta_{\Mb}(\alpha)^2\sigma_{\Bb,2}^2) \frac{3\sigma_{\gb}^2}{\lambda_{\min}(\Mb)\land\lambda_{\min}(\widehat{\Mb})}
}{
(1-\alpha)\lambda_{\min}(\Bb\Bb^\top)[\|\xholder_0\|^2 + \|\yholder_0\|^2]
}
\cdot e^{2\epoch}
\right]
	\\&\quad\,
+
\sum_{\epoch=1}^{\Epoch}
\sqrt{
\frac{16e^2}{(1-\alpha) \hat\eta_{\Mb}(\alpha)^2 \lambda_{\min}(\Bb\Bb^\top)}
}
,
\end{aligned}$$
which is further bounded by
\def\LOG{\textsf{LOG}\xspace}
$$\begin{aligned}
	&\quad\,
\Tholder_{\operatorname{complexity}}
	\le
\left(
\sqrt{
\frac{16e^2}{(1-\alpha) \hat\eta_{\Mb}(\alpha)^2 \lambda_{\min}(\Bb\Bb^\top)}
}
	+
\frac{4e^2(\sigma_{\Bb}^2 + \hat\eta_{\Mb}(\alpha)^2\sigma_{\Bb,2}^2)}{(1-\alpha)\lambda_{\min}(\Bb\Bb^\top)}
\right)
	\cdot
\Epoch
	\\&\quad\,
+
\frac{
18\sigma_{\gb}^2
+
4(\sigma_{\Bb}^2 + \hat\eta_{\Mb}(\alpha)^2\sigma_{\Bb,2}^2) \frac{3\sigma_{\gb}^2}{\lambda_{\min}(\Mb)\land\lambda_{\min}(\widehat{\Mb})}
}{
(1-\alpha)\lambda_{\min}(\Bb\Bb^\top)[\|\xholder_0\|^2 + \|\yholder_0\|^2]
}
\cdot 
\sum_{\epoch=1}^{\Epoch}
e^{2\epoch}
	\\&=
\left(
\sqrt{
\frac{16e^2}{(1-\alpha) \hat\eta_{\Mb}(\alpha)^2 \lambda_{\min}(\Bb\Bb^\top)}
}
	+
\frac{4e^2(\sigma_{\Bb}^2 + \hat\eta_{\Mb}(\alpha)^2\sigma_{\Bb,2}^2)}{(1-\alpha)\lambda_{\min}(\Bb\Bb^\top)}
\right)
	\cdot
\left\lceil 
\dfrac12 \LOG
\right\rceil
	\\&\quad\,
+
\frac{
6(\lambda_{\min}(\Mb)\land\lambda_{\min}(\widehat{\Mb}))
+
4(\sigma_{\Bb}^2 + \hat\eta_{\Mb}(\alpha)^2\sigma_{\Bb,2}^2)
}{
(1-\alpha)\lambda_{\min}(\Bb\Bb^\top)[\|\xholder_0\|^2 + \|\yholder_0\|^2]
}
\cdot 
\frac{\|\xholder_0\|^2 + \|\yholder_0\|^2}{1 - e^{-2}}
,
\end{aligned}$$
where
$$
\LOG
\equiv
\log\left(
\frac{\lambda_{\min}(\Mb)\land\lambda_{\min}(\widehat{\Mb})}{3\sigma_{\gb}^2}
[\|\xholder_0\|^2 + \|\yholder_0\|^2]
\right)
,
$$
and we applied the fact
$$
\sum_{\epoch=1}^{\Epoch} e^{2\epoch} 
	=
\frac{e^{2\Epoch-2} - 1}{1 - e^{-2}}
	\le
\frac{\|\xholder_0\|^2 + \|\yholder_0\|^2}{1 - e^{-2}}\cdot
\frac{\lambda_{\min}(\Mb)\land\lambda_{\min}(\widehat{\Mb})}{3\sigma_{\gb}^2}
,
$$
hence concluding Eq.~\eqref{Tcomp_prime} and the theorem.
\end{proof}

\subsection{Analysis of Theorem \ref{theo_SEGg_interpolation_C}}\label{sec_proof,theo_SEGg_interpolation_C}

\begin{custom}{Theorem \ref{theo_SEGg_interpolation_C}, Full Version}
Let Assumptions~\ref{assu_boundednoise_A} and \ref{assu_boundednoise_B} hold with $\sigma_{\gb} = 0$.
When $\Bb_{\xi}, \Bb$ are square matrices, for any prescribed $\alpha\in (0,1)$ choosing the step size $\eta = \bar\eta_{\Mb}(\alpha)$ defined as in Eq.~\eqref{eta_choice}, for an iteration number of $\Tholder \ge \Tholder_{\operatorname{thres}}(\alpha)$ defined as in Eq.~\eqref{Tholder_epoch}.
Then we have for all $\Tholder \ge 1$ that is divisible by $\Tholder_{\operatorname{thres}}(\alpha)$ the following convergence rate for $\hat\xholder_{\Tholder}, \hat\yholder_{\Tholder}$ (outputs of Algorithm \ref{algo_iasgd_restart}) holds
\beq\label{Asingleloop_SEG_interpolation_C_glory}
\Exs \left[\|\hat\xholder_{\Tholder}\|^2 + \|\hat\yholder_{\Tholder}\|^2\right]
\le
\exp\left(-\frac{2}{\Tholder_{\operatorname{thres}}(\alpha)}\cdot \Tholder\right) \left[\|\xholder_0\|^2 + \|\yholder_0\|^2\right]
.
\eeq
In above we adopt the fine-grained iteration number per epoch
\beq\tag{\ref{Tholder_epoch}'}
\footnotesize{
\Tholder_{\operatorname{thres}}(\alpha)
\equiv
\left(
\frac{4}{
\sqrt{2\bar\eta_{\Mb}(\alpha)^2\left(\sigma_{\Bb}^2 + \bar\eta_{\Mb}(\alpha)^2 \sigma_{\Bb,2}^2 \right) + 8\RATE}
-
\sqrt{2\bar\eta_{\Mb}(\alpha)^2\left(\sigma_{\Bb}^2 + \bar\eta_{\Mb}(\alpha)^2 \sigma_{\Bb,2}^2 \right)}
}
\right)^2
}
,
\eeq
where
\beq\label{Tholder_epoch_rhs}
\RATE
	\equiv
\sqrt{\frac{(1-\alpha)\bar\eta_{\Mb}(\alpha)^2\lambda_{\min}(\Bb\Bb^\top)}{e^2}
}
.
\eeq
\end{custom}
In the regime of $\sigma_{\Bb}, \sigma_{\Bb,2}\to 0^+$ we do asymptotic expansion and get%
\footnote{We used the Taylor's asymptotic expansion $
(\sqrt{x+a} - \sqrt{x})^2
=
a(\sqrt{1+x/a} - \sqrt{x/a})^2
=
a - O(\sqrt{ax})
$ as $x\to 0^+$ for fixed positive $a$.}
$$\begin{aligned}
&\lefteqn{
\frac{2}{\Tholder_{\operatorname{thres}}}
    =
\frac18\left(
\sqrt{2\bar\eta_{\Mb}(\alpha)^2\left(\sigma_{\Bb}^2 + \bar\eta_{\Mb}(\alpha)^2 \sigma_{\Bb,2}^2 \right) + 8\RATE}
-
\sqrt{2\bar\eta_{\Mb}(\alpha)^2\left(\sigma_{\Bb}^2 + \bar\eta_{\Mb}(\alpha)^2 \sigma_{\Bb,2}^2 \right)}
\right)^2
}
	\\&=
\frac{1}{e}\sqrt{(1-\alpha)\bar\eta_{\Mb}(\alpha)^2\lambda_{\min}(\Bb\Bb^\top)}
-
O\left(
\sqrt[4]{\bar\eta_{\Mb}(\alpha)^2\lambda_{\min}(\Bb\Bb^\top)}
\cdot
\sqrt{\bar\eta_{\Mb}(\alpha)^2\sigma_{\Bb}^2 + \bar\eta_{\Mb}(\alpha)^4\sigma_{\Bb,2}^2}
\right)
,
\end{aligned}$$
which along with Eq.~\eqref{Asingleloop_SEG_interpolation_C_glory} gives Eq.~\eqref{Asingleloop_SEG_interpolation_C} of Theorem \ref{theo_SEGg_interpolation_C}.

Now for the interpolation setting, in the case of square matrices $\Bb_{\xi}, \Bb$, we turn to consider the convergence rate of SEG with iteration averaging, stating the following lemma:

\begin{lemma}\label{theo_SEGg_interpolation_B}
Let Assumptions~\ref{assu_boundednoise_A} and \ref{assu_boundednoise_B} hold with $\sigma_{\gb} = 0$.
Under the condition on step size $\eta\in (0, \eta_{\Mb}]$ where $\eta_{\Mb}$ was earlier defined as in Eq.~\eqref{etaMdef}, we conclude for all $\Tholder\ge 0$ the following convergence rate for $\overline{\xholder}_{\Tholder}, \overline{\yholder}_{\Tholder}$ holds
\beq\label{Asingleloop_SEG_interpolation_B_glory}\begin{aligned}
	&\quad\,
\left(
\lambda_{\min}(\Bb\Bb^\top)\left(1 + \eta^2\lambda_{\min}(\Bb\Bb^\top)\right)
-
2\eta\sigma_{\Bb}^2 \sqrt{\lambda_{\max}(\Bb^\top\Bb)}
\right)
\Exs\left[
\left\|\overline{\xholder}_{\Tholder}\right\|^2 + \left\|\overline{\yholder}_{\Tholder}\right\|^2
\right]
	\\&\le
\Exs\left[
\left\| \Bb \overline{\yholder}_{\Tholder}		+	\eta  \Mb \overline{\xholder}_{\Tholder} \right\|^2
+
\left\| \Bb^\top \overline{\xholder}_{\Tholder}	-	\eta \widehat{\Mb} \overline{\yholder}_{\Tholder} \right\|^2
\right]
	\\&\le
\left(\frac{2}{\eta} + \sqrt{2(\sigma_{\Bb}^2 + \eta^2 \sigma_{\Bb,2}^2)\Quan_{\Tholder+1}(\eta)}\right)^2
\frac{\|\xholder_0\|^2 + \|\yholder_0\|^2}{(\Tholder+1)^2}
.
\end{aligned}\eeq
In addition when $\Bb_{\xi}, \Bb$ are square matrices, we have
\beq\label{Asingleloop_SEG_interpolation_B_glory2}\begin{aligned}
\Exs\left[
\|\overline{\xholder}_{\Tholder}\|^2 + \|\overline{\yholder}_{\Tholder}\|^2
\right]
	&\le
\prefactor_{\Tholder+1}(\eta)
\cdot
\frac{\|\xholder_0\|^2 + \|\yholder_0\|^2}{(\Tholder+1)^2}
.
\end{aligned}\eeq
In above the prefactor is defined as%
\footnote{Here we interpret $0\cdot (+\infty)$ as $+\infty$ whenever it occurs.}
$$
\prefactor_{\Tholder+1}(\eta)
	\equiv
\left\{\begin{array}{ll}
+\infty
&
\hspace{-1.5in}
\text{if \footnotesize{$\lambda_{\min}(\Bb\Bb^\top)\left(1 + \eta^2\lambda_{\min}(\Bb\Bb^\top)\right)
\le
2\eta\sigma_{\Bb}^2 \sqrt{\lambda_{\max}(\Bb^\top\Bb)}$}
}
	\\
\frac{
\left(2 + \sqrt{2\eta^2\left(\sigma_{\Bb}^2 + \eta^2 \sigma_{\Bb,2}^2 \right)\Quan_{\Tholder+1}(\eta)}\right)^2
}{
\eta^2\lambda_{\min}(\Bb\Bb^\top)\left(1 + \eta^2\lambda_{\min}(\Bb\Bb^\top)\right)
-
2\eta^3\sigma_{\Bb}^2 \sqrt{\lambda_{\max}(\Bb^\top\Bb)}
}
	&
\text{otherwise}
\end{array}\right.
,
$$
where $\Quan_{\Tholder}(\eta)$ was earlier defined as in Eq.~\eqref{Quandef}, and by setting $\eta$ as
\beq\tag{\ref{eta_choice}}\begin{aligned}
\bar\eta_{\Mb}(\alpha)
	&\equiv
\eta_{\Mb}
\land
\frac{\alpha\lambda_{\min}(\Bb\Bb^\top)}{2\sigma_{\Bb}^2 \sqrt{\lambda_{\max}(\Bb^\top\Bb)}}
,
\end{aligned}\eeq
we have
\beq\label{linearized}\begin{aligned}
\prefactor_{\Tholder+1}(\bar\eta_{\Mb}(\alpha))
	&\le
\frac{2}{(1-\alpha)\lambda_{\min}(\Bb\Bb^\top)}
\left(
\sqrt{\frac{2}{\bar\eta_{\Mb}(\alpha)^2}}
+
\sqrt{\left(\sigma_{\Bb}^2 + \bar\eta_{\Mb}(\alpha)^2 \sigma_{\Bb,2}^2\right)(\Tholder+1)}
\right)^2
.
\end{aligned}\eeq
\end{lemma}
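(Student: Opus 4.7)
The plan is to adapt the proof of Theorem \ref{theo_SEG_B} to the noiseless setting $\sigma_{\gb}=0$, exploiting the fact that the absence of a stochastic intercept lets me skip the Young-inequality split with parameter $\gamma$ and instead apply Minkowski's inequality directly in $L^2$. This is precisely what produces the characteristic $(2/\eta + \sqrt{\cdots})^2$ structure in Eq.~\eqref{Asingleloop_SEG_interpolation_B_glory} rather than the $(1+\gamma)$/$(1+1/\gamma)$ split of Eq.~\eqref{Asingleloop_SEG_B_glory}.

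First, I would telescope the combined SEG recursion Eq.~\eqref{SEGupdate_combined} (with $\gb_{\xi,t}=\mathbf{0}$) over $t=1,\dots,\Tholder+1$ to get
\[
\frac{\xholder_0 - \xholder_{\Tholder+1}}{\eta(\Tholder+1)}
= \Bb\,\overline{\yholder}_{\Tholder} + \eta\,\Mb\,\overline{\xholder}_{\Tholder} + M_\xholder,
\qquad
\frac{\yholder_{\Tholder+1} - \yholder_0}{\eta(\Tholder+1)}
= \Bb^\top \overline{\xholder}_{\Tholder} - \eta\,\widehat{\Mb}\,\overline{\yholder}_{\Tholder} + M_\yholder,
\]
where $M_\xholder,M_\yholder$ are martingale-difference sums built from $\Bb_{\xi,t}-\Bb$ and $\Bb_{\xi,t}\Bb_{\xi,t}^{\top}-\Mb$ (resp.\ $\Bb_{\xi,t}^{\top}\Bb_{\xi,t}-\widehat{\Mb}$). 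Applying Minkowski's inequality in $L^2$ to these two identities then yields
\[
\sqrt{\Exs\bigl[\|\Bb\,\overline{\yholder}_{\Tholder} + \eta\,\Mb\,\overline{\xholder}_{\Tholder}\|^2 + \|\Bb^\top\overline{\xholder}_{\Tholder} - \eta\,\widehat{\Mb}\,\overline{\yholder}_{\Tholder}\|^2\bigr]}\le D_1 + D_2,
\]
with $D_1$ bounding the telescoped difference and $D_2$ the noise.

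For $D_1$, I would invoke Theorem~\ref{theo_SEG_A} with $\sigma_{\gb}=0$ to conclude $\Exs[\|\xholder_{\Tholder+1}\|^2+\|\yholder_{\Tholder+1}\|^2]\le \|\xholder_0\|^2+\|\yholder_0\|^2$, which combined with $\|a-b\|_{L^2}\le\|a\|_{L^2}+\|b\|_{L^2}$ gives $D_1\le \frac{2}{\eta(\Tholder+1)}\sqrt{\|\xholder_0\|^2+\|\yholder_0\|^2}$. For $D_2$, martingale orthogonality reduces $\Exs\|M_\xholder\|^2+\Exs\|M_\yholder\|^2$ to a sum of conditional per-step variances; Assumption~\ref{assu_boundednoise_A} controls each diagonal term, and after absorbing the residual bilinear cross term between the $(\Bb_\xi-\Bb)$- and $(\Bb_\xi\Bb_\xi^\top-\Mb)$-type fluctuations via $|2uv|\le u^2+v^2$ (which is the source of the factor $\sqrt{2}$ in the stated bound), the per-step bound reads $2(\sigma_{\Bb}^2+\eta^2\sigma_{\Bb,2}^2)(\Exs\|\xholder_{t-1}\|^2+\Exs\|\yholder_{t-1}\|^2)$. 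Telescoping $\sum_{t=0}^{\Tholder}\Exs[\|\xholder_t\|^2+\|\yholder_t\|^2]$ via Theorem~\ref{theo_SEG_A} into $\Quan_{\Tholder+1}(\eta)(\|\xholder_0\|^2+\|\yholder_0\|^2)$ then produces $D_2\le \frac{\sqrt{2(\sigma_{\Bb}^2+\eta^2\sigma_{\Bb,2}^2)\Quan_{\Tholder+1}(\eta)}}{\Tholder+1}\sqrt{\|\xholder_0\|^2+\|\yholder_0\|^2}$. Squaring the Minkowski bound delivers Eq.~\eqref{Asingleloop_SEG_interpolation_B_glory}.

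The final step is to turn the bound on the ``preconditioned'' norm into one on $\|\overline{\xholder}_{\Tholder}\|^2+\|\overline{\yholder}_{\Tholder}\|^2$. Expanding both squares, the diagonal part dominates $\lambda_{\min}(\Bb\Bb^\top)(1+\eta^2\lambda_{\min}(\Bb\Bb^\top))(\|\overline{\xholder}_{\Tholder}\|^2+\|\overline{\yholder}_{\Tholder}\|^2)$ via $\Mb\succeq\Bb\Bb^\top$ and $\widehat{\Mb}\succeq\Bb^\top\Bb$ (valid in the square-matrix case), while the cross term $2\eta\,\overline{\xholder}_{\Tholder}^{\top}(\Mb\Bb - \Bb\widehat{\Mb})\overline{\yholder}_{\Tholder}$ is controlled via the commutator identity
\[
\Mb\Bb - \Bb\widehat{\Mb} = \Exs\bigl[(\Bb_\xi-\Bb)(\Bb_\xi-\Bb)^\top\bigr]\,\Bb \;-\; \Bb\,\Exs\bigl[(\Bb_\xi-\Bb)^\top(\Bb_\xi-\Bb)\bigr],
\]
whose operator norm is at most $2\sigma_{\Bb}^2\sqrt{\lambda_{\max}(\Bb^\top\Bb)}$ by Assumption~\ref{assu_boundednoise_A}. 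Substituting $\eta=\bar\eta_{\Mb}(\alpha)$ and using the cap $\bar\eta_{\Mb}(\alpha)\le\alpha\lambda_{\min}(\Bb\Bb^\top)/(2\sigma_{\Bb}^2\sqrt{\lambda_{\max}(\Bb^\top\Bb)})$ ensures the denominator dominates $(1-\alpha)\lambda_{\min}(\Bb\Bb^\top)$, yielding Eq.~\eqref{linearized}. The main obstacle is the commutator estimate for $\Mb\Bb-\Bb\widehat{\Mb}$: in the nonrandom case $\Bb_\xi=\Bb$ the two products cancel identically, but with random $\Bb_\xi$ one must carefully decompose $\Bb_\xi=\Bb+(\Bb_\xi-\Bb)$ so that the mean-zero first-order pieces drop out and only the fluctuation covariances, controllable by $\sigma_{\Bb}^2$, remain.
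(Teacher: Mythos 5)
Your proposal is correct and follows essentially the same route as the paper: telescope the combined update with $\gb_{\xi,t}=\mathbf 0$, bound the telescoped difference via the last-iterate Theorem~\ref{theo_SEG_A}, control the martingale fluctuations through Eqs.~\eqref{sigmaAsq}--\eqref{sigmaA2sqinit} summed into $\Quan_{\Tholder+1}(\eta)$, and convert the preconditioned metric to the Euclidean one exactly as in Lemma~\ref{lemm_normconvert} (your commutator bound on $\Mb\Bb-\Bb\widehat{\Mb}$ is the transpose of the paper's term III estimate). Your use of Minkowski's inequality in the product $L^2$ space is just the closed form of the paper's Young split after it optimizes over $\gamma\in(0,\infty)$, so the two arguments coincide.
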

Lemma \ref{theo_SEGg_interpolation_B} can be seen as a fine-grained version of Theorem~\ref{theo_SEG_B}, and its proof is provided in \S\ref{sec_proof,theo_SEGg_interpolation_B}.
To understand it consider Theorem \ref{theo_SEGg_interpolation_B} in the case where $\Bb_\xi$ is nonstochastic so $\sigma_{\Bb} = \sigma_{\Bb,2} = 0$, taking $\eta$ as the maximal $\eta_{\Mb} = \frac{1}{\sqrt{\lambda_{\max}(\Bb^\top\Bb)}}$ then Eq.~\eqref{linearized} achieves the optimal prefactor which is bounded by the quadruple condition number of $\Bb^\top\Bb$.
In the general case where $\Bb_\xi$ is stochastic, the convergence rate upper bound Eq.~\eqref{Asingleloop_SEG_interpolation_B_glory2} has the nonrandom component as $O(1/\Tholder^2)$ as well as the random component of $O(1/\Tholder)$.

To prepare the proof we first introduce the following ``metric conversion'' lemma that translates bounds between two metrics:

\begin{lemma}\label{lemm_normconvert}
We have for any $\xholder\in \RR^n$, $\yholder\in \RR^m$ that
\beq\label{eqbdd1}\begin{aligned}
	&\quad\,
\left\| \Bb \yholder + \eta  \Mb \xholder \right\|^2
+
\left\| \Bb^\top \xholder - \eta \widehat{\Mb} \yholder \right\|^2
	=
\left\|\begin{bmatrix}
\Bb^\top		& - \eta  \widehat{\Mb}
\\
\eta \Mb		& \Bb
\end{bmatrix}
\begin{bmatrix}
\xholder
\\
\yholder
\end{bmatrix}\right\|^2 
	\\&\ge
\left(
\lambda_{\min}(\Bb\Bb^\top)\left(1 + \eta^2 \lambda_{\min}(\Bb\Bb^\top)\right)
-
2\eta\sigma_{\Bb}^2 \sqrt{\lambda_{\max}(\Bb^\top\Bb)}
\right)
\left[
\left\|\xholder\right\|^2 + \left\|\yholder\right\|^2
\right]
.
\end{aligned}\eeq
\end{lemma}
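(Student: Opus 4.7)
\textbf{Proof proposal for Lemma~\ref{lemm_normconvert}.}
The equality in Eq.~\eqref{eqbdd1} is immediate: the block matrix applied to $[\xholder;\yholder]$ produces exactly the concatenation of $\Bb^\top\xholder - \eta\widehat\Mb\yholder$ and $\eta\Mb\xholder + \Bb\yholder$, so only the lower bound requires work. The plan is to expand the two squared norms on the left, isolate three pieces (a ``zero-order'' quadratic in $\Bb$, an ``$\eta^2$-order'' quadratic in $\Mb,\widehat\Mb$, and a cross term linear in $\eta$), lower-bound the first two, and show that the cross term is small whenever $\Bb_\xi$ is close to its mean.

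Expanding gives
\[
\text{LHS of }\eqref{eqbdd1}
=
\|\Bb^\top\xholder\|^2 + \|\Bb\yholder\|^2
+
\eta^2\left(\|\Mb\xholder\|^2 + \|\widehat\Mb\yholder\|^2\right)
+
2\eta\,\xholder^\top\bigl(\Mb\Bb - \Bb\widehat\Mb\bigr)\yholder.
\]
For the zero-order piece, $\|\Bb^\top\xholder\|^2 \ge \lambda_{\min}(\Bb\Bb^\top)\|\xholder\|^2$ directly, and $\|\Bb\yholder\|^2 \ge \lambda_{\min}(\Bb^\top\Bb)\|\yholder\|^2 \ge \lambda_{\min}(\Bb\Bb^\top)\|\yholder\|^2$ by Eq.~\eqref{largeMsmallEV}. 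For the $\eta^2$-order piece, note that writing $\Bb_\xi = \Bb + \Delta_\xi$ with $\Exs[\Delta_\xi]=\mathbf{0}$ yields $\Mb = \Bb\Bb^\top + \Exs[\Delta_\xi\Delta_\xi^\top] \succeq \Bb\Bb^\top$ and similarly $\widehat\Mb \succeq \Bb^\top\Bb$; hence $\lambda_{\min}(\Mb)\ge \lambda_{\min}(\Bb\Bb^\top)$ and $\lambda_{\min}(\widehat\Mb)\ge \lambda_{\min}(\Bb\Bb^\top)$, so $\|\Mb\xholder\|^2 \ge \lambda_{\min}(\Bb\Bb^\top)^2\|\xholder\|^2$ and analogously for $\widehat\Mb\yholder$.

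The main obstacle is the indefinite cross term. Using the same decomposition $\Mb = \Bb\Bb^\top + \mathbf{C}_1$, $\widehat\Mb = \Bb^\top\Bb + \mathbf{C}_2$, with $\mathbf{C}_1 \equiv \Exs[\Delta_\xi\Delta_\xi^\top]$ and $\mathbf{C}_2\equiv \Exs[\Delta_\xi^\top\Delta_\xi]$, the deterministic parts cancel:
\[
\Mb\Bb - \Bb\widehat\Mb = \mathbf{C}_1\Bb - \Bb\mathbf{C}_2.
\]
Now Assumption~\ref{assu_boundednoise_A} gives $\|\mathbf{C}_1\|_{op}\lor\|\mathbf{C}_2\|_{op}\le \sigma_\Bb^2$. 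Applying Cauchy--Schwarz through $\mathbf{C}_1^{1/2}$, one gets
\[
\bigl|\xholder^\top\mathbf{C}_1\Bb\yholder\bigr|
\le \|\mathbf{C}_1^{1/2}\xholder\|\,\|\mathbf{C}_1^{1/2}\Bb\yholder\|
\le \sigma_\Bb^2\sqrt{\lambda_{\max}(\Bb^\top\Bb)}\,\|\xholder\|\|\yholder\|,
\]
and an identical bound holds for $|\xholder^\top\Bb\mathbf{C}_2\yholder|$. Combining via the AM--GM inequality $\|\xholder\|\|\yholder\|\le \tfrac12(\|\xholder\|^2+\|\yholder\|^2)$ yields
\[
\bigl|2\eta\,\xholder^\top(\Mb\Bb - \Bb\widehat\Mb)\yholder\bigr|
\le 2\eta\sigma_\Bb^2\sqrt{\lambda_{\max}(\Bb^\top\Bb)}\bigl(\|\xholder\|^2+\|\yholder\|^2\bigr).
\]
Summing the three contributions delivers exactly the claimed prefactor $\lambda_{\min}(\Bb\Bb^\top)(1+\eta^2\lambda_{\min}(\Bb\Bb^\top)) - 2\eta\sigma_\Bb^2\sqrt{\lambda_{\max}(\Bb^\top\Bb)}$. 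The only delicate step is recognizing that the deterministic part of the cross term is exactly zero (so that the sign-indefinite error is controlled only by the noise magnitude $\sigma_\Bb^2$ rather than by $\|\Bb\|_{op}^2$); once this cancellation is identified, the remaining estimates are standard spectral bounds.
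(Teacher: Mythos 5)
Your proof is correct and takes essentially the same route as the paper's: the same three-way split into a deterministic quadratic, an $\eta^2$-order term, and a sign-indefinite cross term, with the identical key cancellation $\Mb\Bb-\Bb\widehat{\Mb}=\Exs[(\Bb_\xi-\Bb)(\Bb_\xi-\Bb)^\top]\Bb-\Bb\,\Exs[(\Bb_\xi-\Bb)^\top(\Bb_\xi-\Bb)]$ and the same final estimate $2\eta\sigma_{\Bb}^2\sqrt{\lambda_{\max}(\Bb^\top\Bb)}\,(\|\xholder\|^2+\|\yholder\|^2)$ on the cross term. The one point of divergence is the $\eta^2$-order piece, where you bound $\|\Mb\xholder\|^2\ge\lambda_{\min}(\Mb)^2\|\xholder\|^2\ge\lambda_{\min}(\Bb\Bb^\top)^2\|\xholder\|^2$ directly, while the paper splits off $(\Bb\Bb^\top)^2$ and asserts that the remainder $\eta^2\bigl(\Mb^2-(\Bb\Bb^\top)^2\bigr)$ is positive semi-definite --- a claim that does not follow from $\Mb\succeq\Bb\Bb^\top$ alone --- so your handling of that term is actually the more careful one and reaches the same prefactor.
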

Lemma~\ref{lemm_normconvert} establishes a lower bound on a modified version of the Hamiltonian metric by (a constant multiple of) the squared Euclidean norm metric.
The proof of the above inequality is due to an estimation of the spectral lower bound of a matrix.
To take a first glance note in the nonrandom case $\sigma_{\Bb} = 0$, $\widehat{\Mb} = \Bb^\top\Bb$ and $\Mb = \Bb\Bb^\top$ and we conclude Eq.~\eqref{eqbdd1} in the form
$$\begin{aligned}
	&\quad\,
\left\|\Bb \yholder + \eta \Mb\xholder\right\|^2 
+
\left\|\Bb^\top\xholder - \eta \widehat{\Mb}\yholder\right\|^2
	=
\left\|\Bb \yholder + \eta \Bb\Bb^\top\xholder\right\|^2 
+
\left\|\Bb^\top\xholder - \eta \Bb^\top\Bb\yholder\right\|^2
	\\&=
\xholder^\top \left(\Bb\Bb^\top + \eta^2 (\Bb\Bb^\top)^2\right) \xholder
+
\yholder^\top \left(\Bb^\top\Bb + \eta^2 (\Bb^\top\Bb)^2\right) \yholder
	\\&\ge
\lambda_{\min}(\Bb\Bb^\top)\left(1 + \eta^2 \lambda_{\min}(\Bb\Bb^\top)\right)
\left[
\left\|\xholder\right\|^2 + \left\|\yholder\right\|^2
\right]
.
\end{aligned}$$
We prove the lemma for the general stochastic $\Bb_\xi$ case; details are deferred to \S\ref{sec_proof,lemm_normconvert}.

We are ready for the proof of Theorem~\ref{theo_SEGg_interpolation_C}.
\begin{proof}[Proof of Theorem~\ref{theo_SEGg_interpolation_C}]
From Lemma \ref{theo_SEGg_interpolation_B} we have
$$\begin{aligned}
\Exs\left[
\|\overline{\xholder}_{\Tholder}\|^2 + \|\overline{\yholder}_{\Tholder}\|^2
\right]
	\le
\frac{2}{(1-\alpha)\lambda_{\min}(\Bb\Bb^\top)}
\left(
\sqrt{\frac{2}{\bar\eta_{\Mb}(\alpha)^2 (\Tholder+1)^2}} 
+ 
\sqrt{\frac{\sigma_{\Bb}^2 + \bar\eta_{\Mb}(\alpha)^2 \sigma_{\Bb,2}^2}{\Tholder+1}}
\right)^2
\left[\|\xholder_0\|^2 + \|\yholder_0\|^2 \right]
,
\end{aligned}$$
so after $\Tholder$ steps the metric $
\Exs\left[\|\overline{\xholder}_{\Tholder}\|^2 + \|\overline{\yholder}_{\Tholder}\|^2\right]
	\le
\frac{1}{e^2}\left[\|\xholder_0\|^2 + \|\yholder_0\|^2\right]
$, i.e.~we only need
$$
\sqrt{\frac{2}{(1-\alpha)\lambda_{\min}(\Bb\Bb^\top)}}\left(
\sqrt{\frac{2}{\bar\eta_{\Mb}(\alpha)^2 (\Tholder+1)^2}}
+
\sqrt{\frac{\sigma_{\Bb}^2 + \bar\eta_{\Mb}(\alpha)^2 \sigma_{\Bb,2}^2}{\Tholder+1}}
\right)
	\le
\frac{1}{e}
,
$$
i.e.
$$
\frac{2}{\Tholder+1}
+
\sqrt{2\bar\eta_{\Mb}(\alpha)^2\left(\sigma_{\Bb}^2 + \bar\eta_{\Mb}(\alpha)^2 \sigma_{\Bb,2}^2 \right)}
\cdot\frac{1}{\sqrt{\Tholder+1}}
	\le
\sqrt{
\frac{(1-\alpha)\bar\eta_{\Mb}(\alpha)^2\lambda_{\min}(\Bb\Bb^\top)}{e^2}
}
	\equiv
\RATE
,
$$
solving the above inequality and ignoring the infeasible solutions gives for any prescribed $\alpha\in (0,1)$
$$\begin{aligned}
\Tholder_{\operatorname{thres}}(\alpha)+1
	=
\left(
\frac{\sqrt{2\bar\eta_{\Mb}(\alpha)^2\left(\sigma_{\Bb}^2 + \bar\eta_{\Mb}(\alpha)^2 \sigma_{\Bb,2}^2 \right)}
+
\sqrt{2\bar\eta_{\Mb}(\alpha)^2\left(\sigma_{\Bb}^2 + \bar\eta_{\Mb}(\alpha)^2 \sigma_{\Bb,2}^2 \right)+8\RATE}
}{2\RATE}
\right)^2
,
\end{aligned}$$
which reduces to (\ref{Tholder_epoch}') after rationalizing the numerator.
\end{proof}

\subsection{Auxiliary Proofs}\label{sec_proof,auxiliary2}
\subsubsection{Proof of Lemma \ref{theo_SEGg_interpolation_B}}\label{sec_proof,theo_SEGg_interpolation_B}
For the proof of Lemma \ref{theo_SEGg_interpolation_B}, our analysis lends help of Young's inequality via coefficients $1+\gamma$, $1 + \frac{1}{\gamma}$ with optimized coefficient $\gamma\in (0,\infty)$.

\begin{proof}[Proof of Lemma \ref{theo_SEGg_interpolation_B}]
Turning to Eq.~\eqref{Asingleloop_SEG_B}, setting $\eta$ as in Eq.~\eqref{eta_choice} and telescoping both sides of the update rule~Eq.~\eqref{SEGupdate_combined} with $\gb^\xholder_{\xi,t} = 0$ and $\gb^\yholder_{\xi,t} = 0$ for $t = 1,\ldots,\Tholder$ gives
$$\begin{aligned}
- \eta^2 \sum_{t=1}^\Tholder  \Bb_{\xi, t} \Bb_{\xi, t}^\top \xholder_{t-1} 
- \eta \sum_{t=1}^\Tholder  \Bb_{\xi, t} \yholder_{t-1}
&=
\xholder_\Tholder - \xholder_0
	\\
- \eta^2 \sum_{t=1}^\Tholder \Bb_{\xi, t}^\top \Bb_{\xi, t} \yholder_{t-1}
+ \eta \sum_{t=1}^\Tholder \Bb_{\xi, t}^\top \xholder_{t-1}
&=
\yholder_\Tholder - \yholder_0
.
\end{aligned}$$
Manipulating gives
$$\begin{aligned}
	&\quad\,
\Bb\overline{\yholder}_{\Tholder-1}
+
\eta\Mb\overline{\xholder}_{\Tholder-1} 
+
\frac{1}{\Tholder}\sum_{t=1}^\Tholder (\Bb_{\xi, t} - \Bb) \yholder_{t-1}
+
\frac{\eta}{\Tholder}\sum_{t=1}^\Tholder \left( \Bb_{\xi, t} \Bb_{\xi, t}^\top - \Mb \right)\xholder_{t-1} 
	\\&=
\frac{1}{\Tholder}\sum_{t=1}^\Tholder \Bb_{\xi, t} \yholder_{t-1}
+
\frac{\eta}{\Tholder}\sum_{t=1}^\Tholder   \Bb_{\xi, t} \Bb_{\xi, t}^\top \xholder_{t-1} 
	=
\frac{\xholder_\Tholder - \xholder_0}{-\eta\Tholder}
,
\end{aligned}$$
and
$$\begin{aligned}
	&\quad\,
\Bb^\top\overline{\xholder}_{\Tholder-1}
-
\eta\widehat\Mb\overline{\yholder}_{\Tholder-1} 
+
\frac{1}{\Tholder}\sum_{t=1}^\Tholder (\Bb_{\xi, t} - \Bb)^\top \xholder_{t-1}
-
\frac{\eta}{\Tholder}\sum_{t=1}^\Tholder \left(\Bb_{\xi, t}^\top \Bb_{\xi, t} - \widehat\Mb\right)\yholder_{t-1} 
	\\&=
\frac{1}{\Tholder}\sum_{t=1}^\Tholder \Bb_{\xi, t}^\top \xholder_{t-1}
-
\frac{\eta}{\Tholder} \sum_{t=1}^\Tholder \Bb_{\xi, t}^\top \Bb_{\xi, t} \yholder_{t-1}
	=
\frac{\yholder_\Tholder - \yholder_0}{\eta\Tholder}
.
\end{aligned}$$
Now we try to bound the sum of squared norms of the first part (i.e.~first two terms) on the left hands in the above two displays:
applying Young's inequality gives for any fixed $\gamma\in (0,\infty)$
$$\begin{aligned}
	&\quad\,
\Exs\left[
\left\| \Bb \overline{\yholder}_{\Tholder-1}
+
\eta  \Mb \overline{\xholder}_{\Tholder-1}
\right\|^2
+
\left\| \Bb^\top \overline{\xholder}_{\Tholder-1}
-
\eta \widehat{\Mb} \overline{\yholder}_{\Tholder-1}
\right\|^2
\right]
	\\&\le
(1+\gamma)\Exs\left\|\frac{\xholder_\Tholder - \xholder_0}{-\eta\Tholder}\right\|^2 
+
(1+\gamma)\Exs\left\|\frac{\yholder_\Tholder - \yholder_0}{\eta\Tholder}\right\|^2
	\\&\quad\,
+
\left(1+\frac{1}{\gamma}\right)\Exs\left\|
\frac{1}{\Tholder}\sum_{t=1}^\Tholder (\Bb_{\xi, t} - \Bb) \yholder_{t-1}
+
\frac{\eta}{\Tholder}\sum_{t=1}^\Tholder  \left( \Bb_{\xi, t}\Bb_{\xi, t}^\top - \Mb \right)\xholder_{t-1} 
\right\|^2
	\\&\quad\,
+
\left(1+\frac{1}{\gamma}\right)\Exs\left\|
\frac{1}{\Tholder}\sum_{t=1}^\Tholder (\Bb_{\xi, t} - \Bb)^\top \xholder_{t-1}
-
\frac{\eta}{\Tholder}\sum_{t=1}^\Tholder  \left( \Bb_{\xi, t}^\top \Bb_{\xi, t} - \widehat{\Mb} \right)\yholder_{t-1} 
\right\|^2
	\\&\le
\frac{4(1+\gamma)}{\eta^2\Tholder^2}\left[\|\xholder_0\|^2 + \|\yholder_0\|^2 \right]
	\\&\quad\,
+
\frac{2\left(1+\frac{1}{\gamma}\right)}{\Tholder^2}\sum_{t=1}^\Tholder \left[
\Exs\left\|(\Bb_{\xi, t} - \Bb) \yholder_{t-1}\right\|^2
+
\Exs\left\|(\Bb_{\xi, t} - \Bb)^\top \xholder_{t-1}\right\|^2
\right]
	\\&\quad\,
+
\frac{2\left(1+\frac{1}{\gamma}\right)\eta^2}{\Tholder^2}\sum_{t=1}^\Tholder  \left[
\Exs\left\|\left( \Bb_{\xi, t}\Bb_{\xi, t}^\top - \Mb \right)\xholder_{t-1} \right\|^2
+
\Exs\left\|\left( \Bb_{\xi, t}^\top \Bb_{\xi, t} - \widehat{\Mb} \right)\yholder_{t-1} \right\|^2
\right]
,
\end{aligned}$$
where for each $t=1,\dots,\Tholder$ we have, by applying Eq.~\eqref{sigmaAsq} and Eq.~\eqref{sigmaA2sqinit} in Assumption \ref{assu_boundednoise_A} on operator norms, that
$$\begin{aligned}
	&\quad\,
\Exs_\xi\left\| (\Bb_{\xi, t} - \Bb) \yholder_{t-1} \right\|^2
+
\Exs_\xi\left\| (\Bb_{\xi, t} - \Bb)^\top \xholder_{t-1} \right\|^2
	\\&=
(\yholder_{t-1})^\top		\Exs_\xi\left[
(\Bb_{\xi, t} - \Bb)^\top (\Bb_{\xi, t} - \Bb)
\right]
\yholder_{t-1}
+
(\xholder_{t-1})^\top		\Exs_\xi\left[
(\Bb_{\xi, t} - \Bb) (\Bb_{\xi, t} - \Bb)^\top
\right]
\xholder_{t-1}
	\\&\le
\left\|\Exs_\xi\left[(\Bb_{\xi, t} - \Bb)^\top(\Bb_{\xi, t} - \Bb)\right]\right\|_{op} \left\|\yholder_{t-1}\right\|^2
+
\left\|\Exs_\xi\left[(\Bb_{\xi, t} - \Bb)(\Bb_{\xi, t} - \Bb)^\top\right]\right\|_{op} \left\|\xholder_{t-1}\right\|^2
	\\&\le
\sigma_{\Bb}^2 \left[ \|\xholder_{t-1}\|^2 + \|\yholder_{t-1}\|^2 \right]
,
\end{aligned}$$
and
$$\begin{aligned}
	&\quad\,
\Exs_\xi\left\|
\left( \Bb_{\xi, t}\Bb_{\xi, t}^\top - \Mb \right)\xholder_{t-1} 
\right\|^2
+
\Exs_\xi\left\|
\left( \Bb_{\xi, t}^\top \Bb_{\xi, t} - \widehat{\Mb} \right)\yholder_{t-1} 
\right\|^2
	\\&=
(\xholder_{t-1})^\top
\Exs_\xi\left( \Bb_{\xi, t}\Bb_{\xi, t}^\top - \Mb \right)^2
\xholder_{t-1}
+
(\yholder_{t-1})^\top
\Exs_\xi\left( \Bb_{\xi, t}^\top \Bb_{\xi, t} - \widehat{\Mb} \right)^2
\yholder_{t-1}
	\\&\le
\left\|\Exs_\xi
\left( \Bb_{\xi, t}^\top \Bb_{\xi, t} - \widehat{\Mb} \right)^2
\right\|_{op} \left\|\yholder_{t-1}\right\|^2
+
\left\|\Exs_\xi
\left( \Bb_{\xi, t}\Bb_{\xi, t}^\top - \Mb \right)^2
\right\|_{op} \left\|\xholder_{t-1}\right\|^2
	\\&\le
\sigma_{\Bb,2}^2 \left[ \|\xholder_{t-1}\|^2 + \|\yholder_{t-1}\|^2 \right]
.
\end{aligned}$$
Taking expectation once again gives, by applying Eq.~\eqref{xygrowth_SEG_A} that for $t=1,\dots,\Tholder$
$$\begin{aligned}
	&\quad\,
\Exs\left\| (\Bb_{\xi, t} - \Bb) \yholder_{t-1} \right\|^2
+
\Exs\left\| (\Bb_{\xi, t} - \Bb)^\top \xholder_{t-1} \right\|^2
	\\&\le
\sigma_{\Bb}^2 \Exs\left[ \|\xholder_{t-1}\|^2 + \|\yholder_{t-1}\|^2 \right]
	\le
\sigma_{\Bb}^2 \left(1 - \eta^2 \lambdaetaprime\right)^{t-1} \left[\|\xholder_0\|^2 + \|\yholder_0\|^2 \right]
,
\end{aligned}$$
and
$$\begin{aligned}
	&\quad\,
\Exs\left\|
\left( \Bb_{\xi, t}\Bb_{\xi, t}^\top - \Mb \right)\xholder_{t-1} 
\right\|^2
+
\Exs\left\|
\left( \Bb_{\xi, t}^\top \Bb_{\xi, t} - \widehat{\Mb} \right)\yholder_{t-1} 
\right\|^2
	\\&\le
\sigma_{\Bb,2}^2 \Exs \left[ \|\xholder_{t-1}\|^2 + \|\yholder_{t-1}\|^2 \right]
	\le
\sigma_{\Bb,2}^2 \left(1 - \eta^2 \lambdaetaprime\right)^{t-1} \left[\|\xholder_0\|^2 + \|\yholder_0\|^2 \right]
,
\end{aligned}$$
so denoting $\Quan_{\Tholder}(\eta) \equiv \sum_{t=1}^\Tholder \left(1 - \eta^2 \lambdaetaprime\right)^{t-1}$ as in Eq.~\eqref{Quandef} concludes
$$\begin{aligned}
	&\quad\,
\Exs\left[
\left\| \Bb \overline{\yholder}_{\Tholder-1}
+
\eta  \Mb \overline{\xholder}_{\Tholder-1}
\right\|^2
+
\left\| \Bb^\top \overline{\xholder}_{\Tholder-1}
-
\eta \widehat{\Mb} \overline{\yholder}_{\Tholder-1}
\right\|^2
\right]
	\\&\le
\frac{4(1+\gamma)}{\eta^2\Tholder^2}\left[\|\xholder_0\|^2 + \|\yholder_0\|^2 \right]
+
\frac{2\left(1+\frac{1}{\gamma}\right)}{\Tholder^2}\sum_{t=1}^\Tholder \left[
\Exs\left\|(\Bb_{\xi, t} - \Bb) \yholder_{t-1}\right\|^2
+
\Exs\left\|(\Bb_{\xi, t} - \Bb)^\top \xholder_{t-1}\right\|^2
\right]
	\\&\quad\,
+
\frac{2\left(1+\frac{1}{\gamma}\right)\eta^2}{\Tholder^2}\sum_{t=1}^\Tholder  \left[
\Exs\left\|\left( \Bb_{\xi, t}\Bb_{\xi, t}^\top - \Mb \right)\xholder_{t-1} \right\|^2
+
\Exs\left\|\left( \Bb_{\xi, t}^\top \Bb_{\xi, t} - \widehat{\Mb} \right)\yholder_{t-1} \right\|^2
\right]
	\\&\le
\frac{4(1+\gamma)}{\eta^2\Tholder^2}\left[\|\xholder_0\|^2 + \|\yholder_0\|^2 \right]
+
\frac{2\left(1+\frac{1}{\gamma}\right)}{\Tholder^2}
\sum_{t=1}^\Tholder
\sigma_{\Bb}^2 \left(1 - \eta^2 \lambdaetaprime\right)^{t-1} \left[\|\xholder_0\|^2 + \|\yholder_0\|^2 \right]
	\\&\quad\,
+
\frac{2\left(1+\frac{1}{\gamma}\right)\eta^2}{\Tholder^2}
\sum_{t=1}^\Tholder
\sigma_{\Bb,2}^2 \left(1 - \eta^2 \lambdaetaprime\right)^{t-1} \left[\|\xholder_0\|^2 + \|\yholder_0\|^2 \right]
	\\&\le
\frac{4(1+\gamma)}{\eta^2\Tholder^2}\left[\|\xholder_0\|^2 + \|\yholder_0\|^2 \right]
+
\frac{2\left(1+\frac{1}{\gamma}\right)}{\Tholder^2} \left(\sigma_{\Bb}^2 + \eta^2 \sigma_{\Bb,2}^2 \right)
\Quan_{\Tholder}(\eta) \left[\|\xholder_0\|^2 + \|\yholder_0\|^2 \right]
.
\end{aligned}$$
In above we used the iterated laws of expectation as well as the property of $L^2$ martingale at multiple occasions.
Therefore
\beq\label{Asingleloop_SEG_interpolation_B_glory_prime}\begin{aligned}
	&\quad\,
\left(
\lambda_{\min}(\Bb\Bb^\top)\left(1 + \eta^2\lambda_{\min}(\Bb\Bb^\top)\right)
-
2\eta\sigma_{\Bb}^2 \sqrt{\lambda_{\max}(\Bb^\top\Bb)}
\right)
\Exs\left[
\left\|\overline{\xholder}_{\Tholder-1}\right\|^2 + \left\|\overline{\yholder}_{\Tholder-1}\right\|^2
\right]
	\\&\le
\Exs\left[
\left\| \Bb \overline{\yholder}_{\Tholder-1}		+	\eta  \Mb \overline{\xholder}_{\Tholder-1} \right\|^2
+
\left\| \Bb^\top \overline{\xholder}_{\Tholder-1}	-	\eta \widehat{\Mb} \overline{\yholder}_{\Tholder-1} \right\|^2
\right]
	\\&\le
\inf_{\gamma\in (0,\infty)}\left(
\frac{4(1+\gamma)}{\eta^2}
+
2\left(1+\frac{1}{\gamma}\right)\left(\sigma_{\Bb}^2 + \eta^2 \sigma_{\Bb,2}^2 \right)
\Quan_{\Tholder}(\eta)
\right)
\frac{\|\xholder_0\|^2 + \|\yholder_0\|^2}{\Tholder^2}
.
\end{aligned}\eeq
Note by optimizing over $\gamma\in (0,\infty)$ in above the identity is
$$
\inf_{\gamma\in (0,\infty)}\left(
\frac{4\gamma}{\eta^2}
+
\frac{2}{\gamma}\left(\sigma_{\Bb}^2 + \eta^2 \sigma_{\Bb,2}^2 \right)
\Quan_{\Tholder}(\eta)
\right)
=
2\sqrt{
\frac{8}{\eta^2}\left(\sigma_{\Bb}^2 + \eta^2 \sigma_{\Bb,2}^2 \right)
\Quan_{\Tholder}(\eta)
}
,
$$
so the prefactor on the right hand of Eq.~\eqref{Asingleloop_SEG_interpolation_B_glory_prime} reduces to
$$
\frac{4}{\eta^2} + 2\left(\sigma_{\Bb}^2 + \eta^2 \sigma_{\Bb,2}^2 \right)\Quan_{\Tholder}(\eta)
+
2\sqrt{
\frac{8}{\eta^2}\left(\sigma_{\Bb}^2 + \eta^2 \sigma_{\Bb,2}^2 \right)
\Quan_{\Tholder}(\eta)
}
	=
\left(\frac{2}{\eta} + \sqrt{2\left(\sigma_{\Bb}^2 + \eta^2 \sigma_{\Bb,2}^2 \right)\Quan_{\Tholder}(\eta)}\right)^2
,
$$
concluding Eq.~\eqref{Asingleloop_SEG_interpolation_B_glory} by replacing $\Tholder$ by $\Tholder+1$.

Now to finish the proof, by setting $\eta$ as $\bar\eta_{\Mb}(\alpha)$ defined as in Eq.~\eqref{eta_choice}, we have a tight upper bound of the prefactor as the step size $\eta$ over interval $\left(0,\bar\eta_{\Mb}(\alpha)\right]$ for a prescribed $\alpha\in (0,1)$, as
$$\begin{aligned}
\prefactor_{\Tholder+1}(\eta)
	&=
\frac{
\left(2 + \sqrt{2\eta^2\left(\sigma_{\Bb}^2 + \eta^2 \sigma_{\Bb,2}^2 \right)\Quan_{\Tholder+1}(\eta)}\right)^2
}{
\eta^2\lambda_{\min}(\Bb\Bb^\top)\left(1 + \eta^2\lambda_{\min}(\Bb\Bb^\top)\right)
-
2\eta^3\sigma_{\Bb}^2 \sqrt{\lambda_{\max}(\Bb^\top\Bb)}
}
	\\&\le
\frac{
\left(2 + \sqrt{2\eta^2\left(\sigma_{\Bb}^2 + \eta^2 \sigma_{\Bb,2}^2 \right) (\Tholder+1)} \right)^2
}{
(1-\alpha)\eta^2\lambda_{\min}(\Bb\Bb^\top)\left(1 + \eta^2\lambda_{\min}(\Bb\Bb^\top)\right)
}
	\\&\le
\frac{2}{(1-\alpha)\lambda_{\min}(\Bb\Bb^\top)}
\left(
\sqrt{\frac{2}{\eta^2}}
+
\sqrt{\left(\sigma_{\Bb}^2 + \eta^2 \sigma_{\Bb,2}^2\right)(\Tholder+1)}
\right)^2
,
\end{aligned}$$
which is just
\beq\tag{\ref{linearized}}\begin{aligned}
\prefactor_{\Tholder+1}(\eta)
	&\le
\frac{2}{(1-\alpha)\lambda_{\min}(\Bb\Bb^\top)}
\Bigg(
\underbrace{
\frac{2}{\eta^2}
+
\frac{2\sqrt{2(\Tholder+1)}}{\eta}\cdot \sqrt{\sigma_{\Bb}^2 + \eta^2 \sigma_{\Bb,2}^2}
}_{\text{linearization}}
+
\underbrace{
(\Tholder+1)\cdot\left(\sigma_{\Bb}^2 + \eta^2 \sigma_{\Bb,2}^2\right)
}_{\text{higher-order term}}
\Bigg)
.
\end{aligned}\eeq
It is straightforward to verify that for a prescribed $\alpha\in (0,1)$, $\bar\eta_{\Mb}(\alpha)$ simply minimizes the upper bound in the last line of Eq.~\eqref{linearized} when dropping the higher-order term in $\sqrt{\sigma_{\Bb}^2 + \eta^2 \sigma_{\Bb,2}^2}$, since such a \emph{linearized prefactor} is nonincreasing over $\eta\in \left(0,\bar\eta_{\Mb}(\alpha)\right]$.
This completes the proof of Eq.~\eqref{Asingleloop_SEG_interpolation_B_glory} and the full version of Lemma \ref{theo_SEGg_interpolation_B}.%
\footnote{
Note one can further optimize the above prefactor over $\alpha\in (0,1)$  (so that the convergence rate upper bound is minimized), but finding an interpretable closed-form solution can be unrealistic.
An initial attempt on this thread is to optimize a surrogate function $
\frac{\eta_{\Mb}^2}{(1-\alpha)\bar\eta_{\Mb}(\alpha)^2}
$, $\alpha\in (0,1)$, which is upper-bounded by
$$
\frac{1}{1-\alpha}
+
\frac{1}{\alpha^2(1-\alpha)}\left(
\frac{2\eta_{\Mb}\sigma_{\Bb}^2 \sqrt{\lambda_{\max}(\Bb^\top\Bb)}}{\lambda_{\min}(\Bb\Bb^\top)}
\right)^2
	\equiv
\frac{1}{1-\alpha}
+
\frac{\mathcal{A}}{\alpha^2(1-\alpha)}
,
$$
In the regime of $\mathcal{A}\to 0^+$, its closed-form solution is available but hard to interpret, and standard asymptotic analysis indicates that $
\alpha \sim \mathcal{A}^{1/3} = \left(
\frac{2\eta_{\Mb}\sigma_{\Bb}^2 \sqrt{\lambda_{\max}(\Bb^\top\Bb)}}{\lambda_{\min}(\Bb\Bb^\top)}
\right)^{2/3}
$ minimizes the above display.
}
\end{proof}

\subsubsection{Proof of Lemma \ref{lemm_normconvert}}\label{sec_proof,lemm_normconvert}

\begin{proof}[Proof of Lemma~\ref{lemm_normconvert}]
The left hand of Eq.~\eqref{eqbdd1} reads
$$\begin{aligned}
	&\hspace{-.1in}
\left\| \Bb \yholder
+
\eta  \Mb \xholder
\right\|^2
+
\left\| \Bb^\top \xholder
-
\eta \widehat{\Mb} \yholder
\right\|^2
	=
\left\|\begin{bmatrix}
\Bb^\top		& - \eta  \widehat{\Mb}
\\
\eta \Mb		& \Bb
\end{bmatrix}
\begin{bmatrix}
\xholder
\\
\yholder
\end{bmatrix}\right\|^2 
	\\&=
\begin{bmatrix}
\xholder^\top &	\yholder^\top
\end{bmatrix}
\begin{bmatrix}
\Bb				& \eta \Mb
\\
- \eta  \widehat{\Mb}	& \Bb^\top
\end{bmatrix}
\begin{bmatrix}
\Bb^\top		& - \eta  \widehat{\Mb}
\\
\eta \Mb		& \Bb
\end{bmatrix}
\begin{bmatrix}
\xholder
\\
\yholder
\end{bmatrix}
	\\&=
\begin{bmatrix}
\xholder^\top &	\yholder^\top
\end{bmatrix}\begin{bmatrix}
\Bb\Bb^\top + \eta^2 \Mb^2					&	-\eta \Bb \widehat{\Mb} + \eta  \Mb^\top \Bb
\\
-\eta \widehat{\Mb} \Bb^\top + \eta \Bb^\top \Mb		&	\Bb^\top\Bb + \eta^2\widehat{\Mb}^2
\end{bmatrix}
\begin{bmatrix}
\xholder
\\
\yholder
\end{bmatrix}
	\\&=
\begin{bmatrix}
\xholder^\top &
\yholder^\top
\end{bmatrix}\begin{bmatrix}
\Bb\Bb^\top + \eta^2 (\Bb\Bb^\top)^2			&	0
\\
0									&	\Bb^\top\Bb + \eta^2(\Bb^\top\Bb)^2
\end{bmatrix}
\begin{bmatrix}
\xholder\\
\yholder
\end{bmatrix}
	\\&\quad\, 
+
\begin{bmatrix}
\xholder^\top &
\yholder^\top
\end{bmatrix}\begin{bmatrix}
\eta^2 \Mb^2 - \eta^2 (\Bb\Bb^\top)^2		&	0
\\
0									&	\eta^2\widehat{\Mb}^2 - \eta^2(\Bb^\top\Bb)^2
\end{bmatrix}
\begin{bmatrix}
\xholder\\
\yholder
\end{bmatrix}
	\\&\quad\, 
+
\begin{bmatrix}
\xholder^\top				&	\yholder^\top
\end{bmatrix}\begin{bmatrix}
0									&	-\eta \Bb \widehat{\Mb} + \eta  \Mb^\top \Bb
\\
-\eta \widehat{\Mb} \Bb^\top + \eta \Bb^\top \Mb	&	0
\end{bmatrix}
\begin{bmatrix}
\xholder
\\
\yholder
\end{bmatrix}
	\\&=
\xholder^\top\left[
\Bb\Bb^\top + \eta^2 (\Bb\Bb^\top)^2
\right]\xholder
+
\yholder^\top\left[
\Bb^\top\Bb + \eta^2(\Bb^\top\Bb)^2
\right]\yholder
	\\&\quad\, 
+
\xholder^\top\left[
\eta^2 \Mb^2 - \eta^2 (\Bb\Bb^\top)^2
\right]\xholder
+
\yholder^\top\left[
\eta^2\widehat{\Mb}^2 - \eta^2(\Bb^\top\Bb)^2
\right]\yholder
	\\&\quad\, 
+ 
2\yholder^\top\left(-\eta \widehat{\Mb} \Bb^\top + \eta \Bb^\top \Mb\right)\xholder 
	\\&\equiv
\mbox{I}
+
\mbox{II}
+
\mbox{III}
,
\end{aligned}$$
where
$$\begin{aligned}
\mbox{I}
	&=
\xholder^\top\left[
\Bb\Bb^\top + \eta^2 (\Bb\Bb^\top)^2
\right]\xholder
+
\yholder^\top\left[
\Bb^\top\Bb + \eta^2(\Bb^\top\Bb)^2
\right]\yholder
	\\&=
\left\| \Bb \yholder
+
\eta  \Bb\Bb^\top \xholder
\right\|^2
+
\left\| \Bb^\top \xholder
-
\eta \Bb^\top\Bb \yholder
\right\|^2
.
\end{aligned}$$
In addition, we have from Assumption \ref{assu_boundednoise_A} that
$
\Mb - \Bb\Bb^\top
	=
\Exs\left[ \Bb_{\xi}\Bb_{\xi}^\top \right] - \Bb\Bb^\top
	=
\Exs\left[ (\Bb_{\xi} - \Bb) (\Bb_{\xi} - \Bb)^\top \right]
	\succeq
\mathbf{0}
$
and analogously
$
\widehat{\Mb} - \Bb^\top\Bb
	=
\Exs\left[ \Bb_{\xi}^\top \Bb_{\xi}\right] - \Bb^\top \Bb
	=
\Exs\left[ (\Bb_{\xi} - \Bb)^\top (\Bb_{\xi} - \Bb) \right]
	\succeq
0
$
that almost surely
$$\begin{aligned}
\mbox{II}
	&\ge
\xholder^\top\left[
\eta^2 \Mb^2 - \eta^2 (\Bb\Bb^\top)^2
\right]\xholder
+
\yholder^\top\left[
\eta^2\widehat{\Mb}^2 - \eta^2(\Bb^\top\Bb)^2
\right]\yholder
	\ge
0
.
\end{aligned}$$
The third term
$$\begin{aligned}
\mbox{III}
=
2\yholder^\top\left(-\eta \widehat{\Mb} \Bb^\top + \eta \Bb^\top \Mb\right)\xholder 
\end{aligned}$$
satisfies
$$\begin{aligned}
\left|\mbox{III}\right|
	&=
\left|
2\yholder^\top\left(-\eta \widehat{\Mb} \Bb^\top + \eta \Bb^\top \Mb\right)\xholder
\right| 
	\\&=
2\left|
\yholder^\top(-\eta \widehat{\Mb} \Bb^\top + \eta \Bb^\top\Bb \Bb^\top + \eta \Bb^\top \Mb - \eta \Bb^\top \Bb \Bb^\top )\xholder
\right|
	\\&\le
2\eta \left|
-\yholder^\top\left(\widehat{\Mb} - \Bb^\top\Bb\right) \Bb^\top \xholder 
\right| 
+ 
2\eta \left|
\yholder^\top\Bb^\top \left(\Mb - \Bb \Bb^\top \right)\xholder
\right|
	\\&\le
2\eta
\left\|\left(\widehat{\Mb} - \Bb^\top\Bb\right)\yholder\right\|
\left\|\Bb^\top\xholder\right\|
+ 
2\eta
\left\|\left(\Mb - \Bb\Bb^\top\right)\xholder\right\|
\left\|\Bb\yholder\right\|
	\\&\le
2\eta\sigma_{\Bb}^2
\left\|\yholder\right\|
\left\|\Bb^\top\xholder\right\|
+ 
2\eta\sigma_{\Bb}^2
\left\|\xholder\right\|
\left\|\Bb\yholder\right\|
	\\&\le
2\eta\sigma_{\Bb}^2 \sqrt{\lambda_{\max}(\Bb^\top\Bb)}\left[
\left\|\xholder\right\|^2
+
\left\|\yholder\right\|^2
\right]
,
\end{aligned}$$
where we have from Eq.~\eqref{sigmaAsq} and Eq.~\eqref{sigmaA2sqinit} of Assumption \ref{assu_boundednoise_A} that for all $\xholder\in \RR^n, \yholder\in \RR^m$ that
$
\left\| ( \widehat{\Mb} - \Bb^\top\Bb ) \yholder \right\|
	\le
\sigma_{\Bb}^2 \left\|\yholder\right\|
$ and $
\left\| ( \Mb - \Bb \Bb^\top ) \xholder \right\|
	\le
\sigma_{\Bb}^2 \left\|\xholder\right\|
$.

One final piece is that for any given $\xholder\in\RR^n, \yholder\in\RR^m$ and $\eta>0$
\beq\label{lambdaMINbdd}\begin{aligned}
	&\quad\,
\left\| \Bb \yholder + \eta  \Bb\Bb^\top \xholder \right\|^2
+
\left\| \Bb^\top \xholder - \eta \Bb^\top\Bb \yholder \right\|^2
	\\&=
\xholder^\top\left[
\Bb\Bb^\top + \eta^2 (\Bb\Bb^\top)^2
\right]\xholder
+
\yholder^\top\left[
\Bb^\top\Bb + \eta^2(\Bb^\top\Bb)^2
\right]\yholder
	\\&\ge
\left(1 + \eta^2 \lambda_{\min}(\Bb\Bb^\top)\right)
\left[
\left\|\Bb^\top \xholder\right\|^2 + \left\|\Bb\yholder\right\|^2
\right]
	\ge
\lambda_{\min}(\Bb\Bb^\top)\left(1 + \eta^2 \lambda_{\min}(\Bb\Bb^\top)\right)
\left[
\left\|\xholder\right\|^2 + \left\|\yholder\right\|^2
\right]
.
\end{aligned}\eeq
Now to conclude Eq.~\eqref{eqbdd1}, we apply Eq.~\eqref{lambdaMINbdd}, and the above analysis (taking the expectation) gives the following result
$$\begin{aligned}
	&\quad\,
\left\| \Bb \yholder + \eta  \Mb \xholder \right\|^2
+
\left\| \Bb^\top \xholder - \eta \widehat{\Mb} \yholder \right\|^2
	=
\mbox{I} + \mbox{II} + \mbox{III}
	\ge
\mbox{I} - |\mbox{III}|
	\\&=
\left\| \Bb \yholder + \eta  \Bb\Bb^\top \xholder \right\|^2
+
\left\| \Bb^\top \xholder - \eta \Bb^\top\Bb \yholder \right\|^2
-
2\eta\sigma_{\Bb}^2 \sqrt{\lambda_{\max}(\Bb^\top\Bb)} \left[
\left\| \xholder \right\|^2 + \left\| \yholder \right\|^2
\right]
,
\end{aligned}$$
which is no less than
$$
\left(
\lambda_{\min}(\Bb\Bb^\top)\left(1 + \eta^2 \lambda_{\min}(\Bb\Bb^\top)\right)
-
2\eta\sigma_{\Bb}^2 \sqrt{\lambda_{\max}(\Bb^\top\Bb)}
\right)
\left[
\left\|\xholder\right\|^2 + \left\|\yholder\right\|^2
\right]
,
$$
again due to Eq.~\eqref{lambdaMINbdd}.
\end{proof}

\section{ADDITIONAL EXPERIMENTAL RESULTS}\label{sec:additional-exp-results-appendix}

\paragraph{Experiments on GANs.}
We conduct GANs experiments on MNIST~\citep{lecun1998mnist} understand more the empirical performance of our restarted iteration-averaged SEG in non-convex non-concave minimax optimization problems.
Specifically, we adopt the DCGAN network architecture proposed in \citet{radford2015unsupervised} and use the loss proposed in \citet{goodfellow2014generative}.
We adopt ExtraAdam~\citep{gidel2019variational} as the optimizer and denote the ExtraAdam with averaging by SEG-Avg.
Meanwhile, we apply restarting at iteration 2000 (halfway the total run) for SEG-Avg and denote the restarting method by SEG-Avg-Restart.
We apply Fréchet Inception distance (FID)~\citep{heusel2017gans} score for measuring GAN performances, and compare the performance of SEG-Avg and SEG-Avg-Restart in Figure~\ref{fig:GAN}.
We observe that proper restarting schedule improves the model performance w.r.t.~FID score (lower scores indicate better performance).

\begin{figure*}[h]
\begin{center}
\subfigure[FID comparison.]{
\includegraphics[width=0.38\textwidth]{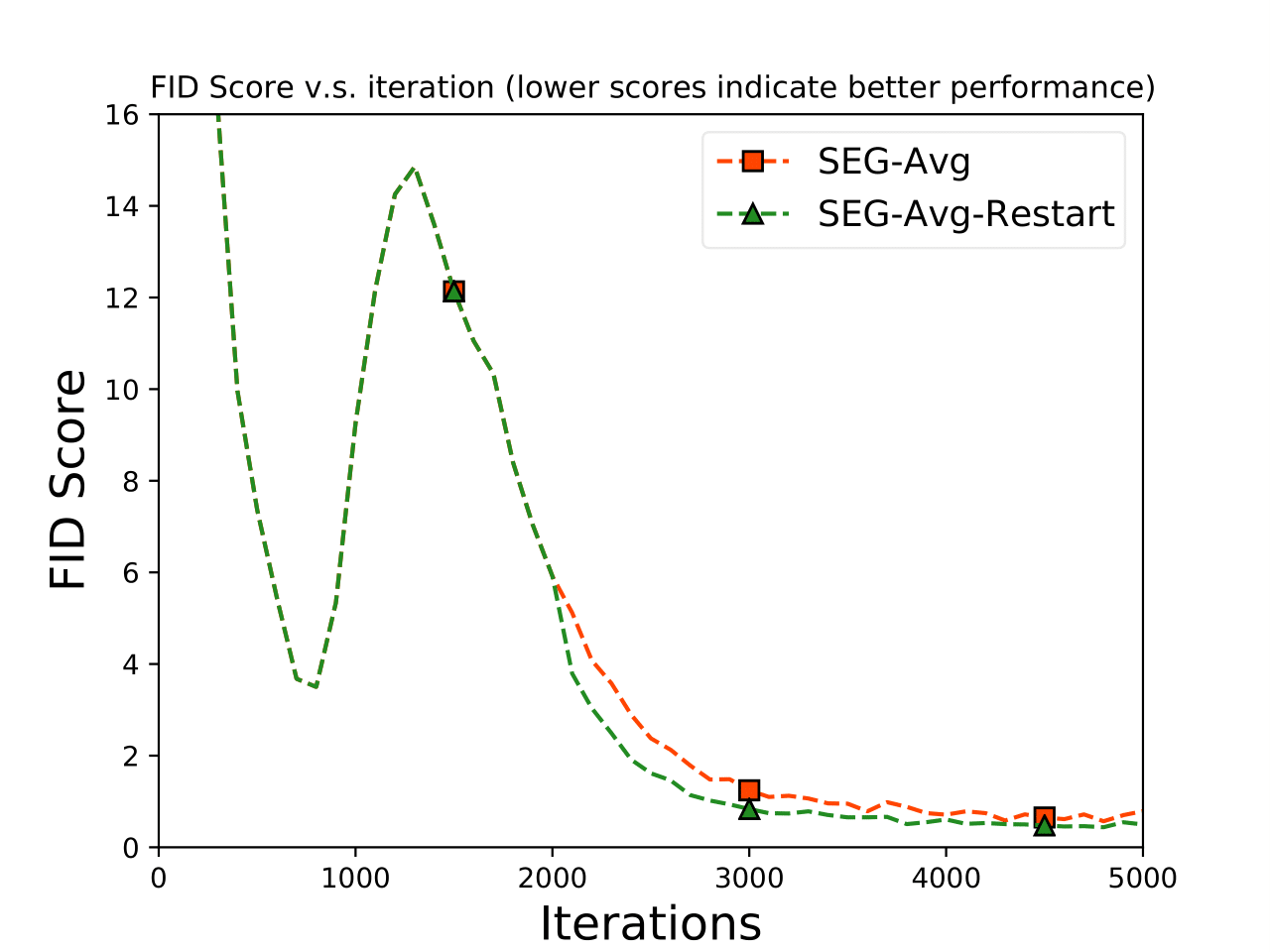}
}
\subfigure[SEG-Avg.]{
\includegraphics[width=0.28\textwidth]{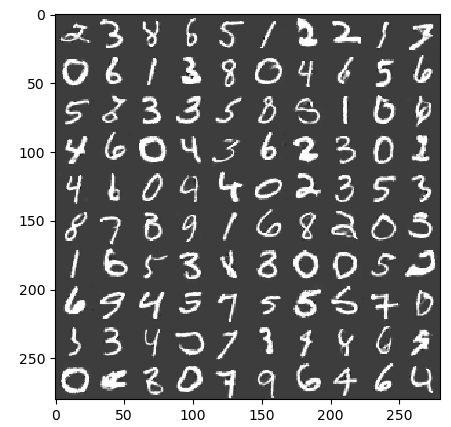}
}
\subfigure[SEG-Avg-Restart.]{
\includegraphics[width=0.28\textwidth]{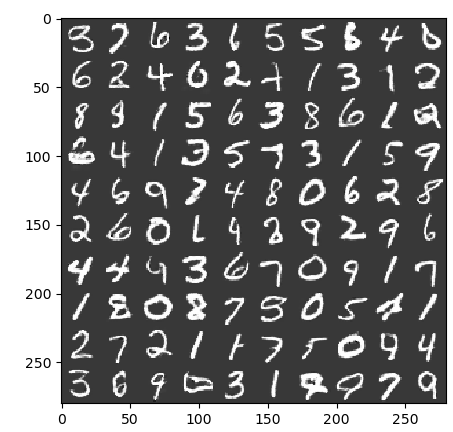}
}
\caption{GAN experimental results. (\textbf{a}). Comparing GAN performance (measure by FID score) of SEG-Avg and SEG-Avg-Restart.  (\textbf{b}). Images generated by SEG-Avg. (\textbf{c}). Images generated by SEG-Avg-Restart.  }
\label{fig:GAN}
\end{center}
\vspace{-0.3in}
\end{figure*}

\end{document}